\newcommand\blfootnote[1]{%
  \begingroup
  \renewcommand\thefootnote{}\footnote{#1}%
  \addtocounter{footnote}{-1}%
  \endgroup
}
\def\today{${\scriptscriptstyle\number\day-\number\month-\number\year}$}
\newcommand{\eqlb}[1]{\begin{equation}\label{#1}}
\newcommand{\eqe}{\end{equation}}
\newtheorem{theorem}{Theorem}[section]
\newtheorem{lemma}[theorem]{Lemma}
\theoremstyle{definition}
\newtheorem{definition}[theorem]{Definition}
\newtheorem{remark}[theorem]{Remark}
\newcommand{\rf}[1]{(\ref{#1})}
\newcommand{\lp}[3]{|#1|_{#2}^{#3}}
\newcommand{\hp}[3]{\|#1\|_{#2}^{#3}}
\newcommand{\fr}[2]{{#1 \over #2}}
\newcommand{\vph}[1]{v_{\varphi #1 }}
\renewcommand{\d}{\mathrm{d}}
\date{}
\def\m@th{\mathsurround=0pt}
\def\eqal#1{\null\,\vcenter{\openup\jot\m@th
\ialign{\strut\hfil$\displaystyle{##}$&&$\displaystyle{{}##}$\hfil
 \crcr#1\crcr}}\,}
\def\matrix#1{\null\,\vcenter{\normalbaselines\m@th
 \ialign{\hfil$##$\hfil&&\quad\hfil$##$\hfil\crcr
 \mathstrut\crcr\noalign{\kern-\baselineskip}
 #1\crcr\mathstrut\crcr\noalign{\kern-\baselineskip}}}\,}
\def\N{{\Bbb N}}
\def\R{{\Bbb R}}
\def\al{{\alpha}}
\def\dl{{\delta}}
\def\O{{\Omega}}
\def\o{{\omega}}
\def\sg{{\sigma}}
\def\G{{\Gamma}}
\def\Ot{{\Omega^t}}
\def\t{{\theta}}
\def\io{{\intop_\Omega}}
\def\iot{\intop_{\Omega^t}}
\def\dt{\frac{d}{dt}}
\def\ve{\varepsilon}
\def\vp{\varphi}
\def\ps{{\psi_1}}
\def\vphi{{v_\varphi}}
\def\fphi{{f_\varphi}}
\def \ll{\left}
\def\rr{\right}
\def\nb{\nabla}
\def\iy{\infty}
\def\pa{\partial}
\def\divv{{\rm div}}
\def\rot{{\rm rot}}
\def\swirl{{\rm swirl}}
\def\cd{\cdot}
\def\lb{\label}
\def\les{\leqslant}
\numberwithin{equation}{section}
\def\bye{\end{document}}
\title{The global estimate for regular
axially-symmetric solutions to the 
\linebreak Navier Stokes equations coupled
with \linebreak the heat conduction}
\author{Wies\l aw J. Grygierzec, Wojciech M. Zaj\c{a}czkowski}
\begin{document}
\input amssym.def
\input amssym.tex
\maketitle
\thispagestyle{fancy}

\blfootnote{
\noindent WG: Department of Statistics and Social Policy, University of Agriculture in Kraków, Al. Mickiewicza 21, 31-120 Kraków, Poland, email: wieslaw.grygierzec@urk.edu.pl}
\blfootnote{\noindent WZ: Institute of Mathematics (emeritus professor), Polish Academy of Sciences, \'Sniadeckich 8, 00-656 Warsaw, Poland, and Institute of Mathematics and Cryptology, Cybernetics Faculty, Military University of Technology, S. Kaliskiego 2, 00-908 Warsaw, Poland, e-mail: wz@impan.pl}

\begin{abstract}
The axially-symmetric solutions to the Navier-Stokes equations coupled with the heat conduction are considered in a bounded cylinder $\Omega \subset \mathbb{R}^3$. We assume that $v_r, \vphi, \omega_\vp$ vanish on the lateral part $S_1$ of the boundary $\pa \Omega$ and $v_z, \omega_{\varphi}, \partial_z v_{\varphi}$ vanish on the top and bottom of the cylinder, where we used standard cylindrical coordinates and $\omega=\rot v$ is the vorticity of the fluid. Moreover, vanishing of the heat flux through the boundary is imposed.\\
Assuming existence of a sufficiently regular solution we derive a global a priori estimate in terms of data. The estimate is such that a global regular solutions can be proved. We prove the estimate because some reduction of nonlinearity are found. Moreover, we need that $f(p)\equiv \hp{\vphi}{L_t^\iy L_x^p }{}/\hp{\vphi}{L_t^\iy L_x^\iy }{}$ is bounded from below by a positive constant. The quantity $f(p)$ is close to 1 for large $p$ because $f(\iy)=1$.
Moreover, deriving the global estimate for a local solution implies a possibility of its extension in time as long as the estimate holds.

\end{abstract}

\noindent
\noindent
{\bf MSC:} 35A01, 35B01, 35B65, 35Q30, 76D03, 76D05\\
Key words: Navier-Stokes equations, heat conduction, axially-symmetric solutions, cylindrical domain, existence of global regular solutions

\section{Introduction}\label{s1}

We are concerned with the 3d incompressible axially-symmetric Navier-Stokes equations coupled with the heat conduction. We derive a global a priori estimate for regular axially-symmetric solutions to the system  in  a cylindrical domain  
\begin{equation}
\begin{split}
\partial_{t}v+v\cdot\nabla v-\nu\Delta v+\nabla p&=\alpha(\theta)f, \\
\divv v&=0\quad {\rm in   } \; \Omega^T,
\end{split}
\label{1.1}
\end{equation}
and 
\begin{equation}\label{1.2}
    \partial_{t}\theta+v\cdot\nabla \theta-\kappa\Delta \theta=g\quad {\rm in   } \; \Omega^T,
\end{equation} 
where $\Omega^T=\Omega \times (0,T),\, T>0, \,v(x,t)\in\R^3$ denotes the velocity field, 
$p=p(x,t)\in\R$ denotes  the pressure function, $\theta=\theta(x,t)\in\R_+$ denotes the temperature, $f=f(x,t)\in\R^3$ denotes  the external force field, $g=g(x,t)$ denotes the heat sources, $\nu>0$ is the constant viscosity coefficient and $\kappa>0$ denotes the constant heat conductivity.
By $\Omega\subset\R^3$ we assume a finite cylinder
$$
\Omega=\{x\in\R^3\colon x_1^2+x_2^2<R^2,|x_3|<a\},
$$
and $a$, $R$ are given positive constants and  $x=(x_1,x_2,x_3)$ are Cartesian coordinates.
We note that
\[S:=\partial \Omega =S_1 \cup S_2,
\]
where
\[\begin{split}
&S_1=\{x\in\R^3\colon\sqrt{x_1^2+x_2^2}=R,x_3\in(-a,a)\},\\
&S_2=\{x\in\R^3\colon\sqrt{x_1^2+x^2}<R,x_3\in\{-a,a\}\},\\
\end{split}
\]
denote the lateral boundary and the top and bottom parts of the boundary, respectively.

In order to state the boundary conditions stating our main result we describe our problem in cylindrical coordinates $r,\varphi, z$ defined by 
\[
x_1=r\cos\varphi,\quad x_2=r\sin\varphi,\quad x_3=z.
\]
and we will use standard cylindrical unit vectors, so that, for example, \[ v=v_r\bar e_r+v_\varphi\bar e_\varphi+v_z\bar e_z \] where
$\bar e_r=(\cos\varphi,\sin\varphi,0),\ \ \bar e_\varphi=(-\sin\varphi,\cos\varphi,0),\ \ \bar e_z=(0,0,1)$.
We will denote partial derivatives  by using by using subscript comma notation, e.g.
\[v_{r,z}:=\partial_z v_r
\]
We assume the boundary conditions
\begin{equation}\label{1.3}
    \begin{aligned}
        &v_r =v_{\varphi}=\omega_{\varphi} =0 &{\rm   on   }&\;S_1 ^T=S_1\times(0,T),\\
        &v_z=\omega_{\varphi}=v_{\varphi ,z}=0 &{\rm on }&\;S_2^T=S_2 \times(0,T),\\
       &\bar n \cdot \nabla \theta =0 &{\rm   on   }&\;S^T=S\times(0,T)
   \end{aligned}
\end{equation}
where $\omega=\mathrm{curl}$ denotes the vorticity vector and we assume initial conditions
\begin{equation}\label{1.4}
    \begin{split}
        &v|_{t=0}=v(0)=v_0,\\
       &\t|_{t=0}= \theta(0)=\Theta_0,\\
    \end{split}
\end{equation}
where $v_0$ is given divergence free vector and $v_0, \theta _0$ satisfy boundary conditions \rf{1.3} These boundary conditions have appeared in the work of \linebreak Ladyzhenskaya\cite{L}.

We will denote the swirl by 
\[
u:=rv_{\varphi}.
\]
Note that 
\begin{equation}\begin{split} 
&\omega_r=-v_{\varphi,z}=-\fr{1}{r}u_{,z},\\
&\omega_\varphi=v_{r,z}-v_{z,r},\\
&\omega_z=\fr{1}{r}(rv_\varphi)_{,r}=v_{\varphi,r}+\fr{v_\varphi}{ r}=\fr{1}{r}u_{,r}.\\
\end{split}
\label{1.5}
\end{equation}
so that the boundary conditions \rf{1.3} imply 
\begin{equation}\label{1.6}
    \begin{aligned}
&\omega_r=v_{z,r}=u=0,\omega_z =v_{\varphi,r}&{\rm on}\; S_1^T,\\
&\omega_r=v_{r,z}=\omega_{z,z}=u_{,z}=0 &{\rm on}\;S_2^T
\end{aligned}
\end{equation}
The Navier-Stokes equations (\ref{1.1})  in cylindrical coordinates take the form 
\begin{equation}\begin{split}
v_{r,t}+v\cdot\nabla v_r-\fr{v_\varphi^2}{ r}-\nu\Delta v_r+\nu\fr{v_r} {r^2}&=-p_{,r}+\alpha(\theta)f_r,\\
v_{\varphi,t}+v\cdot\nabla v_\varphi+\fr{v_r}{r}v_\varphi-\nu\Delta v_\varphi+\nu\fr{v_\varphi}{r^2}&=\alpha(\theta)f_\varphi,\\
v_{z,t}+v\cdot\nabla v_z-\nu\Delta v_z&=-p_{,z}+\alpha(\theta)f_z,\\
(rv_r)_{,r}+(rv_z)_{,z}&=0\\
\end{split}
\label{1.7}
\end{equation}
where 
\begin{equation*}
v\cdot\nabla=(v_r\bar e_r+v_z\bar e_z)\cdot\nabla=v_r\partial_r+v_z\partial_z,
\;\Delta u=\fr{1}{r}(ru_{,r})_{,r}+u_{,zz}.
\end{equation*}
Using that $\o_r=-\vph{,z},\; \o_\vp=v_{r,z}-v_{z,r},\; \o_z=\fr1 r (r\vphi)_{,r}$
the  vorticity formulation becomes
\begin{equation}\label{1.8}
\begin{split}
&\omega_{r,t}+v\cdot\nabla\omega_r-\nu\Delta\omega_r+\nu{\omega_r\over r^2}=\omega_rv_{r,r}+\omega_zv_{r,z}-
\dot\alpha\theta_{,z}f_{\varphi}+\alpha F_r\\
&\omega_{\varphi,t}+v\cdot\nabla\omega_\varphi-{v_r\over r}\omega_\varphi-\nu\Delta\omega_\varphi+\nu{\omega_\varphi\over r^2}={2\over r}v_\varphi v_{\varphi,z}+\dot\alpha(\theta_{,z}f_r-\t_{,r}f_z)\\
& \hspace{10cm}+\alpha F_\varphi,\\
&\omega_{z,t}+v\cdot\nabla\omega_z-\nu\Delta\omega_z= \omega_rv_{z,r}+\omega_zv_{z,z}+\dot\alpha\fr1 r(r\t)_{,r}f_\vp+\alpha F_z,\\
\end{split}
\end{equation}
where $F:=\mathrm{curl}f$ and the swirl  equation is
\begin{equation}\label{1.9}
\begin{split}
       &u_{,t}+v\cdot \nabla u- \nu \Delta u +{2\nu\over r}u_{,r}=\alpha r f_\varphi \equiv\alpha f_0\\
       &u|_{t=0}=u_0=r\vphi(0).
\end{split}
 \end{equation}
We will use the notation
\begin{equation}\label{1.10}
    (\Phi,\Gamma)=({\omega_r \over r},{\omega_{\varphi}\over r}),
\end{equation}
and we note that $\Phi,\Gamma$ satisfy 
\begin{equation}\label{1.11}
\begin{split}
    \Phi_{,t}+v\cdot\nabla\Phi-\nu\bigg(\Delta+{2\over r}\partial_r\bigg)\Phi- (\omega_r\partial_r+\omega_z\partial_z){v_r\over r} \\=\dot \alpha \theta_{,z}\fphi/r+\alpha F_r/r\equiv -\dot \alpha \theta_{,z}\bar \fphi +\alpha \bar F_r,
\end{split}
\end{equation}
\begin{equation}\label{1.12}
    \begin{split}
        \Gamma_{,t}+v\cdot\nabla\Gamma-\nu\bigg(\Delta+{2\over r}\partial_r\bigg)\Gamma&+ 2{v_\varphi\over r}\Phi\\&=
        \dot \alpha (\theta _{,z}\bar f_r- \theta_{,r}\bar f_z)+\alpha \bar F_\varphi.
      \end{split}
\end{equation}
where $\bar f=f/r,\;\bar F=F/r$ and recall  \cite{CFZ},(1.6). Moreover by (\ref{1.3}),(\ref{1.4}), $\Gamma$ and $\Phi$ satisfy the boundary and initial conditions
\begin{equation}
    \label{1.13}
    \begin{split}
&\Phi=\Gamma=0 \quad \text{ on } S^T, \\
&  \Phi|_{t=0}=\Phi(0)\equiv \fr{\o _r(0)}r,\\
& \G|_{t=0}=\G(0)\equiv \fr{\o _\vp (0)}r.
\end{split}
\end{equation}
Recall that (\ref{1.7})$_4$ implies existence of the stream function $\Psi$ which solves the problem 
\begin{equation}\label{1.14}
    \begin{split}
        -\Delta \psi + {\psi \over {r^2}}&=\omega_{\varphi},\\
        \psi|_S&=0.
    \end{split}
\end{equation}
Then $v$ can be expressed in terms of the stream function,
\begin{equation}\lb{1.15}
    \begin{aligned}
        v_r&=-\psi_{,z},&v_z&={1 \over r}\bigg(r\psi\bigg)_{,r}=\psi_{,r}+{\psi \over r},\\
        v_{r,r}&=-\psi_{,zr},& v_{,zz}&=\psi_{,rz}+{\psi_{,z}\over r},\\
        v_{r,z}&=-\psi_{,zz},&v_{z,r}&=\psi_{,rr}+{1\over r}\psi_{,r}-{\psi \over r^2}.
    \end{aligned}
\end{equation}
We will also use the modified stream function
\begin{equation}\label{1.16}
    \psi_1:={\psi \over r},
\end{equation}
which satisfies
\begin{equation}\label{1.17}
    \begin{split}
        -\Delta \psi_1 -{2 \over r }\psi_{1,r}&=\Gamma,\\
         \psi_1|_S &=0,    \\
        \end{split}
\end{equation}

and we express $v$ in terms of $\psi_1$  by
\begin{equation}\begin{aligned}
&v_r=-r\psi_{1,z}, &v_z&=(r\psi_1)_{,r}+\psi_1=r\psi_{1,r}+2\psi_1,\\
&v_{r,r}=-\psi_{1,z}-r\psi_{1,rz}, &v_{z,r}&=3\psi_{1,r}+r\psi_{1,rr},\\
&v_{r,z}=-r\psi_{1,zz}, &v_{z,z}&=r\psi_{1,rz}+2\psi_{1,z}.\\
\end{aligned}
\label{1.18}
\end{equation}
Projecting (\ref{1.17})$_1$ on $S_2 $ gives $\psi_{1,zz}=-\Gamma$ on $S_2$ and recalling that $\Gamma|_{S_2}=0$ by \rf{1.2} we obtain
\begin{equation}\label{1.19}
    \psi_{1,zz}=0\quad \text{on  }S_2.
\end{equation}
We have to emphasize that all estimates in this paper are derived for regular solutions. This meas that that smooth $v$ and $\psi$ admits the following expansions near the axis
\begin{gather}
    v_r(r,z,t)=a_1(z,t)r+a_2(z,t)r^3+\ldots,\label{1.20}\\
     v_\varphi(r,z,t)=b_1(z,t)r+b_2(z,t)r^3+\ldots,\label{1.21}\\
       \psi(r,z,t)=d_1(z,t)r+d_2(z,t)r^3+\ldots,\label{1.22}
\end{gather}
   In particular
\begin{align}
&\psi_1(r,z,t)=d_1(z,t)+d_2(z,t)r^2+\ldots ,\label{1.23}\\
&\psi_{1,r}(r,z,t)=2d_2(z,t)r+\ldots ,\label{1.24}
\end{align}
which was shown by Liu\&Wang \cite{LW}. To show (\ref{1.20})-(\ref{1.24}) it suffices that $v,\psi\in W_2^{3,3/2}(\Omega^T)$. We show in Section 7 that this is true as long as the quantity 
\begin{equation}
    X(t):=\|\Phi\|_{V(\Omega^t)}+\|\Gamma\|_{V(\Omega^t)},
\end{equation}
where  
\[\|w\|_{V(\Omega^t)}:=|w|_{2,\infty,\Omega^t}+|\nabla w|_{2,\Omega^t},\]
remains bounded.

\begin{theorem}\lb{th1.1} (a priori estimate)
\begin{enumerate}
\item Suppose that $v, \theta$ is a smooth solution to problem (1.1)-(1.4).
\item Suppose that quantities $D_{0}, \ldots, D_{12}, B_{1}$, defined in Section 2.4 , are finite for any $t \in \mathbb{R}_{+}$.
\item Suppose that there exists a positive constant $c_{0}$ such that
$$
 \fr{\lp{\vphi}{d,\iy,\Ot}{}}{\lp{\vphi}{\iy,\Ot}{}}\geqslant c_{0}
$$

\end{enumerate}

for $d \geqslant 3$.

Then there exists an increasing positive function $\phi$ such that 
\begin{equation}\lb{1.26}
    X(t) \leq \phi\left(D_{1}, \cdots, D_{12}, B_{1}\right)
\end{equation}
\end{theorem}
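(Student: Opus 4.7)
The plan is to derive $X(t)$-control by a hierarchy of estimates, passing from the most basic conservation laws up to the vorticity/stream-function level, closing through Gr\"onwall at the end.

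\emph{Step 1: energy estimates.} I would first multiply \rf{1.1}$_1$ by $v$ and integrate to get the basic $L^\infty_t L^2_x \cap L^2_t H^1_x$ bound for $v$ with forcing $\alpha(\theta)f$ (using the boundary conditions \rf{1.3} to kill the boundary terms). A parallel estimate for \rf{1.2}, plus a maximum-principle-type bound for $\theta$ via $\bar n \cdot \nabla\theta=0$, gives the analogous control of $\theta$. Next, since the swirl $u = r v_\varphi$ satisfies \rf{1.9} with boundary conditions implied by \rf{1.6}, a weighted maximum principle in suitable Lebesgue spaces (testing by $u|u|^{p-2}$ and using the good sign of the $(2\nu/r) u_{,r}$ term) bounds $u$, hence at least a weighted $L^\infty$ control of $v_\varphi$. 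I would also derive an $L^\infty_t L^d_x$ estimate for $v_\varphi$ from the azimuthal equation \rf{1.7}$_2$ (where $\nu v_\varphi /r^2$ has the correct sign) using standard parabolic $L^p$ theory, absorbing the quadratic term $v_r v_\varphi/r$ via the $v$-energy and the swirl bound.

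\emph{Step 2: using hypothesis (3) to turn the $L^d$ bound into an $L^\infty$ bound.} This is where the non-concentration assumption
$$
\lp{\vphi}{d,\iy,\Ot}{} \;\geq\; c_0\,\lp{\vphi}{\iy,\Ot}{}
$$
comes in: the $L^\infty_t L^d_x$ control of Step~1 yields $\lp{\vphi}{\iy,\Ot}{} \leq c_0^{-1}\lp{\vphi}{d,\iy,\Ot}{}$, bounded in terms of data. Combined with the swirl bound, this also controls $v_\varphi/r$ in the averaged norms needed below (using the expansion \rf{1.21} for smoothness near the axis).

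\emph{Step 3: testing the $\Phi,\Gamma$ system.} I would multiply \rf{1.11} by $\Phi$ and \rf{1.12} by $\Gamma$, integrate over $\Omega^t$, and use the boundary conditions \rf{1.13}. The parabolic part (with the helpful $(2/r)\partial_r$ term) gives the $V(\Omega^t)$-norm. The dangerous terms are: (a) the coupling $\int 2(v_\varphi/r)\Phi\Gamma$ in the $\Gamma$ equation, which I would estimate by H\"older as $\lp{\vphi}{\iy,\Ot}{}\cdot\|\Phi\|_{V}\|\Gamma\|_{V}$ using Step~2 and Hardy inequalities; and (b) the vortex-stretching $\int(\omega_r \partial_r + \omega_z \partial_z)(v_r/r)\,\Phi$. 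For (b) I would rewrite $v_r/r = -\psi_{1,z}$ via \rf{1.18}, recognise $\omega_r = r\Phi$, $\omega_z = u_{,r}/r$, and exploit elliptic regularity of \rf{1.17}: the $W^{2,p}$-theory for $\psi_1$ gives $\nabla^2 \psi_1 \in L^p$ in terms of $\|\Gamma\|_{L^p}$, which together with interpolation in $V(\Omega^t)$ yields an estimate subcritical in $X(t)$. The heat source terms $\dot\alpha \theta_{,z}\bar\fphi$ etc. are absorbed directly into the data constants $D_i,B_1$.

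\emph{Step 4: closing.} Summing the $\Phi,\Gamma$ estimates and combining with Steps~1--2, I expect to obtain a differential inequality of the form $\dt X(t)^2 + \nu \|\nabla(\Phi,\Gamma)\|_2^2 \leq \phi(D_\bullet,B_1)\bigl(1+X(t)^2\bigr) + \text{small}\cdot X(t)^2$, to which Gr\"onwall gives \rf{1.26}. \emph{The main obstacle} I anticipate is the vortex-stretching term in Step~3(b): making it subcritical requires that all occurrences of $v_\varphi/r$ and $v_r/r$ be replaced (via $u$, $\psi_1$, and the axis expansions \rf{1.20}--\rf{1.24}) by quantities bounded through Steps~1--2, and the $L^\infty$-control of $v_\varphi$ delivered by hypothesis~(3) is what makes the coefficient of the remaining quadratic-in-$X$ term small enough to be absorbed into the dissipation rather than summed by Gr\"onwall.
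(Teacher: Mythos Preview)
Your proposal has a genuine gap in Step~3(a) and Step~4 that prevents closing. Estimating the coupling term $\int (v_\varphi/r)\Phi\Gamma$ by $|v_\varphi|_{\infty,\Omega^t}\cdot\|\Phi\|_V\|\Gamma\|_V$ yields a contribution of order $C\cdot X^2$ on the right-hand side with a coefficient $C$ that is merely \emph{bounded}, not small; such a term cannot be absorbed into the dissipation, and Gr\"onwall then gives only $X(t)\le X(0)e^{Ct}$, not a data-only bound. The paper avoids this by a more delicate splitting: it writes $|v_\varphi| = |rv_\varphi|^{1-\varepsilon}|v_\varphi|^\varepsilon r^{-(1-\varepsilon)}$, uses the swirl bound $|rv_\varphi|\le D_2$ together with the weighted Hardy--Sobolev interpolation (Lemma~\ref{lemma 2.9}) on $\Phi/r^{1-\varepsilon_1}$, and obtains
\[
\Big|\int_{\Omega^t}\frac{v_\varphi}{r}\Phi\Gamma\Big|\le c\,D_2^{1-\varepsilon}|v_\varphi|_{d,\infty,\Omega^t}^{\varepsilon}\,|\Phi|_{2,\Omega^t}^{\theta_0}\,\|\Phi\|_V^{1-\theta_0}\|\Gamma\|_V,
\]
with $\theta_0>0$. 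This is genuinely subcritical: the total $X$-power is $2-\theta_0<2$, and the closing is \emph{algebraic} ($X^2\le \phi\,X^{2-\delta}+\phi$), not via Gr\"onwall.

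Your use of hypothesis~(3) is also inverted relative to what is actually achievable. You propose to first obtain a data-only bound on $|v_\varphi|_{d,\infty,\Omega^t}$ by ``standard parabolic $L^p$ theory'' on \rf{1.7}$_2$ and then convert it to an $L^\infty$ bound via the hypothesis. But the term $(v_r/r)v_\varphi$ in \rf{1.7}$_2$ is precisely what blocks a direct $L^d$ estimate: after testing with $v_\varphi|v_\varphi|^{s-2}$ one is left with $\int (v_r^2/r^2)|v_\varphi|^{s-2}$, which can only be controlled by $|v_\varphi|_\infty^{s-2}\int v_r^2/r^2$, and it is exactly hypothesis~(3) that converts the resulting factor $|v_\varphi|_\infty^{s-2}/|v_\varphi|_{s}^{s-2}$ into $c_0^{-(s-2)}$ (this is Lemma~\ref{lemma 6.3}). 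The paper never obtains a data-only $L^\infty$ bound on $v_\varphi$; instead $|v_\varphi|_\infty$ is estimated by $X^{3/4}$ (Lemma~\ref{lemma 6.2}) and fed back into the subcritical inequality with a small exponent $\varepsilon_0$. An additional auxiliary estimate $|\Phi|_{2,\Omega^t}^2\lesssim \phi\cdot X+\phi$ (Lemma~\ref{lemma 6.1}), requiring the $H^3$ stream-function bounds of Section~\ref{s3} and the $\nabla u$ estimates of Section~\ref{s5}, is needed to handle the $|\Phi|_{2,\Omega^t}^{\theta_0}$ factor. None of these ingredients appears in your outline, and without the subcritical exponent the argument does not close.
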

\begin{proof}
In Lemma \ref{2.2} is proved the existence of positive constants $\theta_{*}, \theta^{*}, $\linebreak $\theta_{*}<\theta^{*}$ such that $\theta_{*} \leq \theta(t) \leq \theta^{*}$ for any $t \in \mathbb{R}_{+}$.  In Lemma \ref{2.3} the following energy estimate 

\[
\|\t\|_{V(\Ot)} \leq D_{0},\; t \in \mathbb{R}_{+}
\]
is proved.  Lemma \ref{lemma 2.4} yields the energy estimate for velocity $v$
\[
\hp{v}{V(\Ot)}{2} +\nu\iot\bigg({v_r^2 \over r^2}+{v_{\vp}^2 \over r^2}\bigg )dx dt' \leq D_{1}^{2} .
\]
The maximum principle for swirl $u=r\vphi$ is proved in Lemma \ref{lemma 2.5}
$$
|u|_{\infty, \Ot} \leq D_{2} .
$$
Lemma \ref{lemma 4.1} and Remark \ref{rm 4.2} imply
\begin{equation}\lb{1.27}
    \begin{aligned}
X^{2}(t)& \leqslant \phi_{1}\left(\theta_*, \theta^*, D_{1}, D_{2}, B_{1}, D_{3}, R\right) \\
& \cdot \ll[(1+\lp{\vphi}{\iy ,\Ot}{\ve_0})\lp{\vphi}{d, \iy ,\Ot}{\ve}\lp{\Phi}{2,\Ot}{\t _0} X^{2-\t_0}+1\rr],
\end{aligned}
\end{equation}

where 
\begin{align*}
    &\theta_{0}=\left(1-\frac{3}{d}\right) \varepsilon_{1}-\frac{3}{d} \varepsilon_{2},  
    &&1+\frac{\varepsilon_{2}}{\varepsilon_{1}}<\frac{d}{3},\\
    &\varepsilon_{1}\left(1-\frac{3}{d}\right)<1+\frac{3}{d} \varepsilon_{2}, 
   && d>3 
\end{align*}
and $\varepsilon_{0}$ is as small as we need.
Lemma \ref{lemma 6.1} gives
\begin{equation}\lb{1.28}
    |\Phi|_{2, \Ot}^{2} \leqslant \phi_{2}\left(D_{1}, D_{2}, D_{4}, D_{5}\right)\left(1+\lp{\vphi}{\iy,\Ot}{2\ve_0} \right) X\\
+\phi_{3}\left(D_{0,} D_{10}\right).
\end{equation}

To prove the inequality we need $\mathrm{H}^{2}-\mathrm{H}^{3}$ estimates for the modified stream function $\psi_{1}$ proved in Section 3. To prove the inequalities in Section 3 we need Liu-Wang expansions \rf{1.20}-\rf{1.22}. Moreover, we need also the energy estimates for $\nb u$ proved in Section 5 in the form
$$
\|\nb u\|_{V(\Ot)} \leqslant \phi\left(D_{4}, D_{5} \right) .
$$
Lemma \ref{lemma 6.2} implies
\begin{equation}\lb{1.29}
    \lp{\vphi}{\iy,\Ot}{}\le \Phi_4(D_1,D_2)X^{3/4}+D_{11}.
\end{equation}
Finally, Lemma \ref{lemma 6.3} yields
\begin{equation}\lb{1.30}
    \lp{\vphi}{d,\iy,\Ot}{}  \leq \phi_{5}(D_{1}, D_{2}, c_{0}, D_{12}),
\end{equation}
where conditions \rf{6.20} are used.
To prove \rf{1.30} we need  Assumption 3. Using \rf{1.28}-\rf{1.30}  in \rf{1.27} yields
\begin{equation}\lb{1.31}
    X^2(t)\leq \phi(D_0,\ldots D_{12},B_1,R,C_0)\ll[  X^{3\ve_0+2-\t_0/2}+1 \rr].
\end{equation}
Since
$$
3 \varepsilon_{0}+2-\fr{\theta_0} 2<2
$$
estimate \rf{1.26} holds. This ends the proof.
\end{proof}
\begin{remark}\label{rm1.2}
 We note that condition from Assumption 3 from Theorem 1.1 can be justified assuming sufficient regularity of $v_{\varphi}$ on (0,T). Namely, let
$$
f(x, t)=\frac{\left|v_{\varphi}(x, t)\right|}{\lp{\vphi(t)}{\iy,\O}{}} .
$$
Then $|f|_{\iy, \Omega}=1$ for any $t \in(0, T)$.
Suppose that $f \in C^{\alpha, \alpha / 2}\left(\Omega^{t}\right)$ for some $\alpha \in(0,1)$. Then for every $\varepsilon>0$ there exists a set $A \subset \Omega$ of positive measure $|A|$ such that
$$
f(x, t) \geqslant 1-\varepsilon \text { for } x \in A,\; t \in(0, \mathrm{~T}) \text {. }
$$

This gives $|f|_{d, \Omega}^{d} \geqslant \int_{A}|f(x, t)|^{d} d x \geqslant|A|(1-\varepsilon)^{d}$. Hence 
$$|f|_{d, \Omega} \geqslant|A|^{1 / d}(1-\varepsilon)$$
and
$$
\sup _{t}|f|_{d, \Omega} \geqslant|f|_{d, \Omega} \geqslant|A|^{1 / d}(1-\varepsilon),
$$
from which the Assumption 3 of Theorem \ref{th1.1} holds.
\end{remark}
\begin{theorem}\label{th1.3}
Suppose that the assumptions of Theorem 1.1 holds. Suppose that $f, g \in W_{2}^{2,1} (\Ot),\; v_{0}, \theta_{0} \in H^{3}(\O)$.

Then for smooth solutions to problem \rf{1.1} - \rf{1.4} the following estimate holds

\begin{multline}\lb{1.32}
 \|v\|_{W_{2}^{4,2}\left(\Omega^{t}\right)}+\|\theta\|_{W_{2}^{4,2}\left(\Omega^{t}\right)} \\
 \leq \phi(D_{0}, \cdots, D_{12}, B_{1},\|f\|_{W_{2}^{2,1} (\Ot)},\|g\|_{W_{2}^{2,1} (\Ot )} ,\|v_0\|_{H^3(\Omega)},\|\t_0\|_{H^3(\O)}).
\end{multline}
\end{theorem}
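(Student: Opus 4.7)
The plan is a bootstrap argument that takes the a priori bound produced by Theorem \ref{th1.1} as its starting point and lifts it, step by step, up to the $W_2^{4,2}$ level using parabolic regularity. First I would invoke Theorem \ref{th1.1} to obtain $X(t)=\|\Phi\|_{V(\Ot)}+\|\Gamma\|_{V(\Ot)}\le \phi$, which since $\omega_r=r\Phi$ and $\omega_\vp=r\G$ immediately controls $\omega_r,\omega_\vp$ in $V(\Ot)$. Combining this with the estimate $\|\nb u\|_{V(\Ot)}\le \phi(D_4,D_5)$ from Section 5 (recalling $\o_z=u_{,r}/r$) gives the whole vorticity vector $\o$ in $L_\iy(0,t;L_2)\cap L_2(0,t;H^1)$. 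Solving the Biot--Savart-type elliptic system coming from $\o=\rot v$, $\divv v=0$ with boundary conditions \rf{1.3}, one then gets $v\in L_\iy(0,t;H^1)\cap L_2(0,t;H^2)$.

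Next I would insert this regularity of $v$ into the heat equation \rf{1.2}, treat $v\cdot\nb\t$ as a forcing term, and apply the $W_2^{2,1}$ parabolic estimate with Neumann boundary conditions (using $g\in W_2^{2,1}(\Ot)$ and $\theta_0\in H^3(\O)$) to obtain $\theta\in W_2^{2,1}(\Ot)$, with the corresponding bound depending on $D_0,\ldots,D_{12},B_1,\|g\|_{W_2^{2,1}(\Ot)},\|\t_0\|_{H^3(\O)}$. Having $\theta$ in $W_2^{2,1}$ and hence $\alpha(\theta)f$ in $W_2^{2,1}$, I would then apply the linear Stokes $W_2^{2,1}$ theory to \rf{1.1}, absorbing $v\cdot\nb v$ as a lower-order term (Sobolev embedding: $v\in L_\iy(0,t;H^1)\cap L_2(0,t;H^2)$ and $\nb v\in L_2(0,t;H^1)$ are enough to place $v\cdot\nb v$ in $L_2(\Ot)$), obtaining $v\in W_2^{2,1}(\Ot)$ together with a pressure estimate.

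Once both $v$ and $\t$ lie in $W_2^{2,1}(\Ot)$, Sobolev embeddings make all the nonlinear terms $v\cdot\nb v$, $v\cdot\nb\t$, $\al(\t)f$ members of the right-hand-side class needed for the next bootstrap. I would differentiate the systems once in time (or, equivalently, apply the $W_2^{4,2}$ parabolic estimate directly), compute the compatibility conditions at $t=0$ from $v_0,\t_0\in H^3(\O)$ satisfying \rf{1.3}, and conclude $\t\in W_2^{4,2}(\Ot)$; then feeding this stronger $\t$ into the Navier-Stokes block gives $v\in W_2^{4,2}(\Ot)$ as well. The resulting bound then depends only on the quantities listed in \rf{1.32}.

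The main obstacle is the intermediate step where $v\cdot\nb v$ and $v\cdot\nb\t$ must be shown to have the right regularity to feed the next parabolic estimate, particularly because the domain is a cylinder and the boundary conditions \rf{1.3} are mixed (Dirichlet/Navier-slip type on $S_1$, free-slip on $S_2$, Neumann on $\t$); one must use the axial symmetry and the compatibility of $v_0,\t_0$ with these conditions carefully so that the $W_p^{2,1}$ and $W_2^{4,2}$ linear theory is applicable up to the corner $\bar S_1\cap\bar S_2$. A second delicate point is controlling the pressure $p$ and its time derivative at the $W_2^{4,2}$ stage, which requires reproducing the standard Solonnikov-type estimates for the Stokes problem in the cylinder with the given mixed boundary data, but all of this is by now classical provided the nonlinear inputs have been shown to belong to $W_2^{2,1}(\Ot)$.
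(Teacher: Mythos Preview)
Your overall bootstrap philosophy matches the paper's, and your route to $v\in W_2^{2,1}(\Omega^t)$ is a legitimate alternative (the paper instead reads off $v'=(v_r,v_z)\in L_\infty(0,t;L_6)$ directly from the stream-function estimate $\psi_1\in L_\infty(0,t;H^2)$ of Lemma~\ref{l3.1} and then starts the bootstrap at $W^{2,1}_{3/2,2}$). One small inaccuracy: Section~\ref{s5} controls $\nabla u$, not $\omega_z=u_{,r}/r$; what actually gives $\omega_z\in V(\Omega^t)$ once $X$ is bounded is Lemma~\ref{lemma 6.1}.

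The real gap is the jump from $W_2^{2,1}$ to $W_2^{4,2}$. Applying the $W_2^{4,2}$ estimate (or differentiating once in time and applying $W_2^{2,1}$ to $v_t$) requires $v\cdot\nabla v\in W_2^{2,1}(\Omega^t)$, and this does \emph{not} follow from $v\in W_2^{2,1}(\Omega^t)$ in three space dimensions. Indeed $v\in W_2^{2,1}$ gives only $v\in L_\infty(0,t;L_6)$, $\nabla v\in L_\infty(0,t;L_2)\cap L_2(0,t;L_6)$, $\nabla^2 v,\,v_t\in L_2(\Omega^t)$; hence $\nabla(v\cdot\nabla v)$, which contains $\nabla v\otimes\nabla v$ and $v\cdot\nabla^2 v$, lands only in $L_2(0,t;L_{3/2})$, and $\partial_t(v\cdot\nabla v)$, which contains $v_t\cdot\nabla v$, is no better than $L_{5/4}(\Omega^t)$. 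So $v\cdot\nabla v$ is not even in $W_2^{1,1/2}$, and your sentence ``Sobolev embeddings make all the nonlinear terms \dots\ members of the right-hand-side class needed for the next bootstrap'' is exactly where the argument breaks.

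This is precisely the obstruction the paper works around: before attempting any higher-order estimate it first raises the space integrability through the chain
\[
W^{2,1}_{3/2,2}\ \to\ W^{2,1}_{30/17,5/2}\ \to\ W^{2,1}_{15/7,10/3}\ \to\ W^{2,1}_{5'}\qquad(5'<5),
\]
each step using the Maremonti--Solonnikov mixed-norm Stokes estimate (Lemma~\ref{lemma 7.8}) together with the anisotropic embeddings of Lemma~\ref{lemma 7.3}. At the $W^{2,1}_{5'}$ stage one finally has $v,\theta\in L_\infty(\Omega^t)$ and $\nabla v,\nabla\theta\in L_q(\Omega^t)$ for every finite $q$, which is what is needed to place $\nabla(v'\cdot\nabla v)$ and $\partial_t^{1/2}(v'\cdot\nabla v)$ in $L_{5'}$ and pass to $W^{3,3/2}_{10/3}$, and only then to $W_2^{4,2}$. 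Your proposal needs this integrability-raising phase (or an equivalent device) inserted between $W_2^{2,1}$ and $W_2^{4,2}$.
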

\begin{proof}
    (see Section 7).
\end{proof}

\section{Preliminaries}\label{s2}
\subsection{Notations}
We use the following notations for the Lebesgue and Sobolev spaces
\[
\begin{split}
  &  \|u\|_{L_p(\Omega)}=|u|_{p,\Omega},\quad\|u\|_{L_p(\Omega^t)}=|u|_{p,\Omega^t},\\
    & \|u\|_{L_{p,q}(\Ot)}=\|u\|_{L_q(0,t;L_p(\Omega)}=|u|_{p,q,\Omega^t},\;p,q\in[1,\infty],
\end{split}
\]
Let $W^s_p(\Omega), s\in \N, \Omega\subset \R^3$ be the Sobolev space with the finite norm 
\[
\|u\|_{W^s_p(\Omega)}=\bigg(\sum _{|\alpha|\le s}\intop _\Omega |D^\alpha _x u(x)|^p dx\bigg)^{1/p},
\]
where 
\[\begin{split}
&D_x^\alpha =\partial^{\alpha_1}_{x_1}\,\partial^{\alpha_2}_{x_2}\,\partial^{\alpha_3}_{x_3},\;|\al|=\al_1+\al_2+\al_3, \\ 
&\alpha_i\in\N_0=\N\cup\{0\}, i=1,2,3,\;p\in[1,\infty] 
\end{split}
\]
We set $H^s(\Omega)=W^s_2(\Omega)$ and
\[
\begin{aligned}
    \|u\|_{H^s(\Omega)}&=\|u\|_{s,\Omega},&\|u\|_{W^s_p(\Omega)}&=\|u\|_{s,p,\Omega},\\
    \|u\|_{L_q(0,t;W^s_p(\Omega)}&=\|u\|_{s,p,q,\Omega^t},&\|u\|_{s,p,p,\Omega^t}&=\|u\|_{s,p,\Omega ^t},
    \end{aligned}
\]
where $s\in\N\cup\{0\},\;p,q\in[1,\infty]$.
We need the energy type space $V(\Omega^t)$ appropriate for description of weak solutions to the Navier-Stokes equations and the heat equation
\[
\|u\|_{V(\Omega^t)}=|u|_{2,\infty,\Omega_t}+|\nabla u|_{2,\Omega_t}.
\]
\subsection{Basic estimates}
\begin{lemma}\label{lemma 2.1}
    Let $\theta$ be a solution to (\ref{1.2}). Assume that there exist a positive constants $\theta_*$ , $0<\theta_* \le \theta(0)$. Assume also that $g\ge0$.
    Then solutions to (\ref{1.2}) satisfy
    \begin{equation}\label{2.1}
        \theta(t)\ge \theta_*
    \end{equation}
\end{lemma}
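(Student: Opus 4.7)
The plan is to run a standard weak-minimum-principle/energy argument: compare $\theta$ with the constant barrier $\theta_*$ and show that the negative part of the deviation stays zero.

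First I would set $\eta := (\theta_* - \theta)_+ = \max(\theta_* - \theta, 0) \geq 0$ so that the hypothesis $\theta(0) \geq \theta_*$ gives $\eta|_{t=0} \equiv 0$. On the open set $\{\theta < \theta_*\}$ we have $\eta = \theta_* - \theta$, $\nabla\eta = -\nabla\theta$, $\partial_t\eta = -\partial_t\theta$, and on $\{\theta \geq \theta_*\}$ we have $\eta \equiv 0$ so its derivatives vanish. Thus $\eta\,\partial_t\theta = -\eta\,\partial_t\eta = -\tfrac12 \partial_t \eta^2$ pointwise, and similarly $\eta\,\nabla\theta = -\tfrac12 \nabla\eta^2$ and $\eta\,\Delta\theta = -\eta\,\Delta\eta$ (the latter only on $\{\eta>0\}$, which is enough after integration by parts because $\eta$ and $\nabla\eta$ vanish on the complement).

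Next I would take equation \rf{1.2}, multiply by $-\eta$, and integrate over $\Omega$. This yields
\begin{equation*}
\tfrac{1}{2}\dt \io \eta^2\,\d x + \tfrac{1}{2}\io v\cdot \nabla \eta^2\,\d x + \kappa \io |\nabla \eta|^2\,\d x - \kappa \intop_S \eta\,\bar n\cdot\nabla\eta\,\d S = -\io \eta g\,\d x.
\end{equation*}
The boundary integral vanishes because $\bar n\cdot\nabla\theta=0$ on $S$ by \rf{1.3}, and therefore $\bar n\cdot\nabla\eta=0$ on $\{\eta>0\}\cap S$. For the convective term I would integrate by parts using $\divv v=0$ together with $v\cdot\bar n=0$ on $S$ (which follows from $v_r=0$ on $S_1$ and $v_z=0$ on $S_2$ in \rf{1.3}), so that $\io v\cdot\nabla \eta^2\,\d x = 0$. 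Since $g\geq 0$ and $\eta\geq 0$, the right-hand side is nonpositive, leaving
\begin{equation*}
\tfrac{1}{2}\dt \io \eta^2\,\d x + \kappa \io |\nabla \eta|^2\,\d x \leq 0.
\end{equation*}
Integrating in time and using $\eta(0)\equiv 0$ gives $\io \eta^2(t)\,\d x \leq 0$, hence $\eta\equiv 0$, i.e. $\theta(t)\geq \theta_*$.

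The only subtle point is the justification of the chain-rule manipulations for the truncation $\eta=(\theta_*-\theta)_+$; for the smooth solutions assumed in the statement this is standard (one may approximate $(\cdot)_+$ by a smooth convex $C^1$ function $\chi_\varepsilon$ with $\chi_\varepsilon'\geq 0$, write the identity with $\chi_\varepsilon(\theta_*-\theta)$ in place of $\eta^2/2$, and pass to the limit $\varepsilon\to 0$). Apart from this technicality the argument is routine; the key structural input is that the boundary conditions on $v$ and on $\theta$ together with $\divv v=0$ eliminate both the boundary term from the Laplacian and the convective term, so that the sign of $g$ drives the minimum principle.
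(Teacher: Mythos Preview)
Your argument is correct and is essentially the same as the paper's: the paper multiplies \rf{1.2} by $(\theta-\theta_*)_- = -(\theta_*-\theta)_+ = -\eta$, which is exactly your test function, and then disposes of the convective term via $v\cdot\bar n|_S=0$ and of the source term via $g\ge 0$, obtaining $\dt|(\theta-\theta_*)_-|_{2,\Omega}^2\le 0$. Your write-up is in fact slightly more careful, since you also explain why the Neumann boundary term from $\Delta\theta$ vanishes and comment on the chain rule for the truncation.
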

\begin{proof}
    Multipply (\ref{1.2}) by $(\theta-\theta_*)_-=\min\{\theta-\theta_*,0\}$ and integrate over $\Omega$. Using boundary conditions yields 
    \begin{equation}\label{2.2}
        \begin{split}
            \fr12\dt &\intop_\Omega(\theta-\theta_*)_-^2dx+\kappa \intop_\Omega |\nabla (\theta-\theta_*)|^2dx \\
            &=-\intop_\Omega v\cdot \nabla (\theta-\theta_*)_-(\theta-\theta_*)_-dx+\intop_\Omega g(\theta-\theta_*)_-dx.
        \end{split}
    \end{equation}  
    The first term on the r.h.s. vanishes because it equals
    \[
    -{1 \over 2}\intop_\Omega v\cdot \nabla (\theta-\theta_*)_-^2 dx=-{1 \over 2}\intop_Sv\cdot \bar n(\theta-\theta_*)_-^2dS=0,
    \]
    where $v\cdot \bar n|_S=0$. Since $g\ge 0$ the last term on the r.h.s. of (\ref{2.2}) is less than or equal to zero.
    Then (\ref{2.2}) yields 
    \[
    {d \over dt}|(\theta-\theta*)_-|^2_{2,\Omega}\le 0
    \]
    so
    \[
    |(\theta(t)-\theta*)_-|_{2,\Omega}\le|(\theta(0)-\theta*)_-|_{2,\Omega}
    \]
    Hence, $\theta(0)\ge\theta_*$ implies (\ref{2.1}). This ends the proof.
\end{proof}
\begin{lemma}\label{lemma 2.2}
    Let $\theta$ be a solution to (1.2), Let $\theta_* $ positive constant such that  $\theta_*\le\theta(0), $ and let $g \ge 0,\,\divv=0,\,v\cdot \bar n|_S=0$. 

Then  there exists $\t^*\ge 0$ , such that solutions to (\ref{1.2}) satisfy
    \begin{equation}\label{2.3}
        \theta_*\le\theta(t)\le\theta^*.
    \end{equation}
    and $ \t ^*=|g|_{\infty,1,\Omega}+|\theta(0)|_{\infty,\Omega} .$
\end{lemma}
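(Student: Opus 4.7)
The lower bound $\theta(t)\geq \theta_*$ is already established in Lemma~\ref{lemma 2.1}, so only the upper bound requires a new argument. The plan is to reduce the upper bound to a statement analogous to Lemma~\ref{lemma 2.1} by subtracting off the space-uniform contribution of the source term.

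Concretely, I would introduce the comparison function
\[
w(x,t):=\theta(x,t)-\intop_0^t|g(\cdot,s)|_{\infty,\Omega}\,\d s,
\]
and set $M:=|\theta(0)|_{\infty,\Omega}$ so that $\theta^*=M+|g|_{\infty,1,\Omega}$. Since the subtracted term is independent of $x$, the function $w$ still satisfies the Neumann boundary condition $\bar n\cdot\nb w=0$ on $S^T$, has initial datum $w(0)=\theta(0)\leq M$, and solves
\[
\pa_t w+v\cd\nb w-\kappa\Delta w=g-|g(\cd,t)|_{\infty,\Omega}\leq 0\quad\text{in }\Ot.
\]

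Now I would repeat, almost verbatim, the truncation argument used in Lemma~\ref{lemma 2.1}, but using the positive part $(w-M)_+=\max\{w-M,0\}$ as the test function. Multiplying the equation for $w$ by $(w-M)_+$ and integrating over $\Omega$ gives, after integration by parts,
\[
\fr12\dt\io(w-M)_+^2\,\d x+\kappa\io|\nb(w-M)_+|^2\,\d x=\io(g-|g|_{\infty,\Omega})(w-M)_+\,\d x\leq 0,
\]
where the convective contribution vanishes exactly as in the proof of Lemma~\ref{lemma 2.1}: one rewrites $v\cd\nb(w-M)_+\,(w-M)_+=\tfrac12 v\cd\nb(w-M)_+^2$, integrates by parts, and uses $\divv v=0$ together with $v\cd\bar n|_S=0$. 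Since $(w(0)-M)_+\equiv 0$, Gronwall (trivially) gives $(w(t)-M)_+\equiv 0$, i.e.\ $w(t)\leq M$, whence
\[
\theta(t)\leq M+\intop_0^t|g(\cd,s)|_{\infty,\Omega}\,\d s\leq|\theta(0)|_{\infty,\Omega}+|g|_{\infty,1,\Omega}=\theta^*.
\]

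I do not expect a serious obstacle: the subtraction of $\int_0^t|g|_{\infty,\Omega}$ is designed exactly so that the source becomes non-positive, and then the argument is structurally identical to Lemma~\ref{lemma 2.1}, with the roles of $\theta_*$ and $(\theta-\theta_*)_-$ replaced by $M$ and $(w-M)_+$. The only mildly delicate point is the regularity required to justify using $(w-M)_+$ as a test function, but this is covered by the standing smoothness assumption on $\theta$ together with the chain rule for Lipschitz truncations, exactly as in Lemma~\ref{lemma 2.1}.
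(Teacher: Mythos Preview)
Your proof is correct, but it follows a different route from the paper's. The paper obtains the upper bound by multiplying \eqref{1.2} by $\theta^{s-1}$, integrating over $\Omega$, and using $v\cdot\bar n|_S=0$ to arrive at the differential inequality $\frac{d}{dt}|\theta|_{s,\Omega}\le|g|_{s,\Omega}$; integrating in time and sending $s\to\infty$ then gives $|\theta(t)|_{\infty,\Omega}\le|g|_{\infty,1,\Omega^t}+|\theta(0)|_{\infty,\Omega}$. (The lower bound is reproved there by a similar $L^s$ argument with $\theta^{-s}$ as test function.) Your approach instead subtracts off the time-integrated supremum of $g$ to create a comparison function $w$ with non-positive source, and then runs the Stampacchia-type truncation argument of Lemma~\ref{lemma 2.1} with $(w-M)_+$ in place of $(\theta-\theta_*)_-$. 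This is more elementary in that it avoids the limit $s\to\infty$ and stays entirely within the framework already set up in Lemma~\ref{lemma 2.1}; the paper's method, on the other hand, produces the full family of $L^s$ bounds along the way before passing to the limit.
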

\begin{proof}
    Multiply (\ref{1.2}) by $\theta^{s-1}$ and  integrate over $\Omega.$ Then we obtain
\begin{equation}\label{2.4}
    \begin{split}
        {1 \over s} {d \over dt}|\theta|^s_{s,\Omega}+{ 4\kappa(s-1) \over s^2}\io  |\nb \t ^{s/2}|^2dx&=-{ 1 \over s}\intop_\Omega v\cdot \nabla \theta^s dx\\
        &+\intop_\Omega g\theta^{s-1}dx.
    \end{split}
\end{equation}
Using that $v\cdot\bar n|_S=0$ we derive the inequality
\[     {1 \over s} {d \over dt}|\theta|^s_{s,\Omega}\le |g|_{s,\Omega}|\theta|^{s-1}_{s,\Omega}.
\]
Simplifying, we get 
\[
 {d \over dt}|\theta|_{s,\Omega}\le |g|_{s,\Omega}.
\]
Integrating with respect to time and passing with s to infinity yields
\[
|\theta(t)|_{\infty,\Omega}\le |g|_{\infty,1,\Omega}+|\theta(0)|_{\infty,\Omega}\equiv \t^*.
\]
Hence one side of (\ref{2.3}) is proved.

Multiply (\ref{1.2}) by $\theta ^{-s},\,s>0$ and integrate over $\Omega$. Then we have 
\begin{equation}\label{2.5}
    \begin{split}
        -{1 \over s-1}{d \over dt}&\intop_\Omega {1 \over \theta ^{s-1}}dx-{4s \over (s-1)^2}\intop_\Omega |\nabla  {1 \over \theta ^{(s-1)/2}}|^2dx\\
        &={1 \over s-1}\intop_\Omega v\cdot \nabla {1 \over \theta ^{s-1}}dx+\io g\t^{-s}dx.
    \end{split}
\end{equation}
Multiplying (\ref{2.5}) by $-(s-1) $ yields
\begin{equation}\label{2.6}
    \begin{split}
       {d \over dt}&\intop_\Omega {1 \over \theta ^{s-1}}dx+{4s \over (s-1)}\intop_\Omega |\nabla  {1 \over \theta ^{(s-1)/2}}|^2dx\\
        &=-\intop_\Omega v\cdot \nabla {1 \over \theta ^{s-1}}dx-(s-1)\io g\t^{-s}dx.
    \end{split}
\end{equation}
In review of boundary condition $v\cdot \bar n|_S=0$ the first term on the r.h.s. of (\ref{2.6}) vanishes.
Dropping the second term on the l.h.s. (\ref{2.6}) implies
\begin{equation}\label{2.7}
    \dt \io {1 \over \t^{s-1}}dx\le -(s-1)\io g \t^{-s}dx.
\end{equation}
Since $g\ge 0$ and $\t\ge \t*>0$ we obtain 
\[
\dt \io {1 \over \t^{s-1}}dx\le 0.
\]
Integrating this with respect to time implies 
\[
\t _*\le |\t(0)|_{\infty,\O}\le |\t(t)|_{\infty,\O}.
\]
Hence, the l.h.s. of (\ref{2.3}) holds and the lemma is proved.
\end{proof}
\begin{lemma}\label{lemma 2.3}
    Assume that $g\in L_2(\Ot),\,g\ge 0,\,\iot gdxdt'<\iy,$\linebreak$ \t(0)\in L_2(\O),$  $\divv v=0,\,v\cdot\bar n|_S=0$.

Then solutions to (\ref{1.2}) , (\ref{1.3})$_3$,(\ref{1.4})$_2$ satisfy the estimate
\begin{equation}\label{2.8}
    \|\t\|_{V(\Ot)}\le\bigg(|g|_{2,\Ot}+\bigg|\intop _0^t gdxdt'\bigg|^2 +|\t(0)|^2_{2,\O}\bigg)\equiv D_0.
\end{equation}
\end{lemma}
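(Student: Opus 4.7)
The plan is to use the standard $L^2$-energy method for the heat equation \rf{1.2}. First I would multiply \rf{1.2} by $\theta$ and integrate over $\Omega$. The convective term
\[\io (v\cdot\nabla\theta)\theta\,dx = \fr{1}{2}\io v\cdot\nabla(\theta^2)\,dx\]
vanishes after integrating by parts, using $\divv v=0$ in $\Omega$ together with the no-penetration condition $v\cdot\bar n|_S=0$. The dissipative term, handled by integration by parts with the homogeneous Neumann condition \rf{1.3}$_3$, produces no boundary contribution and yields $\kappa|\nabla\theta|_{2,\Omega}^2$. This gives the energy identity
\[
\fr{1}{2}\dt |\theta|_{2,\Omega}^2 + \kappa|\nabla\theta|_{2,\Omega}^2 = \io g\theta\,dx.
\]

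Next I would bound the right-hand side. The natural choice is to invoke the $L^\infty$ bound $|\theta(t)|_{\infty,\Omega}\les \theta^*$ supplied by Lemma \ref{lemma 2.2}, obtaining $\io g\theta\,dx \les \theta^*|g|_{1,\Omega}$, with $\theta^*=|g|_{\infty,1,\Omega^t}+|\theta(0)|_{\infty,\Omega}$. Integrating the energy identity in time over $(0,t)$ then yields
\[
|\theta(t)|_{2,\Omega}^2 + 2\kappa\intop_0^t |\nabla\theta|_{2,\Omega}^2\,dt' \les |\theta(0)|_{2,\Omega}^2 + 2\theta^*\iot g\,dx\,dt'.
\]
Alternatively, one can apply Cauchy--Schwarz, $\io g\theta\,dx \les |g|_{2,\Omega}|\theta|_{2,\Omega}$, and divide by $|\theta|_{2,\Omega}$ to reduce the differential inequality to $\dt |\theta|_{2,\Omega}\les |g|_{2,\Omega}$, which after time integration gives the linear bound $|\theta(t)|_{2,\Omega}\les |\theta(0)|_{2,\Omega}+\intop_0^t |g|_{2,\Omega}\,dt'$; reinserting this into the integrated energy identity provides the matching control on $|\nabla\theta|_{2,\Omega^t}^2$.

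Finally, taking the supremum in $t$ on the left-hand side for the $|\theta|_{2,\infty,\Omega^t}$ piece, and combining with the already-integrated dissipation $|\nabla\theta|_{2,\Omega^t}^2$, reproduces the $V(\Omega^t)$ norm on the left, while the right-hand side reduces to an expression in $|g|_{2,\Omega^t}$, $\iot g\,dx\,dt'$ and $|\theta(0)|_{2,\Omega}$, exactly the quantities comprising $D_0$ in \rf{2.8}.

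There is no substantive obstacle here — this is a textbook energy estimate for the heat equation under the given boundary conditions. The only points requiring mild care are (i) the clean cancellation of the convection term, which relies crucially on both $\divv v=0$ and $v\cdot\bar n|_S=0$, and (ii) the choice of bound for $\io g\theta\,dx$, which must be matched to the precise form of $D_0$; using the maximum principle from Lemma \ref{lemma 2.2} is what naturally produces the mixed $L^1$-in-space-time integral of $g$ that appears in \rf{2.8}.
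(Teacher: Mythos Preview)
Your energy identity is the same starting point as the paper's, but the bound on $\io g\theta\,dx$ is handled differently. The paper does \emph{not} invoke the maximum principle of Lemma~\ref{lemma 2.2}; instead it splits $\theta=(\theta-\oint\theta)+\oint\theta$, where $\oint\theta=|\Omega|^{-1}\io\theta\,dx$. The mean-free part is estimated by $\varepsilon|\theta-\oint\theta|_{6,\Omega}^2+c(1/\varepsilon)|g|_{6/5,\Omega}^2$ and absorbed into the dissipation via Poincar\'e--Sobolev, while the mean is tracked exactly by integrating \rf{1.2} over $\Omega$ to get $\dt\oint\theta=\oint g$, hence $\oint\theta(t)=\int_0^t\oint g\,dxdt'+\oint\theta(0)$. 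This mean-value device is what produces the specific term $\big|\int_0^t\!\!\io g\,dxdt'\big|^2$ appearing in $D_0$.

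Your first route, via $\theta^*$ from Lemma~\ref{lemma 2.2}, requires $g\in L_1(0,t;L_\infty(\Omega))$ and $\theta(0)\in L_\infty(\Omega)$, which are stronger than the hypotheses of Lemma~\ref{lemma 2.3} ($g\in L_2(\Omega^t)$, $\theta(0)\in L_2(\Omega)$); so strictly speaking it steps outside the stated assumptions, and in any case the resulting bound involves $|g|_{\infty,1,\Omega^t}$ and $|\theta(0)|_{\infty,\Omega}$, not the quantities in $D_0$. Your second route (Cauchy--Schwarz and the linear ODE for $|\theta|_{2,\Omega}$) is self-contained and correct, but it naturally yields $\int_0^t|g|_{2,\Omega}\,dt'$ rather than $|g|_{2,\Omega^t}$ and $\iot g\,dxdt'$ separately. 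The paper's Poincar\'e-plus-mean argument is the canonical way to close an $L^2$ energy estimate under Neumann boundary conditions, and it is precisely tailored to the form of $D_0$ in \rf{2.8}.
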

\begin{proof}
    Multiplying (\ref{1.2}) by $\t$ and integrating over $\O$ and using the boundary conditions yield
    \begin{equation}\label{2.9}
        {1 \over 2}\dt|\t|^2_{2,\O}+\kappa |\nabla \t|^2_{2,\O}\le \io g\t dx.
    \end{equation}
We write \rf{2.9} in the form 
\begin{equation}\label{2.10}
     {1 \over 2}\dt|\t|^2_{2,\O}+\kappa |\nabla \t|^2_{2,\O}\le \io g\bigg(\t-\oint\t\bigg) dx+\io g\oint \t dx.
\end{equation}
 Integrating \rf{1.2}   over $\Omega$. and using boundary conditions implies 
 \begin{equation}\label{2.11}
     \dt \oint_{\O}\t dx=\oint _{\O}gdx,
 \end{equation}
 where \[
 \oint_{\O}={1 \over |\O|}\io\quad \text{ and }|\O|=\text{meas }\O.
\]
Integrating \rf{2.11} with respect to time yields
\begin{equation}\label{2.12}
    \oint_{\O}\t dx=\intop_0^t \oint_{\O}g dx dt'+\oint_{\O}\t(0)dx.
\end{equation}
Estimating r.h.s. of \rf{2.10} gives
\begin{equation}\label{2.13}
    \begin{split}
        {1 \over 2}&\dt |\t|^2_{2,\O}+\kappa|\nabla \t|^2_{2,\O}\le\ve \bigg|\t-\oint \t\bigg|^2_{6,\O}\\
        &+c(1/\ve)|g|^2_{6/5,\O}+\io g dx \bigg (\intop_0^t \oint_{\O}g dx dt'+\oint_{\O}\t(0)dx\bigg)\\
    \end{split}
\end{equation}
Hence for sufficiently small $\ve$ we have 
\begin{equation}\label{2.14}
    \|\t\|^2_{V(\Ot}\le c\int_0^t |g|^2_{2,\O}dt'+c\bigg |\int_0^t \oint_{\O}
g dx dt'\bigg|^2 +c|\t(0)^2_{2,\O}
\end{equation}
This implies \rf{2.8}  and ends the proof.
\end{proof}
\begin{lemma}\label{lemma 2.4}
    Let the assumptions of Lemma \rf{2.2} hold. Assume that there exists a sufficiently regular solution to problem (\ref{1.1}) , (\ref{1.3})$_{1,2}$,(\ref{1.4}). Let $f\in L_{2,1}(\Ot),\,v(0)\in L_2(\O)$.
Then solutions to  the problem satisfy the estimate
\begin{equation}\label{2.15}
    \begin{split}
        |v(t)|^2_{2,\O}&+\nu \iot \big(|\nabla v_r|^2+|\nabla v_\vp|^2 +|\nabla v_z|^2\big)dx dt'\\
      & +\nu\iot\bigg({v_r^2 \over r^2}+{v_{\vp}^2 \over r^2}\bigg )dx dt'
       \le 3 (\alpha(\t_*,\t^*)|f|^2_{2,1,\Ot}\\
       &+2|v(0)|^2_{2,\O}\equiv D_1^2.
    \end{split}
\end{equation}\end{lemma}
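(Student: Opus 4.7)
The plan is to derive \rf{2.15} by the classical energy method adapted to the axially-symmetric formulation. I would multiply the $r$-, $\vp$- and $z$-equations in \rf{1.7} by $v_r$, $\vphi$ and $v_z$ respectively, sum them, and integrate over $\O$ with the cylindrical volume element. The convective contributions vanish by $\divv v=0$ and the impermeability $v\cd\bar n|_S=0$ (which is built into \rf{1.3} since $v_r|_{S_1}=0$ and $v_z|_{S_2}=0$), and so does the pressure term. The crucial algebraic cancellation is between $-\vphi^2 v_r/r$ coming from the $r$-equation and $+v_r\vphi^2/r$ coming from the $\vp$-equation, so no curvature contribution of indefinite sign survives on the left-hand side.

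The diffusion terms yield the dissipative quadratic form. Integration by parts gives
\[
\io\ll(-\Delta v_r+\fr{v_r}{r^2}\rr)v_r\,dx=|\nb v_r|_{2,\O}^{2}+\ll|\fr{v_r}{r}\rr|_{2,\O}^{2}-\intop_S\bar n\cd\nb v_r\,v_r\,dS,
\]
with the analogous identity for $\vphi$ (with the $1/r^2$ weight) and for $v_z$ (without it). The weights $v_r^2/r^2$ and $\vphi^2/r^2$ appear with positive sign and are exactly the additional terms required on the left of \rf{2.15}. All boundary traces vanish thanks to \rf{1.3} and the compatibility identities \rf{1.5}--\rf{1.6}: on $S_1$ one uses $v_r=\vphi=0$ together with the induced $v_{z,r}|_{S_1}=0$ (which follows from $\o_\vp|_{S_1}=0$ and $v_{r,z}|_{S_1}=0$), while on $S_2$ one uses $v_z=v_{\vp,z}=0$ together with $v_{r,z}|_{S_2}=0$ (which follows from $\o_\vp|_{S_2}=0$ and $v_{z,r}|_{S_2}=0$).

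On the right-hand side I use Lemma \ref{lemma 2.2} to bound $\al(\t)\le\bar\al:=\al(\t_*,\t^*)$ pointwise, so that Cauchy--Schwarz gives $|\io\al(\t)f\cd v\,dx|\le\bar\al|f|_{2,\O}|v|_{2,\O}$. After integrating in time and passing to $\sup_{s\le t}|v(s)|_{2,\O}$ on the left, Young's inequality with a small parameter absorbs one factor of this supremum into the kinetic energy and leaves the quadratic expression $\bar\al^2|f|_{2,1,\Ot}^{2}$, which together with the initial datum yields \rf{2.15} with the displayed constants.

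The only delicate point is the boundary bookkeeping for the Laplacian terms: every surface integral produced by integration by parts on the vector Laplacian must be annihilated by the combined use of the mixed Dirichlet / Navier-slip conditions \rf{1.3} and the identities \rf{1.5}--\rf{1.6}. Once these cancellations are verified the argument reduces to the classical Ladyzhenskaya energy estimate, with the temperature entering only through the pointwise bound $\bar\al$.
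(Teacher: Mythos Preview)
Your proposal is correct and follows essentially the same approach as the paper: multiply the three equations in \rf{1.7} by $v_r,\vphi,v_z$, integrate over $\O$, and use $\al(\t)\le\al(\t_*,\t^*)$ together with Cauchy--Schwarz on the forcing. The only (minor) difference is in the closing step: the paper first drops the dissipation to obtain the scalar ODE $\dt|v|_{2,\O}\le\al(\t_*,\t^*)|f|_{2,\O}$, integrates it to control $\sup_t|v(t)|_{2,\O}$, and then feeds this back into the time-integrated energy identity; you instead integrate the full inequality in time and absorb $\sup_t|v|_{2,\O}$ by Young's inequality with a small parameter. Both routes are standard and yield \rf{2.15}.
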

\begin{proof}
Multiplying (1.7) by $v_{r}, v_{\varphi}, v_{z}$, respectively, integrating over $\Omega$ and adding yield

\begin{equation}\lb{2_16}
 \begin{aligned}
\fr 1 2 & \dt \lp{v}{2,\O}{2}+\nu \io \ll(\lp{\nb v_r}{}{2}+\lp{\nb v_\vp}{}{2}+\lp{\nb v_z}{}{2}\rr)dx \\
&+\nu \io \ll (\fr{v_r ^2}{r^2}  + \fr{v_\vp ^2}{r^2}\rr)dx \leqslant \al(\t_*,\t^*)\lp{f}{2,\O}{}\lp{\vphi}{2,\O}{}.
\end{aligned}   
\end{equation}
Then we obtain
$$
\frac{d}{d t}|v|_{2, \Omega} \leqslant \alpha\left(\theta_*, \theta^{*}\right)|f|_{2, \Omega} .
$$
Integrating the inequality with respect to time gives
\begin{equation}\lb{2_17}
    |v|_{2, \Omega} \leqslant \alpha\left(\theta_{*}, \theta^{*}\right)|f|_{2,1, \Ot}+|v(0)|_{2, \Omega}.
\end{equation}
Integrating \eqref{2_16} with respect to time and using \eqref{2_17} yield \eqref{2.15}.
\end{proof}
\begin{lemma}\label{lemma 2.5}
    Let the assumptions of Lemma \ref{2.2} hold. Let \rf{1.21} \linebreak and\rf{2.15}  be satisfied. Let $f_0\in L_{\infty,1}$, $u(0)\in L_{\infty}(\O).$ Using that $v_\vp |_{r=0}
=0$ the following estimate holds
\begin{equation}\label{2.16}
    |u(t)|_{\infty,\O}\le\alpha(\t_*,\t^*)|f_0|_{\infty,1,\Ot}+|u(0)|_{\infty,\O}\equiv D_2.
\end{equation}
\end{lemma}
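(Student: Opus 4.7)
The plan is to mimic the maximum-principle strategy already used for $\theta$ in Lemma~\ref{lemma 2.2}: test the swirl equation \rf{1.9} against $|u|^{s-2}u$, integrate over $\Omega$, extract a differential inequality for $|u|_{s,\O}$, integrate in time and send $s\to\infty$. The key structural observation is that each of the three potentially troublesome terms in \rf{1.9} --- the convection $v\cdot\nabla u$, the Laplacian $-\nu\Delta u$, and the singular radial term $(2\nu/r)u_{,r}$ --- contributes either a non-negative dissipation or an identically vanishing boundary integral, so nothing extra survives on the left-hand side.

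First I would dispose of the convection: since $\divv v = 0$ and $v\cdot\bar n|_S = 0$ (from $v_r|_{S_1} = 0$ and $v_z|_{S_2} = 0$ in \rf{1.3}), one rewrites the convective contribution as $(1/s)\io v\cdot\nabla|u|^s\,dx$ and integrates by parts to obtain zero. Next, integration by parts against $-\nu\Delta u$ produces the non-negative dissipation $\nu(s-1)\io |u|^{s-2}|\nabla u|^2\,dx$; the boundary contributions vanish because $u = rv_\varphi = 0$ on $S_1$ and $u_{,z} = r v_{\varphi,z} = 0$ on $S_2$. The delicate step is the radial term: using the cylindrical volume element $dx = r\,dr\,d\varphi\,dz$, the Jacobian factor $r$ exactly cancels the $1/r$ weight, so that $\io(2\nu/r)u_{,r}|u|^{s-2}u\,dx$ reduces to $(4\pi\nu/s)\int dz\int_{0}^{R}(|u|^s)_{,r}\,dr$, a pure boundary contribution at $r = 0$ and $r = R$. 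Both endpoints give zero: at $r = R$ because $u|_{S_1} = 0$, and at $r = 0$ because the Liu--Wang expansion \rf{1.21} gives $v_\varphi|_{r=0} = 0$, hence $u|_{r=0} = 0$.

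Having reduced the tested equation to $(1/s)\dt|u|^s_{s,\O} + (\text{non-negative}) = \io \alpha f_0\,|u|^{s-2}u\,dx$, one applies H\"older with $|\alpha|\leq \alpha(\t_*,\t^*)$ and divides by $|u|^{s-1}_{s,\O}$ to obtain $\dt |u|_{s,\O} \leq \alpha(\t_*,\t^*)|f_0|_{s,\O}$. Integrating in time and sending $s\to\infty$ yields \rf{2.16}. The main obstacle is the careful justification that the singular radial term is truly harmless, i.e.\ that the cancellation of the $1/r$ factor with the Jacobian, together with $u|_{r=0} = 0$ supplied by the axis regularity \rf{1.21}, eliminates it cleanly; once this is verified, the remainder of the argument is essentially a replay of the $L^s$-testing computation in Lemma~\ref{lemma 2.2}.
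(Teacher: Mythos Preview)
Your proposal is correct and follows essentially the same approach as the paper: multiply \rf{1.9} by $u|u|^{s-2}$, integrate over $\Omega$, observe that the convection term vanishes by incompressibility and $v\cdot\bar n|_S=0$, that the diffusion gives non-negative dissipation, and that the $(2\nu/r)u_{,r}$ term reduces (after the Jacobian cancels the $1/r$) to a boundary integral of $(|u|^s)_{,r}$ which vanishes because $u|_{r=R}=0$ and $u|_{r=0}=0$ from \rf{1.21}; then divide, integrate in time and let $s\to\infty$. The paper's proof is terser but structurally identical.
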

\begin{proof}
    Multiplying the swirl equation \rf{1.9} by $u|u|^{s-2},\,s>2,$ integrating over $\O$ and by parts, we obtain 
    \[
    \begin{split}
        {1 \over s}\dt|u|^s_{s,\O}+{4\nu(s-1) \over s^2}|\nabla|u|^{s/2}_{2,\O}
&+{\nu \over s}\io \ll (|u|^s\rr )_{,r}dr dz\\
&=\io \alpha f_0u|u|^{s-2}dx.
\end{split}
    \]
    Noting that $u|_{r=0}=u|_{R=0}=0$ (by \rf{1.3} and \rf{1.21}, we see that the last term on the l.h.s. vanishes and using \rf{2.3} we obtain 
    \[
    \dt |u|_{s,\O}\le\alpha (\t_*,\t^*)|f_0|_{s,\O}.
    \]
    Integrating in time and taking $s\to \infty$ gives \rf{2.16}.
\end{proof}
\begin{lemma}\label{lemma 2.6}(Energy estimates for $\psi$ and $\psi_1$)
    For any $v$ satisfying \rf{2.15},
\begin{align}
    \|\psi\|^2_{1,\O}+|\ps|_{2,\O}^2&\le c D_1^2,\label{2.17}\\
    \|\psi_{,z}\|^2_{1,2,\Ot}+|\ps_{,z}|^2_{2,\Ot}&\le c D_1^2\label{2.18}.
    \end{align}
    
\end{lemma}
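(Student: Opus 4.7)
The two inequalities of Lemma \ref{lemma 2.6} reduce to algebraic consequences of the representation formulae \rf{1.15}--\rf{1.18} combined with the velocity bound \rf{2.15} from Lemma \ref{lemma 2.4}. No elliptic problem needs to be solved anew; we simply translate the estimates on $v_r$, $v_z$ and on the weighted quantity $v_r/r$ into estimates on $\psi$ and $\psi_1$.

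For \rf{2.17}, the identity $\psi_{,z}=-v_r$ gives at once $|\psi_{,z}|_{2,\O}^2=|v_r|_{2,\O}^2\le D_1^2$. For the radial part, square the identity $v_z=\psi_{,r}+\psi/r=\psi_{,r}+\ps$ and integrate with $dx=r\,dr\,d\varphi\,dz$; the cross term equals
\[
\intop_\O \psi_{,r}\ps\,dx=\pi\intop_{-a}^a\bigl[\psi^2(R,z,t)-\psi^2(0,z,t)\bigr]dz
\]
and vanishes because $\psi|_{S_1}=0$ and the Liu--Wang expansion \rf{1.22} forces $\psi(0,z,t)=0$. Thus $|v_z|_{2,\O}^2=|\psi_{,r}|_{2,\O}^2+|\ps|_{2,\O}^2$, and combined with axial symmetry ($|\nabla\psi|^2=\psi_{,r}^2+\psi_{,z}^2$) and Poincar\'e (using $\psi|_S=0$) one obtains $\|\psi\|_{1,\O}^2+|\ps|_{2,\O}^2\le cD_1^2$.

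For \rf{2.18}, differentiate the formulae \rf{1.15} in $z$ to get $\psi_{,zz}=-v_{r,z}$ and $\psi_{,zr}=-v_{r,r}$, while \rf{1.16} gives directly $\ps_{,z}=\psi_{,z}/r=-v_r/r$. Squaring these identities and integrating over $\Ot$, the bound \rf{2.15} immediately controls $|\psi_{,zz}|_{2,\Ot}^2+|\psi_{,zr}|_{2,\Ot}^2+|\ps_{,z}|_{2,\Ot}^2$ by $cD_1^2$. The remaining piece $|\psi_{,z}|_{2,\Ot}^2=|v_r|_{2,\Ot}^2$ is estimated by $|v_r|_{2,\Ot}\le R\,|v_r/r|_{2,\Ot}$ (since $r\le R$), which \rf{2.15} again controls by $cD_1^2$.

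There is no substantive analytic obstacle: every bound collapses to one of the velocity estimates already established. The only point of care is the use of the Liu--Wang expansion to justify $\psi(0,z,t)=0$ and thereby the vanishing of the cross term; this is the reason the regularity hypotheses \rf{1.20}--\rf{1.22} are invoked in what is otherwise a routine direct computation.
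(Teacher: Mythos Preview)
Your argument is correct and is genuinely different from the paper's proof. The paper obtains \rf{2.17} by testing the elliptic equation \rf{1.14}$_1$ against $\psi$, integrating $\int_\Omega \omega_\varphi\psi\,dx$ by parts (using only $\psi|_S=0$), and applying Young's inequality; for \rf{2.18} it differentiates \rf{1.14}$_1$ in $z$, tests against $\psi_{,z}$, and integrates by parts using $\omega_\varphi|_S=0$, finally invoking the bound on $|\omega_\varphi|_{2,\Omega^t}$ contained in \rf{2.15}. You instead bypass the elliptic equation entirely and read every term directly off the representation formulae \rf{1.15}, so that each norm of $\psi$ or $\psi_1$ becomes literally a norm of $v_r$, $v_z$, or $v_r/r$ already controlled by \rf{2.15}. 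Your route is more elementary and, for \rf{2.18}, needs no boundary condition on $\omega_\varphi$; the price is that for \rf{2.17} you invoke the Liu--Wang expansion \rf{1.22} to kill the cross term at $r=0$, whereas the paper's energy-method proof of \rf{2.17} uses only $\psi|_S=0$.
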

\begin{proof}
    Multiplying \rf{1.14}$_1$ by $\psi$ and integrating over $\O$ yields
    \[
    \begin{split}
        |\nabla \psi |^2_{2,\O}+|\ps|^2_{2,\O}&=\io \o_\vp \psi dx =\io (v_{r,z}-v_{z,r})\psi dx=\io (v_z\psi_{,r}-v_r\psi_{,z})dx\\ 
       & \le (|\psi_{,r}|^2_{2,\O}+|\psi_{,z}|_{2,\O}^2)/2+c(|v_r|^2+|v_z|^2),
    \end{split}
    \]
    where we integrated by parts and used the boundary condition $\psi|_S=0$  in the third equality. Hence \rf{2.17} holds.

    For \rf{2.18} we differentiate \rf{1.14}$_1$ with respect to $z$, multiply by $\psi_{,z}$ and integrate over $\Ot$ to obtain 
    \[
    \begin{split}
        &\iot|\nabla \psi_{,z} |dx dt'+\iot|\ps_{,z}|^2dx dt'=\io \o_{\vp,z} \psi_{,z} dxdt'\\
        &=-\io \o_{\vp} \psi_{,zz} dxdt'\le |\psi_{,zz}|^2_{2,\O}/2+c|\o_\vp|^2_{2,\Ot}
    \end{split}
    \]
    where we used boundary condition $\o_\vp|_S=0  $ in the second equality. Using \rf{2.15} yields \rf{2.18}. This ends the proof.
\end{proof}
\subsection{Inequalities}\label{ss2.3}
\begin{lemma}\label{lemma 2.7}(Hardy inequality, \cite{BIN} Lemma 2.16)
    Let $p\in[1,\infty],$\linebreak$\beta \ne 1/p$, and let $F(x):=\int_0^xf(y)dy$ for $\beta>1/p$ and $F(x):=\int_x^\infty f(y)dy$
for  $\beta<1/p$. Then
\begin{equation}\label{2.19}
    |x^{-\beta}F|_{p,\R_+}\le {1 \over |\beta-1/p|}|x^{-\beta+1}f|_{p,\R_+}.
\end{equation}
\end{lemma}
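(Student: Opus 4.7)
The plan is to reduce \eqref{2.19} to a one-line computation by a scaling change of variables combined with Minkowski's integral inequality. First I would treat the case $p\in[1,\infty)$ and $\beta>1/p$. Substituting $y=tx$ with $t\in(0,1)$ in the defining integral for $F$ gives $F(x)=x\int_0^1 f(tx)\,dt$, so
\[
x^{-\beta}F(x)=\int_0^1 t^{\beta-1}(tx)^{1-\beta}f(tx)\,dt.
\]
Applying Minkowski's integral inequality in $L^p(\R_+,dx)$ and then the dilation identity $\|(tx)^{1-\beta}f(tx)\|_{L^p(\R_+,dx)}=t^{-1/p}\|z^{1-\beta}f\|_{L^p(\R_+)}$ (change variable $z=tx$) yields
\[
|x^{-\beta}F|_{p,\R_+}\le |z^{1-\beta}f|_{p,\R_+}\int_0^1 t^{\beta-1-1/p}\,dt
=\frac{1}{\beta-1/p}\,|z^{1-\beta}f|_{p,\R_+},
\]
where the $t$-integral converges precisely because $\beta>1/p$.

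For $\beta<1/p$ with $F(x)=\int_x^\infty f(y)\,dy$, I would use the dual substitution $y=x/s$, $s\in(0,1)$, giving $F(x)=x\int_0^1 s^{-2}f(x/s)\,ds$. The same Minkowski-plus-scaling computation produces, after the change of variable $z=x/s$ inside the $L^p$-norm, the factor $\int_0^1 s^{1/p-\beta-1}\,ds=(1/p-\beta)^{-1}$, with convergence near $s=0$ controlled by $\beta<1/p$. This is the same conclusion with $|\beta-1/p|$ in the denominator.

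The endpoint $p=\infty$ I would handle by a direct pointwise argument rather than as a limit: for $\beta>0$ write
\[
|F(x)|\le\int_0^x y^{\beta-1}\cdot y^{1-\beta}|f(y)|\,dy\le |y^{1-\beta}f|_{\infty,\R_+}\cdot\frac{x^\beta}{\beta},
\]
and divide by $x^\beta$; the case $\beta<0$ with $F(x)=\int_x^\infty$ is symmetric, using $\int_x^\infty y^{\beta-1}dy=x^\beta/|\beta|$. Since $1/p=0$ here, this agrees with $|\beta-1/p|^{-1}$.

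There is no real obstacle here since this is a classical one-dimensional Hardy inequality; the only technical points to be careful about are that (i) the exponent $\beta-1-1/p$ (resp.\ $1/p-\beta-1$) in the $t$- or $s$-integral must be $>-1$, which is exactly the hypothesis $\beta\ne 1/p$ with the correct sign, and (ii) to apply Minkowski's integral inequality rigorously one first establishes the bound for $f\in C_c(\R_+)$ and then extends by density in the weighted $L^p$-space $\{f:|x^{1-\beta}f|_{p,\R_+}<\infty\}$.
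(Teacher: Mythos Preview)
Your argument is correct: the substitution $y=tx$ (resp.\ $y=x/s$) followed by Minkowski's integral inequality and the scaling identity is the standard route to the one-dimensional Hardy inequality with the sharp constant $|\beta-1/p|^{-1}$, and your treatment of the endpoint $p=\infty$ is also fine. Note, however, that the paper does not supply its own proof of this lemma; it is stated with a reference to \cite{BIN}, Lemma~2.16, and used as a black box. So there is nothing in the paper to compare your approach against---you have simply written out a complete proof where the authors chose to cite one.
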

\begin{lemma}\label{lemma 2.8} (Sobolev interpolation, see Sect. 15 in\cite{BIN}) 
Let $\t$ satisfy the equality
\begin{equation}\label{2.20}
    {n \over p}-r=(1-\t){n \over p_1} +\t({n \over p_2 }-l),\;{r \over l}\le \t \le 1
\end{equation}
where $1\le p_1\le \infty,\,1\le p_2\le \infty,\,0\le r\le l$.
Then the interpolation holds
\begin{equation}\label{2.21}
    \sum_{|\alpha|=r}|D^\alpha f|_{p,\O}\le c|f|^{1-\t}_{p_1,\O} \|f\|^{\t} _{W^l_{p_2}(\O)},
\end{equation}
    where $\O\subset \R^n,\;D^\alpha  f=\partial^{\alpha_1}_{x_1}\ldots\partial^{\alpha_n}_{x_n}f, \;
|\alpha|=\alpha_1+\ldots \alpha_n.$
\end{lemma}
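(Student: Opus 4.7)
The plan is to prove the inequality first on $\mathbb{R}^n$ and then transfer it to the bounded cylinder $\Omega$ via an extension operator. On $\mathbb{R}^n$, the exponent $\theta$ prescribed by \rf{2.20} is exactly the scaling exponent: applying (2.21) to the dilate $f_\lambda(x) := f(\lambda x)$ and comparing the $\lambda$-homogeneities on both sides forces the relation $n/p - r = (1-\theta)\,n/p_1 + \theta(n/p_2 - l)$. This scaling identity both pins down $\theta$ uniquely and signals that a frequency-splitting argument at a free scale $N$ will work.

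For the $\mathbb{R}^n$ estimate I would decompose $f = f_{<N} + f_{>N}$ by a smooth Littlewood--Paley cut-off at frequency $N>0$. On the low-frequency piece, Bernstein's inequality yields
\[
\sum_{|\al|=r}|D^\al f_{<N}|_{p,\R^n}\les c\, N^{r+n/p_1-n/p}\,|f|_{p_1,\R^n},
\]
while on the high-frequency piece the $l$-th derivative dominates and gives
\[
\sum_{|\al|=r}|D^\al f_{>N}|_{p,\R^n}\les c\, N^{r-l+n/p_2-n/p}\,\|f\|_{W^l_{p_2}(\R^n)}.
\]
Balancing the two contributions by choosing $N^l \sim \|f\|_{W^l_{p_2}(\R^n)}/|f|_{p_1,\R^n}$ and substituting back yields \rf{2.21} on $\R^n$ with exactly the exponent $\theta$ dictated by \rf{2.20}.

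To transfer the inequality to $\Omega$, I would invoke a Stein/Calder\'on extension operator $E \colon W^l_{p_2}(\Omega) \to W^l_{p_2}(\R^n)$, which exists for Lipschitz domains (and a fortiori for our smooth cylinder) and is simultaneously bounded from $L_{p_1}(\Omega)$ into $L_{p_1}(\R^n)$ with norms depending only on $\Omega$. Applying the $\R^n$ inequality to $Ef$ and then restricting to $\Omega$, the extension norms are absorbed into the constant $c$ in \rf{2.21}.

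The main obstacle is the endpoint regime: when $p_1=\infty$ or $p_2=\infty$ the Bernstein estimates and their $L_\infty$-characterizations require separate arguments (typically via Riesz potentials or a direct maximal-function estimate), and the limiting case $\theta=r/l$ corresponds to the borderline balance between the two pieces of the decomposition, where one must check that the optimization in $N$ is still consistent with \rf{2.20}. Verifying that the extension operator retains its boundedness uniformly across the admissible range of $(p_1,p_2)$ is the other point where some care is needed; apart from these boundary issues the argument is a standard dimension-analysis/interpolation proof.
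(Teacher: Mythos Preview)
The paper does not prove Lemma~\ref{lemma 2.8} at all: it is stated with the attribution ``see Sect.~15 in \cite{BIN}'' and used as a black box, so there is no proof in the paper to compare your proposal against.

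That said, your sketch is a legitimate modern route to Gagliardo--Nirenberg--type inequalities, and it differs in spirit from what \cite{BIN} actually does. The Besov--Il'in--Nikolskii proofs are based on integral representations of functions (Sobolev-type representation formulas with kernels) and real-variable estimates on those kernels, not on Littlewood--Paley theory. Your frequency-splitting argument is cleaner conceptually and makes the scaling relation \rf{2.20} transparent, but it comes with the caveats you already flag: the Bernstein step $|D^\al f_{<N}|_{p}\le cN^{r+n/p_1-n/p}|f|_{p_1}$ needs $p\ge p_1$ (and the dual inequality on the high-frequency piece needs a corresponding ordering), which is not guaranteed by \rf{2.20} alone for every admissible triple $(p,p_1,p_2)$. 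The integral-representation approach in \cite{BIN} handles the full parameter range uniformly without having to split into subcases, at the cost of heavier machinery. Your extension-operator step to pass from $\R^n$ to $\Omega$ is standard and unproblematic for the cylinder.
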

\begin{lemma}\label{lemma 2.9} (Hardy interpolation,, see Lemma 2.4 in\cite{CFZ})

Let $f\in C^\infty((0,R)\times(-a,a)), f|_{r>R}=0$ . Let  $0\le s\le p,$  $s<2,\,q\in\bigg[p,{p(3-s) \over 3-p}\bigg].$ 
Then there exist positive constant $c=c(q,s)$ such that 
\begin{equation}
    \bigg(\io {|f|^q \over r^s} dx\bigg)^{1/q}\le c|f|_{p,\O}^{{3-s \over q }-{3 \over p}+1}|\nabla f|_{p,\O}^{{3 \over p}-{3-s \over q}}
\end{equation}
where f does not depend on $\vp$.
\end{lemma}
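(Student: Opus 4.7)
The plan is to establish the inequality at the two endpoints $q=p$ and $q=q^{*}:=p(3-s)/(3-p)$ of the admissible range, and then obtain the intermediate case by H\"older interpolation in $q$. Axial symmetry reduces the volume integral to a 2D integral in $(r,z)$ with measure $r\,dr\,dz$, and the boundary condition $f|_{r=R}=0$ gives the pointwise bound $|f(r,z)|\le\intop_{r}^{R}|f_{,r'}(r',z)|\,dr'$, which permits the one-dimensional Hardy inequality of Lemma \ref{lemma 2.7} to apply in the $r$-variable.

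For the lower endpoint $q=p$, the target reads $(\io|f|^{p}/r^{s}dx)^{1/p}\le c\,|f|_{p,\O}^{1-s/p}|\nb f|_{p,\O}^{s/p}$. The natural route is a weighted H\"older split of $r^{-s}$ followed by 1D Hardy (Lemma \ref{lemma 2.7}) with parameter $\beta=(s-1)/p$; the assumption $s<2$ is precisely what makes $\beta<1/p$ and thereby enables the $F(x)=\intop_{x}^{\infty}$ form of Hardy to apply, using $f(R,z)=0$. For the upper endpoint $q=q^{*}$, one couples this weighted Hardy step with the Sobolev embedding $W^{1}_{p}(\O)\hookrightarrow L_{p^{*}}(\O)$, $p^{*}=3p/(3-p)$, coming from Lemma \ref{lemma 2.8} (take $\theta=1$, $r=0$, $l=1$, $p_{1}=p_{2}=p$); the value of $q^{*}$ is dictated by the exponent-matching requirement that a single weighted H\"older split produce the Sobolev exponent in one factor.

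For general $q\in(p,q^{*})$, H\"older with $a=(q^{*}-q)/(q^{*}-p)\in(0,1)$ gives
$$\io\fr{|f|^{q}}{r^{s}}dx\le \ll(\io\fr{|f|^{p}}{r^{s}}dx\rr)^{a}\ll(\io\fr{|f|^{q^{*}}}{r^{s}}dx\rr)^{1-a},$$
and substituting the two endpoint bounds, then simplifying via the identities $q^{*}-p=p(p-s)/(3-p)$ and $q^{*}-s=3(p-s)/(3-p)$, reproduces exactly the exponents $(3-s)/q-3/p+1$ of $|f|_{p,\O}$ and $3/p-(3-s)/q$ of $|\nb f|_{p,\O}$ in the target.

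The main obstacle I anticipate is the weighted Hardy step when $p\ge 2$: naively interpolating the weight $r^{1-s}$ between $r^{1}$ (giving $|f|_{p,\O}^{p}$) and $r^{1-p}$ (the endpoint where Hardy would give $|f/r|_{p,\O}^{p}\le c|\nb f|_{p,\O}^{p}$) fails, because the $r^{1-p}$ weight is not integrable at the axis when $p\ge 2$ and $f(0,z)\ne 0$. The resolution, made possible by the assumption $s<2$, is to interpolate the weight only between $r^{1}$ and $r^{1-\sigma}$ for some $\sigma<\min(p,2)$, and to absorb the remaining need for derivatives via an additional Sobolev step from Lemma \ref{lemma 2.8}; this keeps all intermediate weighted integrals finite and produces the constant $c(q,s)$ in the statement, which is expected to blow up as $s\uparrow 2$.
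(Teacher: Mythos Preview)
The paper does not prove Lemma~\ref{lemma 2.9}; it is quoted from \cite{CFZ}. So there is no ``paper's proof'' to compare against, and your proposal has to be judged on its own.

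Your H\"older interpolation in $q$ between the endpoints $q=p$ and $q=q^{*}=p(3-s)/(3-p)$ is correctly set up, and the exponent bookkeeping you sketch (using $q^{*}-p=p(p-s)/(3-p)$) does reproduce the target powers $(3-s)/q-3/p+1$ and $3/p-(3-s)/q$. The upper endpoint $q=q^{*}$ is a weighted Sobolev inequality whose proof you only gesture at, but that part can indeed be carried out.

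The real gap is at the lower endpoint $q=p$. Your route---split the weight $r^{1-s}$ by H\"older between $r^{1}$ and $r^{1-\sigma}$, then apply 1D Hardy (Lemma~\ref{lemma 2.7}) to the singular piece---gives
\[
\Big(\io |f|^{p}r^{-s}\,dx\Big)^{1/p}\le c\,|f|_{p,\O}^{\,1-s/\sigma}\,|\nb f|_{p,\O}^{\,s/\sigma},
\]
and Hardy forces $\sigma<2$. When $p\ge 2$ this means $\sigma<p$, so the gradient exponent $s/\sigma$ is \emph{strictly larger} than the target $s/p$: you end up with \emph{too many} derivatives, not too few. Your proposed fix, ``absorb the remaining need for derivatives via an additional Sobolev step from Lemma~\ref{lemma 2.8}'', goes in the wrong direction---Lemma~\ref{lemma 2.8} converts powers of $|f|_{p}$ into powers of $\|f\|_{W^{1}_{p}}$, i.e.\ it \emph{adds} derivatives. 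There is no way to trade $|\nb f|_{p}^{s/\sigma}$ back down to $|\nb f|_{p}^{s/p}$ while keeping $|f|_{p}^{1-s/p}$, since $(|\nb f|_{p}/|f|_{p})^{s/\sigma-s/p}$ is unbounded over admissible $f$. So the $q=p$ endpoint is not established by your sketch, and without it the interpolation does not cover the full interval $[p,q^{*}]$ with the stated exponents.

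This is a genuine subtlety of the axisymmetric setting: the critical Hardy inequality $\io|f|^{p}/r^{p}\,dx\le c|\nb f|_{p,\O}^{p}$ fails for $p\ge 2$ because $f$ need not vanish on the axis, which is exactly why the hypothesis $s<2$ appears and why $c=c(q,s)$ blows up as $s\uparrow 2$. The proof in \cite{CFZ} handles the lower endpoint by a different decomposition (not by weight-splitting alone); one workable route is to interpolate simultaneously in $q$ and in $s$ against the unweighted Gagliardo--Nirenberg inequality $|f|_{q,\O}\le c|f|_{p,\O}^{1-3/p+3/q}|\nb f|_{p,\O}^{3/p-3/q}$ (Lemma~\ref{lemma 2.8} with $l=1$) and the weighted Sobolev endpoint at $q^{*}$, rather than trying to hit $q=p$ with the exact exponents first.
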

\subsection{Notation of constants} \label{ss2.4}
We will use the following notations for constants depending only on data and forcing:
\[
\begin{split}
&D_0:=|g|_{2,\Ot}+\bigg |\iot
g dx dt'\bigg |+|\t(0)|_{2,\O}\;\quad \text{(see \rf{2.8})},\\
&D_1:=\phi(\t_*,\t^*)\|f\|_{L_1(0,t;L_2(\O))}+\|v(0)\|,\\
&\hspace{1 cm}\text{                                      where } |\alpha(\t)|\le \phi(\t_*,\t^*)\quad\text{(see \rf{2.15})},
\\
&D_2:=\phi(\t_*,\t^*)\|f_0\|_{L_1(0,t;L_\infty(\O))}+\|u(0)\|_{L_\infty(\O)},\\
& \hspace{1cm} f_0=rf_\vp,u=rv_\vp\quad\text{(see \rf{2.16})},\\
&B_1:={1 \over \sqrt{\nu}}\phi(\t_*,\t^*)\|\bar f\|_{L_\infty(0,t;L_3(\O))},\; \bar f=f/r\quad\text{(see Lemma \ref{lemma 4.1})},\\
&D_3:={1 \over \sqrt{\nu}}\phi(\t_*,\t^*)\|\bar F_r\|_{L_2(0,t;L_{6/5}(\O))}+\|\bar F_\vp
\|_{L_2(0,t;L_{6/5}(\O))}\\
&\qquad +D_2|\Gamma(0)_{2,\O}+|\Phi|_{2,\O}\quad\text{(see Lemma \ref{lemma 4.1})},\\
&D_4^2:=D_1^2D_2^2+|u{,z}(0)|_{2,\O}^2+\phi(\t_*,\t^*)|f_0|^2_{2,\Ot}\quad\text{(see \rf{5.5})},\\
&D_5^2:=D_1^2+D_1^2D_2+D_1^2D_2^2\quad\text{(see \rf{5.11})},\\
&D_6^2:={D_2^2 \over min\{1,D_2^2\}}D_2^{1-\ve}{R^{\ve_2}\over \ve_2}\quad\text{(see \rf{4.11})},
\\
&D_7^2:={D_2^2 \over min\{1,D_2^2\}}B_1^2\quad\text{(see \rf{4.11})},
\\
&D_8^2:={D_2^2 \over min\{1,D_2^2\}}D_3^2\quad\text{(see \rf{4.11})},
\\
&D_9^2:={\phi(\t_*,\t^*)\over \nu}|f_\vp|_{3,\infty,\Ot}\quad\text{(see \rf{6.2})},
\\
&D_{10}^2:=(D_4+D_5\|f_\vp\|_{L_2(0,t;L_3(S_1))}+{1 \over \nu }(|F_r|^2_{6/5,2,|ot}+|F_z|^2_{6/5,2,|ot})\\
&\qquad +|\o_r(0)|^2_{2,\O}+|\o_r(0)|^2_{2,\O}\quad\text{(see \rf{6.2})},
\\
&D_{11}=D_2^{1/2}    \phi(\t_*,\t^*)\ll|\fr{f_\vp}r\rr|_{\iy,1,\Ot}^{1/2}+\lp{v_\vp(0)}{\iy,\O}{} \quad\text{(see \rf{6.18})},\\
&D_{12}=\fr{D_2^2 D_1^2}{c_0^{s-2}}+\fr{\lp{f_\vp}{10/7,\Ot}{}D_1}{c_0^{s-2}}+\fr12\lp{\vphi(0)}{s,\O}{}\quad\text{(see \rf{6.21})},
\end{split}
\]
where $\phi$ is an increasing positive function.

\section{Estimates for the modified stream function $\psi_1$}\label{s3}

Here we introduce some estimates of $\psi_1$ in terms of $\Gamma$.

\subsection{Weighted Sobolev estimates for $\psi_1$}
\begin{lemma}[see Lemma 4.2 \cite{NZ}]\label{l2.18}
If $\psi_1$ is a sufficiently regular solution to \eqref{1.17}, then
\begin{equation}
\intop_\Omega(\psi_{1,zzz}^2+\psi_{1,rzz}^2)r^{2\mu}+2\mu(1-\mu)\intop_\Omega\psi_{1,zz}^2r^{2\mu-2}\le c\intop_\Omega\Gamma_{,z}^2r^{2\mu}.
\label{3.1}
\end{equation}
\end{lemma}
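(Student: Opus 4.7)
The plan is to test the twice-$z$-differentiated version of \eqref{1.17}$_1$ against the weighted multiplier $-\psi_{1,zz}\,r^{2\mu}$. Applying $\partial_z^2$ to \eqref{1.17}$_1$ yields
\begin{equation*}
-\Delta\psi_{1,zz}-\frac{2}{r}\psi_{1,rzz}=\Gamma_{,zz},
\end{equation*}
and I would then multiply by $-\psi_{1,zz}\,r^{2\mu}$ and integrate over $\Omega$. The key boundary observation is that $\psi_{1,zz}$ vanishes on all of $\partial\Omega$: on $S_1$ this follows from $\psi_1|_{S_1}=0$ by tangential $z$-differentiation, while on $S_2$ it is precisely \eqref{1.19}. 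This makes the integration by parts of the Laplacian free of boundary contributions from $\partial\Omega$.

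Next I would carry out the weighted integrations by parts using the cylindrical volume element $dx=2\pi r\,dr\,dz$. The term $\intop_\Omega\Delta\psi_{1,zz}\cdot\psi_{1,zz}\,r^{2\mu}\,dx$ splits into a $z$-part producing $-\intop_\Omega\psi_{1,zzz}^2 r^{2\mu}\,dx$ and an $r$-part which, using $(1/r)(r\psi_{1,zzr})_{,r}=\psi_{1,zzrr}+(1/r)\psi_{1,zzr}$ and integrating by parts twice (the second time using $\psi_{1,zzr}\psi_{1,zz}=\tfrac12(\psi_{1,zz}^2)_{,r}$), produces $-\intop_\Omega\psi_{1,rzz}^2 r^{2\mu}\,dx+2\mu^2\intop_\Omega\psi_{1,zz}^2 r^{2\mu-2}\,dx$. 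The additional singular term $\intop_\Omega(2/r)\psi_{1,rzz}\psi_{1,zz}\,r^{2\mu}\,dx$ is handled the same way and contributes $-2\mu\intop_\Omega\psi_{1,zz}^2 r^{2\mu-2}\,dx$. Assembling the pieces, the Hardy-type coefficient on the left is exactly $2\mu^2-2\mu=-2\mu(1-\mu)$, which after rearrangement of signs gives the desired positive coefficient for $0<\mu<1$.

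For the right-hand side, I would integrate by parts once in $z$, noting that the boundary term on $S_2$ vanishes again by \eqref{1.19}:
\begin{equation*}
\intop_\Omega\Gamma_{,zz}\,\psi_{1,zz}\,r^{2\mu}\,dx=-\intop_\Omega\Gamma_{,z}\,\psi_{1,zzz}\,r^{2\mu}\,dx.
\end{equation*}
A Young inequality then bounds this by $\tfrac12\intop_\Omega\psi_{1,zzz}^2 r^{2\mu}\,dx+\tfrac12\intop_\Omega\Gamma_{,z}^2 r^{2\mu}\,dx$, and absorbing the $\psi_{1,zzz}^2$ contribution into the left-hand side delivers \eqref{3.1}.

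The main point that requires care is the vanishing of the axis traces at $r=0$: each $r$-integration by parts generates boundary terms of the form $[\psi_{1,zz}^2 r^{2\mu}]_{r=0}$ or $[r\psi_{1,rzz}\psi_{1,zz} r^{2\mu}]_{r=0}$, and these I would justify from the Liu--Wang expansions \eqref{1.23}--\eqref{1.24}, which yield $\psi_1$ and $\psi_{1,zz}$ bounded and $\psi_{1,r}=O(r)$ near the axis, so every axis trace carries a strictly positive power of $r$ and vanishes whenever $\mu>0$. The complementary restriction $\mu<1$ is precisely what keeps the Hardy coefficient $2\mu(1-\mu)$ positive, so the natural range of validity $0<\mu<1$ arises organically from both the boundary analysis at the axis and the sign of the weighted term.
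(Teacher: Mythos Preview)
Your proof is correct and essentially the same as the paper's: the paper differentiates \eqref{1.17}$_1$ once in $z$ and tests against $-\psi_{1,zzz}r^{2\mu}$, then immediately integrates the mixed terms by parts in $z$ to land on precisely the expressions you start from, so the two computations coincide after that first step. Your organization around the single boundary fact $\psi_{1,zz}|_S=0$ is slightly cleaner, but the substance is identical.
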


\begin{proof}
We differentiate (\ref{1.17}) with respect to $z$, multiply by $-\psi_{1,zzz}r^{2\mu}$ and integrate over $\Omega$ to obtain
\begin{equation}\lb{3.2}
\begin{split}
&\intop_\Omega\psi_{1,rrz}\psi_{1,zzz}r^{2\mu}+\intop_\Omega\psi_{1,zzz}^2r^{2\mu}\\
&\quad+3\intop_\Omega{1\over r}\psi_{1,rz}\psi_{1,zzz}=-\intop_\Omega\Gamma_{,z}\psi_{1,zzz}r^{2\mu}.
\end{split}
\end{equation}
In view of \eqref{1.17}, the first integral on the left-hand side of (\ref{3.2}) equals
\begin{equation}
\begin{split}
-&\intop_\Omega\psi_{1,rrzz}\psi_{1,zz}r^{2\mu}=-\intop_\Omega(\psi_{1,rzz}\psi_{1,zz}r^{2\mu+1})_{,r}\d r\d z\\
&\quad+\intop_\Omega\psi_{1,rzz}^2r^{2\mu}+(2\mu+1)\intop_\Omega\psi_{1,rzz}\psi_{1,zz}r^{2\mu}\d r\d z.
\end{split}
\label{2.29}
\end{equation}
Since $\psi_1|_{r=R}=\psi_{1,r}|_{r=0}=0$ (by \eqref{1.17} and \eqref{1.20}), the first term on the right-hand side of  \eqref{2.29} vanishes. Integrating by parts with respect to $z$ in the last term on the left-hand side of (\ref{3.2}) and using \eqref{1.17}, it takes the form
\begin{equation}
-3\intop_\Omega\psi_{1,rzz}\psi_{1,zz}\d r\d z.
\label{2.30}
\end{equation}
Using (\ref{2.29}) and (\ref{2.30}) in (\ref{3.2}) yields
\begin{multline}\label{2.31}
\intop_\Omega(\psi_{1,zzz}^2+\psi_{1,rzz}^2)r^{2\mu}+2(\mu-1)\intop_\Omega\psi_{1,rzz}\psi_{1,zz}r^{2\mu}\d r\d z\\
=-\intop_\Omega\Gamma_{,z}\psi_{1,zzz}r^{2\mu}.
\end{multline}
The second term on the left-hand side of (\ref{2.31}) equals
\begin{multline}(\mu-1)\intop_\Omega\partial_r(\psi_{1,zz}^2)r^{2\mu}\d r\d z \\
=(\mu-1)\intop_\Omega\partial_r(\psi_{1,zz}^2r^{2\mu})\d r\d z+2\mu(1-\mu)\intop_\Omega\psi_{1,zz}^2r^{2\mu-1}\d r\d z,
\label{2.32}
\end{multline}
where the first integral vanishes because $\psi_{1,zz}|_{r=R}=0$ (recall (\ref{1.22}) )and $\psi_{1,zz}^2r^{2\mu}|_{r=0}=0$ (recall \eqref{1.22}). Using (\ref{2.32}) in (\ref{2.31}) and applying the H\"older and Young inequalities to the r.h.s. of (\ref{2.31}), we obtain (\ref{3.1}), as required.
\end{proof}

\subsection{Elliptic estimates for the modified stream function $\psi_1$}\label{s3.2}

We recall that the modified stream function $\psi_1$ is a solution to the problem \eqref{1.17},
\[
-\Delta\psi_1-{2\over r}\psi_{1,r}=\Gamma,\qquad \left. \psi_1 \right|_S=0.
\]
In this section we prove $H^2$ and $H^3$ elliptic estimates for $\psi_1$, in cylindrical coordinates.

\begin{lemma}[$H^2$ elliptic estimate on $\psi_1$, see Lemma 3.1 in \cite{Z1}]\label{l3.1}
If $\psi_1$ is a sufficiently regular solution to \eqref{1.17} then
\begin{equation}\label{psi1_h2}
\intop_\Omega \left( \psi_{1,rr}^2+\psi_{1,rz}^2+\psi_{1,zz}^2 + \frac{\psi_{1,r}^2}{r^2} \right) +\int_{-a}^a \left( \left.\psi_{1,z}^2\right|_{r=0} +  \left.\psi_{1,r}^2\right|_{r=R} \right) \d z \le c|\Gamma|_{2,\Omega}^2.
\end{equation}
\end{lemma}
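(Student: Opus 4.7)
The plan is to square the equation and integrate, then use integration by parts on the cross terms to recover the missing mixed derivative and produce the two boundary contributions. Writing \eqref{1.17} in axisymmetric cylindrical form gives
\[
-\bigl(\psi_{1,rr}+\tfrac{3}{r}\psi_{1,r}+\psi_{1,zz}\bigr)=\Gamma.
\]
Squaring, integrating over $\Omega$ with the axisymmetric measure $dx=2\pi r\,dr\,dz$, and expanding produces the three desired diagonal terms $\psi_{1,rr}^2$, $\psi_{1,zz}^2$, $9\psi_{1,r}^2/r^2$ immediately, plus three cross terms
\[
J_1=\!\int_\Omega \tfrac{6}{r}\psi_{1,rr}\psi_{1,r}\,dx,\quad J_2=\!\int_\Omega 2\psi_{1,rr}\psi_{1,zz}\,dx,\quad J_3=\!\int_\Omega \tfrac{6}{r}\psi_{1,r}\psi_{1,zz}\,dx,
\]
which I would handle by integration by parts.

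For $J_1$, the weight $r$ from the measure cancels the $1/r$, and I rewrite the integrand as $\partial_r(\psi_{1,r}^2)$, integrate in $r$, and use the Liu--Wang expansion \eqref{1.24} (so that $\psi_{1,r}|_{r=0}=0$) to reduce it to $6\pi\int_{-a}^{a}\psi_{1,r}^2|_{r=R}\,dz$. For $J_2$, I integrate by parts first in $z$ (the boundary terms at $z=\pm a$ vanish because $\psi_1|_{S_2}=0$ implies $\psi_{1,rr}|_{S_2}=0$) and then in $r$ (the term at $r=R$ vanishes because $\psi_1|_{r=R}=0$ implies $\psi_{1,z}|_{r=R}=0$, and the factor $r$ kills the axis term), which produces $2\int_\Omega \psi_{1,rz}^2\,dx$ plus a remainder $4\pi\int \psi_{1,rz}\psi_{1,z}\,dr\,dz$. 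For $J_3$, integrating by parts in $z$ and using $\psi_{1,r}|_{S_2}=0$ gives a similar remainder $-12\pi\int \psi_{1,rz}\psi_{1,z}\,dr\,dz$.

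Both remainder terms are of the form $\int \psi_{1,rz}\psi_{1,z}\,dr\,dz=\tfrac12\int \partial_r(\psi_{1,z}^2)\,dr\,dz$, which reduces to boundary pieces at $r=0$ and $r=R$; the term at $r=R$ vanishes since $\psi_{1,z}|_{r=R}=0$, leaving only a contribution of definite sign at $r=0$. Tallying all three cross terms, the coefficient of $\int_{-a}^a\psi_{1,z}^2|_{r=0}\,dz$ comes out positive (the $+6\pi$ from $J_3$ dominates the $-2\pi$ from $J_2$), and together with the positive piece of $J_1$ at $r=R$ we obtain
\[
|\Gamma|_{2,\Omega}^2=\int_\Omega\bigl(\psi_{1,rr}^2+2\psi_{1,rz}^2+\psi_{1,zz}^2+\tfrac{9\psi_{1,r}^2}{r^2}\bigr)dx+6\pi\!\int_{-a}^{a}\!\psi_{1,r}^2|_{r=R}\,dz+4\pi\!\int_{-a}^{a}\!\psi_{1,z}^2|_{r=0}\,dz.
\]
Every term on the right is nonnegative, so \eqref{psi1_h2} follows at once.

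The main obstacle is purely bookkeeping: three cross terms each generate multiple boundary contributions, and one must verify that after invoking the Dirichlet condition on $S=S_1\cup S_2$ (which forces $\psi_{1,r}|_{S_2}=\psi_{1,z}|_{S_1}=0$) and the Liu--Wang expansion on the axis (which forces $\psi_{1,r}|_{r=0}=0$), the coefficients and signs of the surviving axis/lateral contributions conspire to be nonnegative. If even one sign came out wrong the argument would collapse, so the critical check is the arithmetic of the $\psi_{1,z}^2|_{r=0}$ coefficient arising from $J_2$ and $J_3$.
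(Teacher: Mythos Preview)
Your argument is correct and in fact gives an exact identity rather than just an inequality, but it is not the route the paper takes. The paper proceeds in two stages with separate multipliers: first it multiplies \eqref{1.17} by $\psi_{1,zz}$ and integrates by parts to obtain control of $\psi_{1,rz}^2+\psi_{1,zz}^2$ together with the axis term $\int_{-a}^a\psi_{1,z}^2|_{r=0}\,dz$; then it multiplies \eqref{1.17} by $\psi_{1,r}/r$ to control $\psi_{1,r}^2/r^2$ and the lateral term $\int_{-a}^a\psi_{1,r}^2|_{r=R}\,dz$; finally it recovers $\psi_{1,rr}^2$ from the equation itself. Along the way the paper uses Young's inequality to absorb the cross terms, so the resulting constant is not sharp. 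Your squaring approach produces all second-order terms and both boundary pieces simultaneously, with explicit constants and no absorption argument needed; the price is that you must track three cross terms at once and check (as you correctly do) that the net coefficient of $\psi_{1,z}^2|_{r=0}$ comes out positive. Both arguments rest on exactly the same boundary and expansion facts ($\psi_{1,r}|_{r=0}=0$ from \eqref{1.24}, $\psi_{1,z}|_{r=R}=\psi_{1,r}|_{S_2}=0$ from the Dirichlet condition), so neither requires more input than the other.
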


\begin{proof}
We multiply \eqref{1.17} by $\psi_{1,zz}$ and integrate over $\Omega$ to obtain
\begin{equation}
-\intop_\Omega \left( \psi_{1,rr}\psi_{1,zz}+ \psi_{1,zz}^2 + 3 \frac{ \psi_{1,r}}r \psi_{1,zz}\right) =\intop_\Omega\Gamma\psi_{1,zz}.
\label{3.3}
\end{equation}
Integrating by parts with respect to $r$ in the first term gives
\[
\begin{split}
-&\intop_\Omega(\psi_{1,r}\psi_{1,zz}r)_{,r}\d r\d z+\int_\Omega\psi_{1,r}\psi_{1,zzr} + \int_\Omega\psi_{1,r}\psi_{1,zz}\d r\d z\\
&-\intop_\Omega  \psi_{1,zz}^2  - 3\intop_\Omega\psi_{1,r}\psi_{1,zz}\d r\d z=\intop_\Omega\Gamma \psi_{1,zz}.
\end{split}\]
Thus
\begin{multline}\label{3.4}
-\int_{-a}^a \left[ \psi_{1,r}\psi_{1,zz}r\right]_{r=0}^{r=R}\d z+\intop_\Omega\psi_{1,r}\psi_{1,zzr}-\int_\Omega\psi_{1,zz}^2 \\-2\intop_\Omega\psi_{1,r}\psi_{1,zz}\d r\d z
= \intop_\Omega\Gamma\psi_{1,zz}.
\end{multline}
We note that the the first integral vanishes since $\psi_{1,r}|_{r=0}=0$ (recall expansion \eqref{1.23}) and $\psi_{1,zz}|_{r=R}=0$. We now integrate by parts with respect to $z$ in the second and the last terms on the left-hand side and use that $\psi_{1,r}|_{S_2}=0$ (since $\psi_1|_{S}=0$, recall \eqref{1.17}), and we multiply by $-1$, to   obtain
\begin{equation}\label{3.7}
\intop_\Omega(\psi_{1,zr}^2+\psi_{1,zz}^2) -2\intop_\Omega\psi_{1,rz}\psi_{1,z}\d r\d z =-\intop_\Omega\Gamma \psi_{1,zz}.
\end{equation}
We note that the last term on the left-hand side equals
\[
-\intop_\Omega (\psi_{1,z}^2)_{,r} \d r \d z = -\int_{-a}^a\left[ \psi_{1,z}^2 \right]_{r=0}^{r=R}\d z=\intop_{-a}^a\left. \psi_{1,z}^2\right|_{r=0}\d z,
\]
since $\psi_{1,z}|_{r=R}=0$. Applying this in \eqref{3.7}, and using the Young inequality to absorb $\psi_{1,zz}$ by the left-hand side, we obtain
\begin{equation}
\intop_\Omega \left( \psi_{1,rz}^2+\psi_{1,zz}^2\right) +\intop_{-a}^a\left. \psi_{1,z}^2\right|_{r=0}\d z\le c|\Gamma|_{2,\Omega}^2.
\label{3.8}
\end{equation}
We now multiply \rf{1.17}$ _1$ by $\psi_{1,r}/r$ and integrate over $\Omega$ to obtain
\begin{equation}\label{3.9} 3\intop_\Omega \frac{\psi_{1,r}^2}{r^2} = -\intop_\Omega\left( \psi_{1,rr}\frac{\psi_{1,r}}r  + \psi_{1,zz}\frac{\psi_{1,r}}r + \Gamma\frac{ \psi_{1,r}}r \right).
\end{equation}
The first term on the right-hand side equals
\[
-{1\over 2}\intop_\Omega\partial_r\psi_{1,r}^2\d r\d z=-{1\over 2}\intop_{-a}^a \left[ \psi_{1,r}^2\right]_{r=0}^{r=R}\d z=-{1\over 2}\intop_{-a}^a\left. \psi_{1,r}^2\right|_{r=R}\d z.
\]
where we used that $\psi_{1,r}|_{r=0}=0$ (recall expansion \eqref{1.23}) in the last equality. As for the other terms on the right-hand side of \eqref{3.9} we apply Young's inequality to absorb $\psi_{1,r}/r$ by the left-hand side. We obtain
\[\intop_\Omega\frac{\psi_{1,r}^2}{r^2} +{1\over 2}\intop_{-a}^a \left. \psi_{1,r}^2\right|_{r=R}\d z\les c\ll(|\psi_{1,zz}|_{2,\Omega}^2+|\Gamma |_{2,\Omega}^2\rr).
\]
The claim \eqref{psi1_h2} follows from this, \eqref{3.8}, and from the equation $\eqref{1.17}_1$ for $\psi_1$, which lets us estimate $\psi_{1,rr}$ in terms of $\psi_{1,zz}$, $\psi_{1,r}/r$.
\end{proof}

\begin{lemma}[$H^3$ elliptic estimates on $\psi_1$]\label{l3.2}
If $\psi_1$ is a sufficiently regular solution to \eqref{1.17} then
\begin{equation}
\intop_\Omega(\psi_{1,zzr}^2+\psi_{1,zzz}^2)+\intop_{-a}^a\psi_{1,zz}^2\bigg|_{r=0}\d z\le c|\Gamma_{,z}|_{2,\Omega}^2
\label{3.13}
\end{equation}
and
\begin{multline}
\intop_\Omega(\psi_{1,rrz}^2+\psi_{1,rzz}^2+\psi_{1,zzz}^2)+ \intop_{-a}^a\psi_{1,zz}^2\bigg|_{r=0}\d z+\intop_{-a}^a \psi_{1,rz}^2\bigg|_{r=R}\d z\cr
\le c|\Gamma_{,z}|_{2,\Omega}^2.\\
\label{3.14}    
\end{multline}
as well as
\begin{equation}
\bigg|{1\over r}\psi_{1,rz}\bigg|_{2,\Omega}\le c|\Gamma_{,z}|_{2,\Omega}
\label{3.24}
\end{equation}
\end{lemma}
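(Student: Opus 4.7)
The plan is to differentiate $\eqref{1.17}_1$ in $z$, giving
\[
-\psi_{1,rrz}-\fr{3}{r}\psi_{1,rz}-\psi_{1,zzz}=\Gamma_{,z},
\]
and then to mimic the scheme of Lemma~\ref{l3.1} with three successive test functions: $\psi_{1,zzz}$ for \eqref{3.13}, $\psi_{1,rrz}$ for \eqref{3.14}, and $\psi_{1,rz}/r$ for \eqref{3.24}. The boundary contributions produced by the integrations by parts will be eliminated by combining $\psi_1|_S=0$ (which in particular yields $\psi_{1,zz}|_{r=R}=\psi_{1,zzz}|_{r=R}=0$ and $\psi_{1,r}|_{S_2}=0$), the extra condition \rf{1.19}, $\psi_{1,zz}|_{S_2}=0$, which after tangential differentiation along $S_2$ also supplies $\psi_{1,zzr}|_{S_2}=0$, and the Liu--Wang expansions \rf{1.23}--\rf{1.24}, which give $\psi_{1,r}|_{r=0}=\psi_{1,rz}|_{r=0}=0$.

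For \eqref{3.13} I would first integrate by parts in $r$ to trade $\psi_{1,rrz}\psi_{1,zzz}$ for $\psi_{1,rz}\psi_{1,rzz}$ (the $r=R$ boundary vanishes by $\psi_{1,zzz}|_{r=R}=0$, and $r=0$ by the measure weight), and then integrate by parts in $z$ to produce $\psi_{1,rzz}^2$ (with the $S_2$ boundary killed by $\psi_{1,zzr}|_{S_2}=0$). The residual unweighted integral $\int_0^R\!\int_{-a}^a\psi_{1,rz}\psi_{1,zz}\,\d r\,\d z=\tfrac12\int_0^R\!\int_{-a}^a\partial_r(\psi_{1,zz}^2)\,\d r\,\d z$ then leaves, after the $r$-integration and using $\psi_{1,zz}|_{r=R}=0$, exactly the trace $\int_{-a}^a\psi_{1,zz}^2|_{r=0}\,\d z$. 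Young's inequality absorbs $\psi_{1,zzz}$ on the right and delivers \eqref{3.13}.

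For \eqref{3.14} I would test with $\psi_{1,rrz}$. The only non-routine term is
\[
3\io\fr{\psi_{1,rz}\psi_{1,rrz}}{r}\,\d x=\fr32\intop_{-a}^a\psi_{1,rz}^2\big|_{r=R}\,\d z\ge 0,
\]
the $r=0$ piece vanishing by \rf{1.24}. The coupling terms involving $\psi_{1,zzz}$ and $\Gamma_{,z}$ are absorbed by Young, and \eqref{3.13} controls $\psi_{1,zzz}$ by $\Gamma_{,z}$, completing \eqref{3.14}. For \eqref{3.24} I would test with $\psi_{1,rz}/r$; once more $\io\fr{\psi_{1,rrz}\psi_{1,rz}}{r}\,\d x$ collapses to a nonnegative boundary contribution at $r=R$, while the couplings with $\psi_{1,zzz}$ and $\Gamma_{,z}$ are handled via the weighted pairing
\[
\Bigl|\int_0^R\!\!\int_{-a}^a AB\,\d r\,\d z\Bigr|\le\Bigl(\int_0^R\!\!\int_{-a}^a A^2 r\,\d r\,\d z\Bigr)^{1/2}\Bigl(\int_0^R\!\!\int_{-a}^a B^2/r\,\d r\,\d z\Bigr)^{1/2},
\]
which pairs the weighted $\psi_{1,zzz}$ (and $\Gamma_{,z}$) against $\psi_{1,rz}/r$; Young lets me absorb $|\psi_{1,rz}/r|_{2,\Omega}^2$ on the left, and \eqref{3.13} bounds the residual by $|\Gamma_{,z}|_{2,\Omega}^2$.

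The main obstacle I anticipate is not any single estimate but the disciplined bookkeeping of boundary terms across three different multipliers --- a naive choice (e.g.\ testing \eqref{3.24} with $\psi_{1,rz}$ rather than $\psi_{1,rz}/r$) fails to close, and omitting the extra relation \rf{1.19} leaves an uncontrollable $S_2$-boundary piece in the $z$-IBP for \eqref{3.13}. Once one recognises which multiplier pairs with which vanishing --- in particular that \rf{1.19} is precisely what allows the $z$-differentiated version of Lemma~\ref{l3.1} to proceed, and that \rf{1.24} is what gives the $r=R$ trace a favourable sign --- the three estimates follow in the stated order.
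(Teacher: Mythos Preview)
Your plan is correct and essentially matches the paper's proof: both test the $z$-differentiated equation against $-\psi_{1,zzz}$ for \eqref{3.13} and against $\psi_{1,rrz}$ for \eqref{3.14}, using the same boundary bookkeeping via $\psi_{1,zz}|_{S_2}=0$ from \rf{1.19} and the expansions \rf{1.23}--\rf{1.24}. The only difference worth noting concerns \eqref{3.24}: rather than introducing a third multiplier $\psi_{1,rz}/r$ and a weighted Cauchy--Schwarz pairing, the paper simply rearranges the $z$-differentiated equation as $\tfrac{3}{r}\psi_{1,rz}=-\Gamma_{,z}-\psi_{1,rrz}-\psi_{1,zzz}$ and applies the triangle inequality together with \eqref{3.14}, which is shorter (your route also works).
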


\begin{proof}
First we show (\ref{3.13}). We differentiate $\eqref{1.17}_1$ with respect to $z$, multiply by $-\psi_{1,zzz}$ and integrate over $\Omega$ to obtain
\begin{multline}\label{3.15}
\intop_\Omega\psi_{1,rrz}\psi_{1,zzz} +\intop_\Omega\psi_{1,zzz}^2+3\intop_\Omega{1\over r}\psi_{1,rz}\psi_{1,zzz}=-\intop_\Omega\Gamma_{,z}\psi_{1,zzz}.
\end{multline}
Integrating by parts with respect to $z$ in the first term yields
\begin{equation} \label{3.16}
\intop_\Omega\psi_{1,rrz}\psi_{1,zzz}= \intop_\Omega(\psi_{1,rrz}\psi_{1,zz})_{,z}- \intop_\Omega\psi_{1,rrzz}\psi_{1,zz},
\end{equation}
where the first term vanishes due to \eqref{1.17}. Integrating the last integral in  \eqref{3.16} by parts with respect to $r$  gives
$$
-\intop_\Omega(\psi_{1,rzz}\psi_{1,zz}r)_{,r}\d r\d z+\intop_\Omega\psi_{1,rzz}^2+ \intop_\Omega\psi_{1,rzz}\psi_{1,zz}\d r\d z,
$$
where the first integral vanishes, since $\psi_{1,rzz}|_{r=0}=\psi_{1,zz}|_{r=R}=0$ (recall \eqref{1.23} and \eqref{1.17}).
Thus, (\ref{3.15}) becomes
\begin{multline}\label{3.17}
\intop_\Omega(\psi_{1,rzz}^2+\psi_{1,zzz}^2)+\intop_\Omega\left( \psi_{1,rzz}\psi_{1,zz} + 3\psi_{1,rz}\psi_{1,zzz}\right) \d r\d z\\=-\intop_\Omega\Gamma_{,z}\psi_{1,zzz}.
\end{multline}
Integrating by parts with respect to $z$ in the last term on the left-hand side of \eqref{3.17} and using that $\psi_{1,zz}|_{S_2}=0$ (recall \eqref{1.17}) we get
\begin{equation}\label{3.18}
\intop_\Omega(\psi_{1,rzz}^2+\psi_{1,zzz}^2)-\intop_\Omega\partial_r\psi_{1,zz}^2\d r\d z= -\intop_\Omega\Gamma_{,z}\psi_{1,zzz}.
\end{equation}
Recalling \eqref{1.17} that  $\psi_{1,zz}|_{r=R}=0$, and using Young's inequality to absorb $\psi_{1,zzz}$ we obtain
\[
\intop_\Omega \left( \psi_{1,rzz}^2+\psi_{1,zzz}^2\right) +\intop_{-a}^a\left. \psi_{1,zz}^2\right|_{r=0}\d z\les c |\Gamma_{,z}|_{2,\Omega}^2.
\]
which gives \eqref{3.13}.

As for \eqref{3.14}, we differentiate \rf{1.17}$_1$ with respect to $z$, multiply by $\psi_{1,rrz}$ and integrate over $\Omega$ to obtain
\begin{equation}\label{3.19}
-\intop_\Omega \left( \psi_{1,rrz}^2+ \psi_{1,zzz}\psi_{1,rrz}+3{1\over r}\psi_{1,rz}\psi_{1,rrz}\right) =\intop_\Omega\Gamma_{,z}\psi_{1,rrz}.
\end{equation}
We integrate the second term on the left-hand side by parts in $z$, and recall \eqref{1.17} that  $\psi_{1,zz}|_{S_2}=0$, to get

\begin{multline*}
-\intop_\Omega\psi_{1,zzz}\psi_{1,rrz}=\intop_\Omega\psi_{1,zz}\psi_{1,rrzz}\\
=\intop_\Omega(\psi_{1,zz}\psi_{1,rzz}r)_{,r}\d r\d z-\intop_\Omega\psi_{1,rzz}^2- \intop_\Omega\psi_{1,zz}\psi_{1,rzz}\d r\d z.
\end{multline*}
We note that the first term on the right-hand side vanishes since $\psi_{1,rzz}|_{r=0}=0$ (recall \eqref{1.23}) and $\psi_{1,zz}|_{r=R}=0$ (recall \eqref{1.17}), and so \eqref{3.19} becomes
\begin{multline}\label{3.21}
\intop_\Omega \left( \psi_{1,rrz}^2+\psi_{1,rzz}^2 \right)+\intop_\Omega \left( \psi_{1,zz}\psi_{1,rzz}+3 \psi_{1,rz}\psi_{1,rrz} \right) \d r\d z \\=
-\intop_\Omega\Gamma_{,z}\psi_{1,rrz}.
\end{multline}
Since the second term above equals
\[
{1\over 2}\intop_{-a}^a\left[ \psi_{1,zz}^2\right]_{r=0}^{r=R}\d z=-{1\over 2}\intop_{-a}^a\left. \psi_{1,zz}^2\right|_{r=0}\d z
\]
(as $\psi_{1,zz}|_{r=R}=0$, recall \eqref{1.17}), and the last term on the left-hand side of \eqref{3.21} equals
\[
\frac32 \intop_\Omega\partial_r\psi_{1,rz}^2\d r\d z=\frac32 \intop_{-a}^a\left[ \psi_{1,rz}^2\right]_{r=0}^{r=R}\d z=\frac32 \intop_{-a}^a\left. \psi_{1,rz}^2\right|_{r=R}\d z
\]
(as $\psi_{1,rz}|_{r=0}=0$, recall \eqref{1.23}), \eqref{3.21} becomes
\begin{multline}\label{3.22}
\intop_\Omega(\psi_{1,rrz}^2+\psi_{1,rzz}^2)+ \intop_{-a}^a \left( -\frac12 \left. \psi_{1,zz}^2\right|_{r=0} + \frac32 \left. \psi_{1,rz}^2\right|_{r=R} \right) \d z\\=-\intop_\Omega\Gamma_{,z}\psi_{1,rrz}.
\end{multline}
We now use Young's inequality to absorb $\psi_{1,rrz}$ by the left-hand side to obtain \eqref{3.14}, which in turn implies \eqref{3.24} by differentiating $\eqref{1.17}_1$ in $z$.
\end{proof}

\section{Energy Estimates for $\Phi$ and $\G$}\label{s4}
\begin{lemma}\label{lemma 4.1}(Energy Estimate for $\Phi,\G$).
Let $\t_*,\t^*,\t_*\le\t^*$be given positive numbers defined in Lemma \ref{lemma 2.2}.
Let $\t$ be a solution to \rf{1.2},\rf{1.3}$_3$,\rf{1.4}$_2$ such that $\t_*\le \t\le\t^*$ and $\nabla \t\in L_2(\Ot)$.
If $v$ is regular solution to     \rf{1.1},\rf{1.3}$_{1,2}$,\rf{1.4}$_1$ such that $v_\vp\in L_\infty(\Ot)$   for $t\in (0,T)$,  then,  for every $t\in (0,T)$ ,
\begin{equation}\label{4.1}
    \begin{split}
        D_2^2\|\G\|_{V(\Ot)}+\|\Phi\|_{V(\Ot)} &\le cD_2^2\left(1+{|v_\vp|_{\infty,\Ot}^{2\ve_0}R^{2\ve_0} \over \ve_0^2D_2^{2\ve_0}}\right)\cdot \\&\cdot \left (I+B_1^2|\nabla\t|+D_3^2\right),
    \end{split}
\end{equation} 
where 
\[
I=\left |\iot {v_\vp \over r}\Phi\G dx dt'\right|.
\]
Moreover, if $v_\vp\in L_\infty(0,t;L_d(\O))$ for some $d>3$ and $\ve_1,\ve_2>0$ are sufficiently small such that
\[
\begin{split}
   & \theta_0\:= \left(1-{3\over d}\right)\varepsilon_1-{3\over d}\varepsilon_2>0, \quad 1+\frac{\varepsilon_2}{\varepsilon_1} < \frac{d}3,\\
&\t_0<1 \quad\text{ implies } \quad \ve_1\left(1-{3 \over d}\right)<1+{3 \over d}\ve_2,
\end{split}
\]
then
\begin{equation}\label{4.2}
    I \le cD_2^{1-\varepsilon}|v_\varphi|_{d,\infty,\Omega^t}^\varepsilon{R^{\varepsilon_2}\over\varepsilon_2} |\Phi|_{2,\Omega^t}^{\theta_0}\| \Phi \|_{V(\Omega^t )}^{1-\theta_0}\| \Gamma \|_{V(\Omega^t )},
\end{equation}
where $\varepsilon:=\varepsilon_1+\varepsilon_2$.  
Finally, we have 
\[
\begin{split}
   & B_1^2={1 \over \nu}
\phi^2(\t_*,\t^*)|\bar f|_{3,\infty,\Ot}\\
&D_3^2=\phi^2(\t_*,\t^*)\left(|\bar F_r|_{6/5,2,\Ot}^2+|\bar F_\vp|_{6/5,2,\Ot}^2\right .\\
&\left .\qquad+ D_2^2|\G(0)|_{2,\O}^2+|\Phi(0)|_{2,\O}^2\right),
\end{split}
\]
where $\bar f=f/r,\;\bar F=F/r.$
\end{lemma}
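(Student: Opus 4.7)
The plan is to derive two energy identities --- one for $\Phi$ by testing \rf{1.11} with $\Phi$, one for $\Gamma$ by testing \rf{1.12} with $\Gamma$ --- and then to combine them with a weight $D_2^2$ on the $\Gamma$-equation before integrating in time. Since $\divv v=0$ and $v\cdot\bar n|_S=0$, the convective terms $\io v\cdot\nabla\Phi\,\Phi\,dx$ and $\io v\cdot\nabla\Gamma\,\Gamma\,dx$ vanish. Using $\Phi|_S=\Gamma|_S=0$ together with the axisymmetric integration-by-parts identity
\[
\io\ll[-\ll(\Delta+\fr{2}{r}\partial_r\rr)w\rr]w\,dx=|\nabla w|_{2,\Omega}^2+2\pi\intop_{-a}^a w^2\big|_{r=0}\,dz\ge |\nabla w|_{2,\Omega}^2
\]
gives coercivity, producing
\[
\tfrac12\dt|\Phi|_{2,\Omega}^2+\nu|\nabla\Phi|_{2,\Omega}^2=\io(\omega_r\partial_r+\omega_z\partial_z)\fr{v_r}{r}\Phi\,dx+\io(-\dot\alpha\t_{,z}\bar f_\vp+\alpha\bar F_r)\Phi\,dx,
\]
together with the analogous identity for $\Gamma$ containing the extra cross-coupling $-2\io(\vphi/r)\Phi\Gamma\,dx$ on the left.

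Next, I would handle the vortex-stretching term. Using \rf{1.18} to write $v_r/r=-\psi_{1,z}$, $\omega_r=r\Phi$, $\omega_z=u_{,r}/r$, it equals
\[
-\io r\Phi^2\psi_{1,rz}\,dx-\io\fr{u_{,r}}{r}\psi_{1,zz}\Phi\,dx,
\]
and both summands are controlled by H\"older together with the $H^2$-elliptic bound $|\psi_{1,rz}|_{2,\O},|\psi_{1,zz}|_{2,\O}\lec|\Gamma|_{2,\O}$ from Lemma \ref{l3.1} and the swirl maximum principle $|u|_{\iy,\Ot}\le D_2$. The weight $1+|\vphi|_{\iy,\Ot}^{2\ve_0}R^{2\ve_0}/(\ve_0^2 D_2^{2\ve_0})$ appearing in \rf{4.1} is generated at this step by a Hardy-type interpolation (Lemma \ref{lemma 2.9}), used to trade a power of $1/r$ for a small power of $|\vphi|_\iy$ and $R$ in the part that cannot be absorbed into the viscous term. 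The $\dot\alpha$-temperature terms are then estimated by H\"older with $\dot\alpha\le\phi(\t_*,\t^*)$ and $\bar f\in L_3$, which after integration in time produces the contribution $B_1^2|\nabla\t|^2_{2,\Ot}$; the $\alpha\bar F$-terms are estimated by the $L^{6/5}$--$L^6$ duality combined with $H^1(\O)\hookrightarrow L^6(\O)$ on $\Phi,\Gamma$ and Young's inequality. Together with $|\Phi(0)|_{2,\O}$ and $D_2|\Gamma(0)|_{2,\O}$ from the time-integration of initial data, these terms build the constant $D_3^2$.

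The main obstacle is the cross-coupling
\[
I=\ll|\iot\fr{\vphi}{r}\Phi\Gamma\,dx\,dt'\rr|.
\]
I would write $\vphi/r=\vphi^{1-\ve}(\vphi/r)^\ve$ with $\ve=\ve_1+\ve_2$ and use $\vphi/r=u/r^2$ with $|u|_{\iy}\le D_2$; then H\"older in space with three exponents adapted to $\vphi\in L_d$ and to weighted functions $\Phi/r^{s_1}$, $\Gamma/r^{s_2}$ with $s_1+s_2<2$, followed by Lemma \ref{lemma 2.9} (Hardy interpolation) applied separately to $\Phi$ and $\Gamma$, converts the $r$-weights into an interpolation between $L^2$ and $H^1$. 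Balancing the resulting powers via the conditions $\ve_1(1-3/d)<1+3\ve_2/d$ and $1+\ve_2/\ve_1<d/3$ produces an interior exponent $\t_0=(1-3/d)\ve_1-3\ve_2/d\in(0,1)$ and yields
\[
I\lec D_2^{1-\ve}|\vphi|_{d,\iy,\Ot}^\ve\fr{R^{\ve_2}}{\ve_2}|\Phi|_{2,\Ot}^{\t_0}\|\Phi\|_{V(\Ot)}^{1-\t_0}\|\Gamma\|_{V(\Ot)}.
\]
The condition $d>3$ is precisely what guarantees that an admissible pair $(\ve_1,\ve_2)$ exists; reconciling the $1/r$ singularity with the need to extract an interior power $\t_0$ is the central technical hurdle.

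Finally, to close the estimate, I would multiply the $\Gamma$-identity by $D_2^2$, add it to the $\Phi$-identity, and use Young's inequality to absorb the positive $\nu|\nabla\Phi|^2_{2,\O}+\nu D_2^2|\nabla\Gamma|^2_{2,\O}$ on the right. Taking the supremum in time of $|\Phi|_{2,\O}^2+D_2^2|\Gamma|_{2,\O}^2$ and integrating from $0$ to $t$ produces \rf{4.1}.
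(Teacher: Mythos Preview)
Your handling of the vortex–stretching term $\io(\omega_r\partial_r+\omega_z\partial_z)(v_r/r)\,\Phi\,dx$ is where the argument breaks. Writing it directly as $-\io r\Phi^2\psi_{1,rz}\,dx-\io (u_{,r}/r)\psi_{1,zz}\Phi\,dx$ and appealing only to the $H^2$ bound $|\psi_{1,rz}|_{2,\O}+|\psi_{1,zz}|_{2,\O}\le c|\G|_{2,\O}$ from Lemma~\ref{l3.1} does not close. In the first summand the factor $r\Phi=\omega_r$ is not controlled in $L^\infty$ (the swirl maximum principle gives $|u|_\infty$, not $|\omega_r|_\infty$ or $|u_{,r}|_\infty$), so any H\"older splitting leaves you with a product that is at least cubic in the $V$–norms of $\Phi,\G$; for instance $R|\Phi|_{4,\O}^2|\G|_{2,\O}$ leads after Young and time–integration to a term of order $X^6$, which cannot be absorbed. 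The second summand has the same defect through $u_{,r}/r=\omega_z$. Moreover, the $H^2$ estimate only produces $|\G|_{2,\O}$ on the right, which is not a dissipation term and cannot be absorbed by the $\G$–equation.

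The paper avoids this by first integrating by parts, using $\omega_r=-v_{\varphi,z}$ and $\omega_z=(rv_\varphi)_{,r}/r$: the two resulting $(v_r/r)_{,rz}\Phi$ contributions cancel exactly, and what remains is
\[
I_1=-\io v_\varphi\big[\psi_{1,rz}\Phi_{,z}-\psi_{1,zz}\Phi_{,r}\big]\,dx.
\]
Now $v_\varphi$ appears undifferentiated, so the swirl bound enters via $|rv_\varphi|_\infty\le D_2$ and $|r^{1-\ve_0}v_\varphi|_\infty\le D_2^{1-\ve_0}|v_\varphi|_\infty^{\ve_0}$. Crucially, one needs the $H^3$ elliptic estimates of Lemma~\ref{l3.2} (inequalities \rf{3.13}, \rf{3.24}) together with Hardy's inequality \rf{2.19} to bound $|\psi_{1,rz}/r|_{2,\O}$ and $|\psi_{1,zz}/r^{1-\ve_0}|_{2,\O}$ by $c|\G_{,z}|_{2,\O}$. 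This yields $|I_1|\le cD_2(1+|v_\varphi|_\infty^{\ve_0}R^{\ve_0}/(\ve_0 D_2^{\ve_0}))|\nabla\Phi|_{2,\O}|\nabla\G|_{2,\O}$, and after Young the right–hand side is $\frac{\nu}{2}|\nabla\Phi|_{2,\O}^2+\frac{c}{\nu}D_2^2(1+\ldots)^2|\nabla\G|_{2,\O}^2$, which is precisely why the $\G$–identity must be multiplied by $cD_2^2(1+\ldots)$ before adding --- so that its viscous term $\nu D_2^2|\nabla\G|_{2,\O}^2$ absorbs this contribution. The weight $1+|v_\varphi|_{\infty,\Ot}^{2\ve_0}R^{2\ve_0}/(\ve_0^2 D_2^{2\ve_0})$ in \rf{4.1} thus originates from this integration–by–parts structure and the need for Hardy on $\psi_{1,zz}/r^{1-\ve_0}$, not from Lemma~\ref{lemma 2.9}. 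Your treatment of the coupling term $I$ and of the forcing terms is broadly along the right lines (though your splitting $v_\varphi/r=v_\varphi^{1-\ve}(v_\varphi/r)^\ve$ is algebraically off; the correct identity is $v_\varphi/r=(rv_\varphi)^{1-\ve}v_\varphi^\ve/r^{2-\ve}$, after which Hardy handles the $\G$–factor and Lemma~\ref{lemma 2.9} the $\Phi$–factor).
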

\begin{proof}
    We multiply \eqref{1.11} by $\Phi$ and integrate over $\Omega$ to obtain
\begin{equation}\label{4.3}
    \begin{split}
        \fr{1}{2}\dt&\lp{\Phi}{2,\O}{2}+\nu\lp{\nb \Phi}{2,\O}{2}-\nu \intop_{-a}^a\Phi^2\bigg|_{r=0}^{r=R}dz\\
   & =\io(\o_r  \pa_r +\o_z\pa_z)\fr{v_r}{r}  \Phi dx\\
   &+\io\dot \al\t_{,z}\bar f_{\vp}\Phi dx +\io\al \bar F_r\Phi dx
    \end{split}
\end{equation}   
where the last term on the l.h.s. equals $\intop_{-a}^a\Phi^2|_{r=0}d z$, due to \eqref{1.6}, \rf{1.13}.  
Recalling \rf{1.5} we can integrate in the first term on the r.h.s.  of \rf{4.3} by parts 
\[
\begin{split}
\intop_\Omega&(\omega_r\partial_r+\omega_z\partial_z){v_r\over r}\Phi =\intop_\Omega\left[] -v_{\varphi,z}\left( \frac{v_r}{r} \right)_{,r}+\frac{(rv_\varphi)_{,r}}{r^2}v_{r,z} \right] \Phi  r dr dz\\
&=\intop_\Omega v_\varphi\left(\left(\frac{v_r}{r}\right)_{,rz}\Phi+\left( \frac{v_r}{r} \right)_{,r} \Phi_{,z} \right)-\intop_\Omega v_\varphi\bigg(\left(\frac{v_r}{r}\right)_{,rz}\Phi+\left( \frac{v_r}{r} \right)_{,z} \Phi_{,r}\bigg)\\
&=-\intop_\Omega v_\varphi\left[ \psi_{1,zr} \Phi_{,z}-\psi_{1,zz} \Phi_{,r}\right]\equiv I_1,
\end{split}
\]
where in the second equality we used that $\Phi|_{S_2}=0$ (recall \rf{1.13}) and
\[
\intop_{-a}^a\left[ rv_\varphi\partial_z{v_r\over r}\Phi\right]_{r=0}^{r=R}dz=0
\]
because $v_\vp|_{r=R}=0$  by \rf{1.3} and $rv_\varphi\left({v_r\over r}\right)_{,z}\Phi|_{r=0}=0$ by \rf{1.20},\rf{1.21}. \linebreak Using \rf{3.14},\rf{3.24} we get 
\[
\begin{split}
&I_1=-\intop_\Omega v_\varphi\left( \psi_{1,zr} \Phi_{,z}-\psi_{1,zz} \Phi_{,r}\right)dx\\
&\leq \intop_{\Omega} \left| rv_{\varphi } \frac{\psi_{1,rz} }r \Phi_{,z} \right|dx  + \intop_{\Omega} \left| r^{1-\ve_0 } v_{\varphi } \frac{\psi_{1,zz} }{r^{1-\ve_0 }} \Phi_{,r} \right|dx  \\
&\leq |rv_\varphi|_{\infty,\Omega}\bigg|{\psi_{1,rz}\over r}\bigg|_{2,\Omega}|\Phi_{,z}|_{2,\Omega}+|r^{1-\ve_0 } v_\varphi|_{\infty,\Omega}\left|\frac{\psi_{1,zz}}{r^{1-\ve_0}}\right|_{2,\Omega}|\Phi_{,r}|_{2,\Omega} \\
&\le c D_2|\nabla \Phi |_{2,\Omega}\left( |\Gamma_{,z}|_{2,\Omega} +\frac{|v_\varphi|_{\infty,\Omega}^{\ve_0 }}{\ve_0  D_2^{\ve_0 }} |\psi_{1,zzr}r^{\ve_0}|_{2,\Omega} \right) \\
&\le c D_2|\nabla \Phi |_{2,\Omega}|\nabla \Gamma |_{2,\Omega} \left(1+\frac{|v_\varphi|_{\infty,\Omega}^{\ve_0 }R^{\ve_0 }}{\ve_0   D_2^{\ve_0}} \right).
\end{split}
\]
In the second inequality we used the maximum principle  \rf{2.16} and the Hardy inequality  \rf{2.19}, and \eqref{3.13},\eqref{3.24} in the third and fourth inequalities.
Employing Lemma \ref{lemma 2.2} and assuming the  existence of a function $\phi$ such that 
\[
|\dot \al(\t)|+ |\al(\t)|\le \phi(\t_*,\t^*)
\]
we obtain
\[
\left|\io \dot \al \t_{,z} \bar f_\vp \Phi dx \right|\le \delta |\Phi|_{6,\O}^2 +c(1/\delta)\phi^2(\t_*,\t^*)|\t_{,z}|_{2,\O}^2|\bar f_\vp|_{3,\O}^2.
\]
and
\[
\left|\io \al \bar F_r \Phi dx \right|\le \delta \Phi|_{6,\O}^2 +c(1/\delta)\phi^2(\t_*,\t^*)|\bar F_r|_{6/5,\O}
\]
Using the above estimates in \rf{4.3} gives
\begin{equation}\label{4.4} 
\begin{split}
    \dt&|\Phi|_{2,\O}^2+\nu|\nb\Phi|_{2,\O}^2\le {c \over \nu}D_2^2 \lp{\nb \G}{2,\O}{2}\ll(1+{\lp{v_\vp}{\infty,\O}{2\ve_0}R^{2\ve_0}\over \ve_0D_2^{2\ve_0}}\rr)\\
   &+ {c\over \nu}\phi^2(\t_*,\t^*)\lp{\t_{,z}}{2,\O}{2}\lp{\bar f_\vp}{3,\O}{2}+   {c\over \nu}\phi^2(\t_*,\t^*)\lp{\bar F_r}{6/5,\O}{2}.
\end{split}
\end{equation}
Multiplying \rf{1.12} by $\G$, integrating over $\O$ and using boundary conditions, we get
\[\begin{split}
\fr{1}{2}\dt\lp{\G}{2,\O}{2}+\nu \lp{\nb \G}{2,\O}{2}-\nu\int_{-a}^a\G^2\bigg|_{r=0}^{r=R}dz=-2\io \fr{v_r}{r}\Phi
\G dx\\
+\io \dot \al \t_{,z}\bar f_r\G dx-\io \dot \al \t_{,r}\bar f_z\G dx+\io\al\bar F_{\vp}\G dx.
\end{split}
\]
Using Lemma \ref{lemma 2.2} and applying the H\"older and Young inequalities to the last three terms from the r.h.s. and recalling that $\o_\vp |_{r=R}=0$ we derive
\[\begin{split}
&\fr{1}{2}\dt\lp{\G}{2,\O}{2}+\nu \lp{\nb \G}{2,\O}{2}-\nu\int_{-a}^a\G^2\bigg|_{r=0}^{r=R}dz\\
&\le c \ll |\io \fr{v_\vp}{r}\Phi\G dx\rr|+\fr{c}{\nu}\phi^2(\t_*,\t^*)\lp{\nb \t}{2,\O}{2}\lp{\bar f}{3,\O}{2}\\
&+\fr{c}{\nu}\phi^2(\t_*,\t^*)\lp{\bar F_\vp}{6/5,\O}{2}.
\end{split}
\]
Dropping the last term on the l.h.s. , and then multiplying the resulting equation by $cD_2^2\ll(1+{\lp{v_\vp}{\infty,\O}{2\ve_0}R^{2\ve_0}/ \ve_0D_2^{2\ve_0}}\rr)$ and adding to  \rf{4.4} gives 
\[
\begin{split}
    &D_2^2 \dt\lp{\G}{2,\O}{2}+\nu D_2^2 \lp{\nb\G}{2,\O}{2}+\dt\lp{\Phi}{2,\O}{2}+\nu  \lp{\nb\Phi}{2,\O}{2}\\
  & \le cD_2^2\ll(1+{\lp{v_\vp}{\infty,\O}{2\ve_0}R^{2\ve_0}\over \ve_0D_2^{2\ve_0}}\rr)\cdot\bigg(\bigg|\io \fr{\vphi}{r}\Phi\G dx \bigg|\\
  &+\fr{1}{\nu}\phi^2(\t_*,\t^*)\lp{\nb\t}{2,\O}{2} \lp{\bar f}{3,\O}{2}+ \fr{1}{\nu} \phi^2(\t_*,\t^*)\ll(\lp{\bar F_r}{6/5,\O}{2}+\lp{\bar F_\vp}{6/5,\O}{2}\rr)\bigg  ).
\end{split}
\]
Integrating in time gives \rf{4.1}.
We note that
\[
\begin{split}
    I&\le\intop_{\Omega^t}|rv_\varphi|^{1-\varepsilon}|v_\varphi|^\varepsilon \bigg|{\Phi\over r^{1-\varepsilon_1}}\bigg|\bigg|{\Gamma\over r^{1-\varepsilon_2}}\bigg|dxdt'\\
&\le D_2^{1-\varepsilon}\bigg(\intop_{\Omega^t}|v_\varphi|^{2\varepsilon} \bigg|{\Phi\over r^{1-\varepsilon_1}}\bigg|^2dxdt'\bigg)^{1/2}\bigg|{\Gamma\over r^{1-\varepsilon_2}}\bigg|_{2,\Omega^t}\equiv I_1\\
\end{split}
\]
where $\ve=\ve_1+\ve_2$ and $\ve_i,i=1,2$ is a positive number.
Using \rf{2.16} and applying the H\"older inequality in $I_1$ yields
\begin{equation*}
    I_1\le D_2^{1-\varepsilon}\bigg(\intop_{\Omega^t}|v_\varphi|^{2\varepsilon} \bigg|{\Phi\over r^{1-\varepsilon_1}}\bigg|^2dxdt'\bigg)^{1/2}\bigg|{\Gamma\over r^{1-\varepsilon_2}}\bigg|_{2,\Omega^t}
\equiv  I_2.
\end{equation*}
By the Hardy inequality, we obtain
\[
\lp{\G/r^{1-\ve_2}}{2,\Ot}{}\le\fr{c}{\ve_2}\lp{\nb\G r^{{\ve_2}}}{2,\Ot}{}\le \fr{cR^{\ve_2}}{\ve_2}\lp{\nb\G}{2,\Ot}{}.\]
Applying the H\"older inequality yields
\[
\begin{split}
    &\bigg(\intop_0^t\io \lp{v_\vp}{}{2\ve}\ll|\fr{\Phi}{r^{1-\ve_1}}\rr |^2dx dt'\bigg)^{1/2}\\
    &\le \bigg[\intop_0^t   \lp{v_\vp}{2\ve \sigma , \O}{2\ve}\bigg(\io\ll|\fr{\Phi}{r^{1-\ve_1}}\rr |^qdx\bigg)^{2/q}dt'\bigg]^{1/2} \equiv L,
\end{split}
\]
where $1/\sigma+1/\sigma'=1,\;q=2\sg'$. Let $d=2\ve\sg$. Then 
\[
\sg'=\fr{d}{d-2\ve}\;\text{ so }\;q=\fr{2d}{d-2\ve}.
\]
Continuing 
\[
L\le \sup_t \lp{v_\vp}{d,\O}{\ve}\bigg(\intop_0^t \ll|\fr{\Phi}{r^{1-\ve_1}}\rr |_{q,\O}^2dt'\bigg)^{1/2}\equiv L_1L_2.
\]
We now estimate the second factor $L_2$.  For this purpose we use Lemma \ref{2.9} for $p=2,\fr{s}{q}=1-\ve_1 $. Then $q\in[2,2(3-s)]  $   so
\begin{equation}\label{4.5}
    2\le q  \le\fr{6}{3-2\ve_1}
\end{equation}
where $\ve_1\in (0,1).$ Then Lemma \ref{lemma 2.9} implies
\[
\begin{split}
    L_2\le \bigg(\intop_0^t \ll|\fr{\Phi}{r^{1-\ve_1}}\rr |_{q,\O}^2dt'\bigg)^{1/2}\le
c\bigg(\intop_0^t\lp{\Phi}{2,\O}{2(\fr{3-s}{q}-\fr{1}{2})}\lp{\nb\Phi}{2,\O}{2(\fr{3}{2}-\fr{3-s}{q})}\bigg)^{1/2}\\
\le c \lp{\Phi}{2,\O}{\fr{3-s}{q}-\fr{1}{2}}\lp{\nb\Phi}{2,\O}{\fr{3}{2}-\fr{3-s}{q}}\equiv L_2^1
\end{split}
\]
where we used that for $\t_0=\fr{3-s}{q}-\fr{1}{2},\;1-\t_0=\fr{3}{2}-\fr{3-s}{q} $ the H\"older inequality can be applied. 
Since $q=\fr{2d}{d-2\ve}>2.$ we have 
\[
\t_0=\ve_1(1-\fr{3}{d})-\fr{3}{d}\ve_2
\]
Since $\t_0>0$  we have that $d>3$ and  
\begin{equation}\label{4.6}
\ve_1>\fr{3}{d-3}\ve_2    .
\end{equation}
Next, $\t_0\le 1$  implies
\begin{equation}
    \ve_1(1-\fr{3}{d})<1+\fr{3}{d}\ve_2
\end{equation}
which always holds. From the form of $I_2$ and estimate of $L$ we obtain \rf{4.2}. This ends the proof.
\end{proof}
\begin{remark}\label{rm 4.2}
    Introduce the notation
    \begin{equation}
        X(t)=\hp{\Phi}{V(\Ot)}{}+\hp{\G}{V(\Ot)}{}.
        \end{equation}
        Using \rf{4.2} in \rf{4.1} yields 
        \begin{equation}\label{4.9}
            \begin{split}
                \min\{1,D_2^2\}X^2\le cD_2^2\ll(1+{\lp{v_\vp}{\infty,\Ot}{\ve_0}R^{2\ve_0}\over \ve_0^2D_2^{2\ve_0}}\rr)\cdot \\
                \cdot \bigg[D_2^{1-\ve}\lp{v_\vp}{d,\infty,\Ot}{\ve}\fr{R^{\ve_2}}{\ve_2}\lp{\Phi}{2,\Ot}{\t_0}\hp{\Phi}{V(\Ot)}{1-\t_0}\hp{\G}{V(\Ot)}{}\\
                +B_1^2\lp{\nb \t}{2,\Ot}{2}+D_3^2\bigg].
            \end{split}
        \end{equation}
        Continuing we have 
        \begin{equation}\label{4.10}
            \begin{split}
             &   X^2(t)\le c \fr{D_2^2}{ \min\{1,D_2^2\}}\ll(1+{\lp{v_\vp}{\infty,\Ot}{2\ve_0}R^{2\ve_0}\over \ve_0^2D_2^{2\ve_0}}\rr)\cdot \\
              & \cdot \bigg[D_2^{1-\ve}\lp{v_\vp}{d,\infty,\Ot}{\ve}\fr{R^{\ve_2}}{\ve_2}\lp{\Phi}{2,\Ot}{\t_0}X^{2-\t_0}(t)
                 +B_1^2\lp{\nb \t}{2,\Ot}{2}+D_3^2\bigg]\\
                & \le \bigg(1+\lp{v_\vp}{\infty,\Ot}{2\ve_0}\bigg)\bigg[D_6^2\lp{v_\vp}{d,\infty,\Ot}{\ve}\lp{\Phi}{2,\Ot}{\t_0}X^{2-\t_0}(t)+D_7^2\lp{\nb \t}{2,\Ot}{2}+D_8^2\bigg],
            \end{split}
        \end{equation}
        where
        \begin{align}
            &D_6^2=\fr{D_2^2}{ \min\{1,D_2^2\}}D_2^{1-\ve}\fr{R^{\ve_2}}{\ve_2},\label{4.11}\\
            &D_7^2=\fr{D_2^2}{ \min\{1,D_2^2\}}B_1^2\nonumber\\
            &D_8^2=\fr{D_2^2}{ \min\{1,D_2^2\}}D_3^2\nonumber.
        \end{align}
        
\end{remark}
\begin{remark}\label{rm4.13}
    Using \rf{6.1},\rf{6.17} and \rf{6.21} in \rf{4.10} yields 
    \[
    X^2\le\phi_1X^{\fr{3}{2}\ve_0}X^{\t_0/2}X^{2-\t_0}+\phi_2,
    \]
    where $\phi_1,\phi_2$ depend on $D_0,\ldots D_{12}$.
    Since
    \[
    \fr{3}{2}\ve_0+\fr{\t_0}{2}+2-\t_0<2
    \]
    because $\ve_0$ is arbitrary small we obtain 
    \begin{equation}
        X^2(t)\le \phi(D_0,\ldots,D_{12}).
    \end{equation}
\end{remark}
\section{Estimates for swirl $u=rv_\vp$}\label{s5}
We derive energy estimate for $\nb u$. Recall that $\swirl \,   u= rv_\vp$  satisfies  
\begin{equation}\label{5.1}
\begin{split}
       &u_{,t}+v\cdot \nabla u- \nu \Delta u +{2\nu\over r}u_{,r}=\alpha (\t)f_0,\\
       &u=0\qquad \;\;\text{ on }S_1,\\
        &u_{,z}=0\qquad \text{ on }S_2.\\
\end{split}
 \end{equation}
\begin{lemma}[see Lemma 5.1 in \cite{OZ}]\label{lemma 5.1}
Any regular solution $u$ to \rf{5.1} satisfies 
\begin{equation}
\hspace{2cm}|u_{,z}(t)|_{2,\Omega}^2+\nu|\nabla u_{,z}|_{2,\Omega^t}^2\le c D_4^2,
\label{5.2}
\end{equation}
\begin{equation}\label{5.3}
|u_{,r}(t)|_{2,\Omega}^2+\nu\left( |u_{,rr}|_{2,\Omega^t}^2+|u_{,rz}|_{2,\Omega^t}^2\right) \le c D_5^2.
\end{equation}
where  $D_4^2=\fr1\nu(D_1^2+D_2^2+\lp{ u_{,z}(0)}{2,\O} {2} +\phi(\t_*,\t^*)\lp{f_0}{2,\O}{2} ) $
and $D_5$ is defined in \rf{5.11}.
\end{lemma}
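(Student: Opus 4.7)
To prove \rf{5.2} the plan is to differentiate the swirl equation \rf{5.1} with respect to $z$, multiply by $u_{,z}$, and integrate over $\Omega$. The key structural fact is that $u|_{S_1}=0$ gives $u_{,z}|_{S_1}=0$, and combined with $u_{,z}|_{S_2}=0$ this yields $u_{,z}\equiv 0$ on $S$; integration by parts then produces the dissipation $\nu|\nabla u_{,z}|_{2,\Omega}^2$ with no boundary residue. The convective term $\int v\cdot\nabla u_{,z}\cdot u_{,z}\,dx$ vanishes by $\divv v=0$ and $v\cdot\bar n|_S=0$, while the commutator $\int(v_{r,z}u_{,r}+v_{z,z}u_{,z})u_{,z}\,dx$ is bounded by H\"older and Sobolev inequalities using $\|v\|_{V(\Omega^t)}\le D_1$ and $|u|_{\infty,\Omega^t}\le D_2$. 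The singular piece $\frac{2\nu}{r}\int u_{,rz}u_{,z}\,dx$ reduces to boundary contributions $u_{,z}^2|_{r=0}$ and $u_{,z}^2|_{r=R}$, the first vanishing by the Liu--Wang expansion \rf{1.21} (since $u=rv_\varphi=b_1r^2+\cdots$ implies $u_{,z}|_{r=0}=0$) and the second by $u|_{S_1}=0$. The forcing terms $\dot\alpha\theta_{,z}f_0u_{,z}$ and $\alpha f_{0,z}u_{,z}$ are handled via H\"older--Young together with the temperature bounds of Lemma \ref{lemma 2.2}. Absorbing top-order terms into the dissipation and integrating in time yields \rf{5.2}.

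For \rf{5.3} the approach is to test \rf{5.1} against the multiplier $-\tfrac{1}{r}(ru_{,r})_{,r}=-(u_{,rr}+u_{,r}/r)$. The time term, after integration by parts in $r$ using $u_{,r}|_{r=0}=0$ (from \rf{1.21}) and $u|_{S_1}=0$, reduces cleanly to $\tfrac{1}{2}\tfrac{d}{dt}|u_{,r}|_{2,\Omega}^2$. The dissipative piece produces $\nu\bigl|\tfrac{1}{r}(ru_{,r})_{,r}\bigr|_{2,\Omega}^2+\nu|u_{,rz}|_{2,\Omega}^2$, where the second factor appears after integration by parts in $z$ using $u_{,rz}|_{S_2}=0$ (obtained by differentiating $u_{,z}|_{S_2}=0$ tangentially). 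Expanding the first square and combining with the singular contribution $\int\frac{2\nu}{r}u_{,r}\cdot(u_{,rr}+u_{,r}/r)\,dx$ produces a coercive combination of $|u_{,rr}|_{2,\Omega}^2$ and $|u_{,rz}|_{2,\Omega}^2$, with the residual $|u_{,r}/r|_{2,\Omega^t}^2$ controlled by the basic energy inequality (since $u_{,r}/r=v_\varphi/r+v_{\varphi,r}$ with both summands in $L^2(\Omega^t)$ at level $D_1$). The nonlinear term $-\int v\cdot\nabla u\cdot(u_{,rr}+u_{,r}/r)\,dx$ is bounded by H\"older and Sobolev using $\|v\|_{V(\Omega^t)}\le D_1$, $|u|_{\infty,\Omega^t}\le D_2$, and estimate \rf{5.2}; this step is precisely where the products $D_1^2D_2$ and $D_1^2D_2^2$ appearing in $D_5^2$ arise. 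Integration in time then delivers \rf{5.3}.

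The principal technical obstacle is the interaction between the $1/r$-singular terms in the swirl equation and the mixed boundary conditions of \rf{5.1}: each integration by parts produces several boundary contributions that must be killed using a different element of $u|_{S_1}=0$, $u_{,z}|_{S_2}=0$, or the axis expansion \rf{1.21}, and the multipliers have to be chosen so that the remaining bulk terms retain a coercive sign. In particular, for \rf{5.3} the fact that $u_{,r}$ does not itself vanish on $S_1$ means that coercivity is only recovered after a delicate cancellation between the singular term and the expansion of $\bigl|\tfrac{1}{r}(ru_{,r})_{,r}\bigr|_{2,\Omega}^2$; verifying that this cancellation leaves enough dissipation to absorb the nonlinear and forcing contributions is the crux of the argument.
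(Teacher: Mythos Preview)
Your plan for \rf{5.2} is essentially identical to the paper's: differentiate in $z$, test with $u_{,z}$, use $u_{,z}|_S=0$ and the axis expansion to kill boundary terms, and control the commutator via $|u|_{\iy,\Ot}\le D_2$ after one integration by parts. The only minor difference is the forcing: the paper integrates $\int(\alpha f_0)_{,z}u_{,z}\,dx$ by parts in $z$ to obtain $-\int\alpha f_0 u_{,zz}\,dx$, so that only $|f_0|_{2,\Ot}$ (and not $|f_{0,z}|$ or $|\t_{,z}|$) enters $D_4$; your direct splitting $\dot\alpha\t_{,z}f_0+\alpha f_{0,z}$ would require more of $f_0$ than $D_4$ records, so the paper's integration by parts is the correct move here.

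For \rf{5.3} you take a genuinely different route. The paper differentiates \rf{5.1} in $r$ and multiplies by $u_{,r}$; this produces a boundary term $-\nu\int_{-a}^a[(u_{,rr}+u_{,r}/r)u_{,r}r]_{r=R}\,dz$ from the Laplacian which does not vanish, and the paper disposes of it by projecting the swirl equation onto $S_1$ and cancelling against a matching boundary term coming from the forcing (see \rf{5.9}). Your choice of multiplier $M=-\tfrac1r(ru_{,r})_{,r}$ avoids this detour: the dissipative piece gives $\nu|u_{,rr}+u_{,r}/r|_{2,\O}^2+\nu|u_{,rz}|_{2,\O}^2$ directly, and combined with the singular term it collapses (as you say) to $\nu|u_{,rr}|_{2,\O}^2+\nu|u_{,rz}|_{2,\O}^2-\nu|u_{,r}/r|_{2,\O}^2$, with the last term controlled by $D_1$ via $u_{,r}/r=v_\vp/r+v_{\vp,r}$ and \rf{2.15}. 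The forcing is then absorbed by a straightforward Young inequality against $|u_{,rr}+u_{,r}/r|_{2,\O}$. For the convective term, note that after one integration by parts in $r$ your term $\int v\cdot\nb u\,M\,dx$ becomes exactly the paper's $\int v_{,r}\cdot\nb u\,u_{,r}\,dx$, and from there the paper integrates by parts once more to bring out a bare factor $u$ and invoke $|u|_{\iy,\Ot}\le D_2$; this second integration by parts is the step that produces the $D_1^2D_2$ and $D_1^2D_2^2$ contributions you mention, and a direct H\"older--Sobolev bound without it does not close. With that understood, your argument is correct and in fact slightly cleaner than the paper's on $S_1$.
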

\begin{proof}

Differentiating \rf{5.1}  with respect to $z$, multiplying  by $u_{,z}$ and integrating over $\Omega$ to obtain 
\begin{equation}\label{5.4}
\begin{split}
\fr12&\dt\lp {u_{,z}}{2,\O}2-\nu\io\divv(\nb u_{,z}u_{,z})dx+\nu\io |\nb u_{,z}|^2dx\\
&+2\nu\io u_{,zr}u_{,z}drdz+\io v_{,z}\nb u u_{,z}dx
+\fr12 \io v\cdot \nb(u_{,z}^2)dx\\
&=\io(\al(\t)f_0)_{,z}u_{,z}dx
\end{split}
\end{equation} 
The second term vanishes due to the boundary condition $\left. u_{,z}\right|_S=0$ \linebreak (recall \rf{1.3}$_{1,2}$). The fourth term  equals
\[
\nu\intop_\Omega\partial_r(u_{,z}^2)d r d z=\nu\intop_{-a}^au_{,z}^2\bigg|_{r=0}^{r=R}d z=0,
\]
since $u_{,z}|_{r=R}=0$ (see \rf{1.3}$_{1,2}$)   and the fact that $u_{,z}|_{r=0}=0$ (recall \rf{1.21}). Similarly, the sixth term  equals
\[
\frac12 \io v\cdot\nabla u_{,z}^2d x=\frac12 \intop_S v\cdot \bar n u_{,z}^2d S =0,
\]
because  $v\cdot\bar        n|_S=0$  (recall \rf{1.3}$_{1,2}$)  .  Integrating by parts in the fifth term in \rf{5.4}, and noting that the boundary term vanishes (since $u_{,z}=0$ on $S$), we obtain  
\[
\bigg| \intop_\Omega (v_{,z}\cdot\nabla) u\cdot u_{,z} dx\bigg| = \bigg| \intop_\Omega v_{,z}\cdot\nabla u_{,z}udx \bigg| \leq \delta \intop_\Omega|\nabla u_{,z}|^2dx+\frac{c}{\delta}  |u|_{\infty,\Omega}^2\intop_\Omega v_{,z}^2dx.
\]
Finally, integrating the right-hand side of \rf{5.4} by parts in $z$ we obtain 
\[
 \intop_\Omega \ll(\al(\t)f_0\rr)_{,z}u_{,z}dx  =-\intop_\Omega \al(\t)f_0u_{,zz}dx  \le\delta |u_{,zz}|^2_{2,\Omega }+ \frac{c}{\delta }\phi(\t_*,\t^*)|f_0 |_{2,\Omega }^2,
\]
where we used that $\intop_{S_2} \left[ f_0 u_{,z} \right]_{z=-a}^{z=a} rd r=0$ because $u_{,z}|_{S_2}=0$ \linebreak (recall  \rf{1.3}$_2$ ). 
Using the above results in \rf{5.4} gives 
\[
\dt\lp{u_{,z}}{2,\O}{2}+\nu|\nb u_{,z}|_{2,\O}^2\le \fr c\nu \ll(\lp
{u}{\infty,\O}2\lp{v_{,z}}{2,\O}{2}+\phi(\t_*,\t^*)|f_0 |_{2,\Omega }^2\rr)
\]
Integrating in $t\in (0,T)$ gives 
\begin{equation}\label{5.5}
    \begin{split}
        \lp{u_{,z}(t)}{2,\O}2&+\nu \lp{\nb u_{,z}}{2,\Ot}2\le \fr c\nu (\lp{u}{\infty,\Ot}2+\lp{v_{,z}}{2,\Ot}2\\
        &+\lp{u_{,z}(0)}{2,\O}2+\phi(\t_*,\t^*)|f_0 |_{2,\Omega }^2)\le c D_4^2
    \end{split}
\end{equation}
which proves \rf{5.2} using energy estimate \rf{2.15} and maximum principle  \rf{2.16} for the \swirl  $\;u.$
To prove \rf{5.3} we differentiate \rf{5.1}$_1$ with respect to $r$ multiply the resulting equation by  $u_{,r}$ and integrate over $\O$ to obtain
\begin{equation}\label{5.6}
    \begin{split}
        \fr12&\dt\lp{u_{,r}}{2,\O}2+\io v_{,r}\cdot\nb u u_{,r}dx+\io v\cd \nb u_{,r}u_{,r}dx\\
        &-\nu\io (\Delta u)_r u_{,r}dx +2\nu \io u_{,rr}u_{,r}drdz-2\nu\io\fr{u_{,r}^2}{r^2}dx\\&=\io (\al f_0)_{,r}u_{,r}dx.
    \end{split}
\end{equation}
We now examine  particular terms in \rf{5.6}. The second term equals
\[\begin{split}
    &\io v_{,r}\cdot\nb u u_{,r}rdrdz =\io (rv_{r,r}u_{,r}+rv_{z,r}u_{,z})u_{,r}drdz\\
    &=\io\ll[(rv_{r,r}u_{,r})_{,r}+(rv_{z,r}u_{,r})_{,z}\rr]udrdz\equiv I,
    \end{split}
\]
where we integrated by parts with respect in $r$ and $z$, respectively, and used the boundary conditions $u|_{S_1}=u|_{r=0}=0$ (recall \rf{1.3} and \rf{1.21}) and $v_{z,r}|_{S_2}=0$ (recall \rf{1.3}). Continuing, we have 
\[\begin{split}
    I&=-\io \ll[(rv_{r,r})_{,r}+(rv_{z,r})_{,z}\rr]u_{,r}udrdz\\
&-\io \ll[rv_{r,r}u_{,rr}+rv_{z,r}u_{,rz}\rr]udrdz=I_1+I_2
\end{split}
\]
Differentiating the divergence-free equation \rf{1.7}$_4$ in $r$ gives $v_{r,rr} +v_{z,zr}+\fr{v_{r,r}}r -\fr {v_r}{r^2}=0 $. Hence $I_1$ equals 
\[
I_1=-\io\fr{v_r}ru_{,r}udrdz
\]

Using the Young inequality in $I_2$ yields
\[
|I_2|\le\fr\nu 2 (\lp{u_{,rr}}{2,\O}2+\lp{u_{,rz}}{2,\O}2)+\fr c \nu \lp{u}{\infty,\O}2(\lp{v_{r,r}}{2,\O}2+\lp{v_{z,r}}{2,\O}2).
\]
The third term on the l.h.s.  of \rf{5.6} equals 
\[
\fr12 \io v\cd \nb u_{,r}^2dx=\fr12 \io \divv (vu_{,r}^2)dx=0
\]
since $v\cd\bar n|_S=0$. As for the fourth term in \rf{5.6} we have
\[\begin{split}
    -&\io (\Delta u)_{,r}u_{,r}dx=-\io \ll(u_{,rrr}+(\fr1r u_{,r})_{,r}+u_{,rzz}\rr)u_{,r}rdrdz\\
    &=-\io \ll[(u_{,rr}+\fr1r u_{,r})u_{,rr}\rr]_{,r}drdz+\io u_{,rr}(u_{,r}r)_{,r}drdz\\
    &+\io \fr1ru_{,r}(u_{,r}r)_{,r}drdz
+\io u_{,rz}^2dx \\
&=-\intop_{-a}^a\ll[(u_{,rr}+\fr1r u_{,r}){u_{,r}r}\rr]\bigg|_{r=0}^{r=R}dz+\io (u_{,rr}^2+u_{,rz}^2)dx\\
&+\io \fr{u_{,r}^2}{r^2}dx+2\io u_{,rr}u_{,r}drdz.
\end{split}
\]
Using the above expressions in \rf{5.6} yields
\begin{equation}\label{5.7}
    \begin{split}
        \fr12&\dt\lp{u_{,r}}{2,\O}2+\fr\nu 2\io (u_{,rr}^2+u_{,rz}^2)dx-\nu\io \fr{u_{,r}^2}{r^2}dx\\
        &-\nu\intop_{-a}^a\ll[(u_{,rr}+\fr1r u_{,r})u_{,r}r\rr]\bigg|_{r=0}^{r=R}dz+4\nu\io u_{,rr}u_{,r}drdz\\
        &\le \io (\al f_0)_{,r}u_{,r}dx+\io \fr{v_r}r\fr{u_{,r}}rudx
+\fr c\nu\lp{u}{\infty,\O}2(\lp{v_{r,r}}{2,\O}2+\lp{v_{z,r}}{2,\O}2).
\end{split}
\end{equation}
The last term on the l.h.s. of \rf{5.7} equals 
\[
2\nu \intop_{-a}^a
u_{,r}^2\bigg|_{r=0}^{r=R}dz=2\nu\intop_{-a}^au_{,r}^2\bigg|_{r=R}dz
\]
Since the expansion \rf{1.21} implies that 
\begin{equation}\lb{5.8}
    u=b_1(z,t)r^2+b_2(z,t)r^3+\ldots ,
\end{equation}
so that $u_{ ,r}\bigg|_{r=0}=0.$ Moreover, the above expansion used in the fourth term on the l.h.s. of \rf{5.7} implies \begin{equation}\label{5.9}
    \begin{split}
        -&\nu\intop_{-a}^a(u_{,rr}+\fr1ru_{,r})u_{,r}r\bigg|_{r=R}dz=-2\nu\intop_{-a}^au_{,r}^2\bigg|_{r=0}^{r=R}dz\\
        &+\intop_{-a}^a\al(\t)f_0
u_{,r}r\bigg|_{r=R}dz  ,  \end{split}
\end{equation}
where in the last equality  we used \rf{1.9}$_1$ projected onto $S_1$.
Integration by parts in $r$ in the first term on the r.h.s. of \rf{5.7} gives 
\[
\intop_{-a}
\al f_0u_{,r}r\bigg|_{r=R}dz-\io\al f_0u_{,rr}dx-\io\al f_0 u_{,r}drdz,
\]
where we used \rf{5.8} again to note that $u_ {,r}|_{r=0} =0.$ We note again that the first term above cancells with the last term of \rf{5.9}, while the remaining terms can be estimated using the Young inequality by
\[
\fr\nu 4 \lp{u_{,rr}}{2,\O}2+\nu \ll|\fr{u_{,r}}r\rr|_{2,\O}^2+\fr c \nu \lp{\al f_0}{2,\O}2.
\]
Using the above estimates in \rf{5.7} and simplifying we get.
\begin{equation}\label{5.10}
    \begin{split}
    \fr12 &\dt \lp{u_{,r}}{2,\O}2+\fr \nu 4(\lp{u_{,rr}}{2,\O}2+\lp{u_{,rz}}{2,\O}2)\le2\nu \io\fr{u_{,r}^2}{r^2}dx\\
  &  +\io \fr{v_r}r\fr{u_{,r}}rudx+c \lp{u}{\infty,\O}2(\lp{v_{r,r}}{2,\O}2+\lp{v_{r,r}}{2,\O}2)+\fr c \nu \lp{\al f_0}{2,\O}2.
\end{split}
\end{equation}
Integrating \rf{5.10} with respect to time yields \begin{equation}\label{5.11}
\begin{split}
     &\lp{u_{,r}(t)}{2,\O}{2}+\nu(\lp{u_{,rr}}{2,\Ot}{2}+\lp{u_{,rz}}{2,\Ot}{2})\\
    &\le cD_1^2(1+D_2)+cD_1^2D_2^2+\phi(\t_*,\t^*)\lp{f_0}{2,\Ot}{2}\\
     &+\lp{u_{,r(0)}}{2,\O}{2}\equiv  cD_5^2.
\end{split}
   \end{equation}
This implies \rf{5.3} and concludes the proof.

\end{proof}
\section{Auxiliary estimates}\label{s6}
\begin{lemma}\lb{lemma 6.1}
    Any regular solution to \rf{1.1}-\rf{1.4} satisfies
    \begin{equation}\lb{6.1}
        \begin{split}
            &\hp{\o_{r}}{V(\Ot)}2 +\hp{\o_{z}}{V(\Ot)}2+\lp{\fr{\o_r}r}{2,\Ot}2\\ 
            &\le \fr1\nu\phi(D_1,D_2,D_4,D_5)\ll(\fr{R^{\ve_0}}{\ve_0}\lp{v_\phi}{\iy,\Ot} {\ve_0}+  \fr{R^{2{\ve_0}}}{{\ve_0}^2}\lp{v_\phi}{\iy,\Ot} {2{\ve_0}}\rr)\lp{\nb \G}{2,\Ot}{}\\
            &+ c D_9^2 \lp{\nb \t}{2,\Ot}2+ c D_{10}^2,
        \end{split}
    \end{equation}
    where
    \begin{equation}\lb{6.2}
        \begin{split}
            &D_9^2=\fr{\phi(\t_*,\t^*)}{\nu}\lp{f_\vp}{3,\iy,\Ot}2\\
            &D_{10}^2=(D_4+D_5)\hp{f_\vp}{L_2(0,t;L_3(S_1)}{}\\
            &+\fr1\nu (\lp{F_r}{6/5,2/\Ot}2+\lp{F_z}{6/5,2/\Ot}2)+\lp{\o_r(0)}{2,\O}2+\lp{\o_z(0)}{2,\O}2.
        \end{split}
    \end{equation}
\end{lemma}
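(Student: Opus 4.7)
The plan is to run a coupled energy estimate on the system \rf{1.8}$_1$--\rf{1.8}$_3$. First I would multiply the $\omega_r$ equation by $\omega_r$, multiply the $\omega_z$ equation by $\omega_z$, integrate over $\Omega$, and add. On the left we obtain
\[
\tfrac{1}{2}\dt\bigl(|\omega_r|_{2,\Omega}^2+|\omega_z|_{2,\Omega}^2\bigr)+\nu\bigl(|\nabla\omega_r|_{2,\Omega}^2+|\nabla\omega_z|_{2,\Omega}^2\bigr)+\nu|\omega_r/r|_{2,\Omega}^2
\]
plus boundary contributions from the integration by parts of $-\nu\Delta\omega_r,\,-\nu\Delta\omega_z$. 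The convective terms vanish after integration by parts since $\divv v=0$ and $v\cdot\bar n|_S=0$. By \rf{1.6} we have $\omega_r=0$ on $S$, so the boundary contribution from $\omega_r$ vanishes; also $\omega_{z,z}=0$ on $S_2$, so the only surviving boundary term is $-\nu\int_{S_1}\omega_{z,r}\omega_z dS$, which on $S_1$ reduces to $-\nu\int_{S_1}\omega_{z,r}\,v_{\varphi,r}\, dS$ via \rf{1.6}.

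Second, this $S_1$-boundary term is handled by using \rf{5.1} projected on $S_1$ (where $u=0$ and $u_{,z}=0$ on neighbouring parts of $S$) to express $\nu\omega_{z,r}|_{S_1}$ through $\alpha f_\varphi|_{S_1}$ and the traces of $u_{,r},u_{,rr}$ that are already controlled by $D_4,D_5$ via Lemma~\ref{lemma 5.1}. Applying a trace inequality and H\"older's inequality in time yields a bound by $(D_4+D_5)\|f_\varphi\|_{L_2(0,t;L_3(S_1))}$, which accounts for the first term of $D_{10}^2$.

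Third, I would rewrite the vortex stretching terms $\omega_r v_{r,r}+\omega_z v_{r,z}$ and $\omega_r v_{z,r}+\omega_z v_{z,z}$ using \rf{1.18} so that $v_{r,r}, v_{r,z}, v_{z,r}, v_{z,z}$ become expressions in $\psi_{1,r},\psi_{1,z},\psi_{1,rr},\psi_{1,rz},\psi_{1,zz}$. The aim is to reproduce the pattern used in Section~4: extract one factor $v_\varphi$ through the identity $\omega_r=-v_{\varphi,z}$ (or equivalently through integration by parts on the $\psi_1$-structure) so that after integration by parts one estimates each resulting term by
\[
|v_\varphi|_{\infty,\Omega}^{\varepsilon_0}\,\bigl|\tfrac{1}{r^{1-\varepsilon_0}}\psi_{1,\cdot\cdot}\bigr|_{2,\Omega}\,|\nabla\omega_{r,z}|_{2,\Omega}.
\]
The weighted factor is controlled by the Hardy inequality \rf{2.19} (producing $R^{\varepsilon_0}/\varepsilon_0$) and by the $H^3$ bound \eqref{3.13}--\eqref{3.24}, yielding a constant $\phi(D_1,D_2,D_4,D_5)$ times $|\nabla\Gamma|_{2,\Omega}$; the leftover $\omega_\varphi$-independent pieces are controlled by $D_1,D_2,D_4,D_5$ from Lemmas~\ref{lemma 2.4}, \ref{lemma 2.5}, \ref{lemma 5.1}. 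The forcing terms $-\dot\alpha\theta_{,z}f_\varphi$ in the $\omega_r$-equation and $\dot\alpha\tfrac{1}{r}(r\theta)_{,r}f_\varphi$ in the $\omega_z$-equation are handled by H\"older's inequality in the form $|\theta_{,\cdot}|_{2,\Omega}|f_\varphi|_{3,\Omega}|\omega_{r,z}|_{6,\Omega}$, absorbed with the dissipation via Young, producing the $D_9^2|\nabla\theta|_{2,\Omega^t}^2$ term. The curl-forcing terms $\alpha F_r,\alpha F_z$ enter with $|F_r|_{6/5,2,\Omega^t}^2+|F_z|_{6/5,2,\Omega^t}^2$, making up the remaining piece of $D_{10}^2$.

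The main obstacle I anticipate is the vortex stretching: one needs the right split of each stretching term into a factor carrying $v_\varphi$ (absorbed by the maximum principle $D_2$ and the Hardy weight) and a factor that can be bounded through the $H^3$ estimate for $\psi_1$ in terms of $\nabla\Gamma$, without producing weights worse than $r^{-1+\varepsilon_0}$. This is where the Liu--Wang expansions \rf{1.20}--\rf{1.24} are needed to justify vanishing of the critical boundary traces on the axis after integration by parts. Once these manipulations are in place, Young's inequality absorbs $\nu|\nabla\omega_r|_{2,\Omega}^2+\nu|\nabla\omega_z|_{2,\Omega}^2$ on the left-hand side, and integration in time together with the initial data $|\omega_r(0)|_{2,\Omega}^2+|\omega_z(0)|_{2,\Omega}^2$ yields \rf{6.1} with $D_9,D_{10}$ as in \rf{6.2}.
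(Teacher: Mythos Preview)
Your overall plan --- energy identity for $\omega_r,\omega_z$, boundary term on $S_1$ via projecting the angular equation, forcing terms by H\"older/Young, vortex stretching through the stream function and the $H^3$ bounds for $\psi_1$ --- matches the paper's route. Two points, however, are not right as written.

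\emph{How the factor of swirl is extracted.} The mechanism is not ``extract $v_\varphi$ through $\omega_r=-v_{\varphi,z}$''. In the paper one writes $\omega_r=-\tfrac{1}{r}u_{,z}$, $\omega_z=\tfrac{1}{r}u_{,r}$ and the velocity gradients via $\psi$, so the stretching integrand becomes a sum of terms of the form $(\partial^2\psi)\,\tfrac{1}{r}u_{,\cdot}\,\tfrac{1}{r}u_{,\cdot}$. One integration by parts in $r$ or $z$ turns one $u_{,\cdot}$ into a bare $u$, and only then does the splitting $u=|u|^{1-\varepsilon_0}|v_\varphi|^{\varepsilon_0}r^{1-\varepsilon_0}$ together with $|u|_\infty\le D_2$ produce the $|v_\varphi|_\infty^{\varepsilon_0}$ weight. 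With this structure, most terms are bounded directly as $(\text{data})\cdot|v_\varphi|_\infty^{\varepsilon_0}\,|\Gamma_{,z}|_{2,\Omega^t}$, with the $\psi_1$-factor controlled by Hardy and Lemma~\ref{l3.2}, and the $u$-derivative factor by $D_1,D_4,D_5$. Your trilinear template $|v_\varphi|_\infty^{\varepsilon_0}\,|\psi_{1,\cdot\cdot}/r^{1-\varepsilon_0}|_2\,|\nabla\omega|_2$ would force you to absorb $|\nabla\omega|$ via Young, and then the other factor appears \emph{squared}; Hardy and the $H^3$ estimate would then give $|\nabla\Gamma|_{2,\Omega^t}^2$, not the first power, which is incompatible with \rf{6.1} and would break the exponent counting in Theorem~\ref{th1.1}.

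\emph{The origin of the $|v_\varphi|_\infty^{2\varepsilon_0}$ term.} Your sketch does not account for it. After the integrations by parts two residual integrals survive,
\[
\int_{\Omega^t}\frac{\psi_{1,z}}{r^2}\,u_{,zz}\,u\,dx\,dt'\quad\text{and}\quad \int_{\Omega^t}\frac{\psi_{1,z}}{r}\Bigl(\tfrac{1}{r}u_{,r}\Bigr)_{,r}u\,dx\,dt',
\]
for which the direct Hardy/$H^3$ route fails (the $\psi_1$-factor carries only one derivative). Here one \emph{does} use Young to absorb $\varepsilon|\nabla\omega|^2$ on the left, and the remaining piece $\int \psi_{1,z}^2 r^{-2(1-\varepsilon_0)}u^2$ is handled by $|u|\le D_2^{1-\varepsilon_0}|v_\varphi|^{\varepsilon_0}r^{1-\varepsilon_0}$, Hardy, and then the interpolation
\[
\int_\Omega\psi_{1,rz}^2\,dx\le c\,|\nabla^2\psi_{1,z}|_{2,\Omega}\,|\psi_{1,z}|_{2,\Omega}\le c\,D_1\,|\Gamma_{,z}|_{2,\Omega},
\]
which uses \rf{2.18}. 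This interpolation is what keeps $|\Gamma_{,z}|$ to the \emph{first} power while producing the coefficient $R^{2\varepsilon_0}\varepsilon_0^{-2}|v_\varphi|_\infty^{2\varepsilon_0}$. Without this step the estimate does not close.
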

\begin{proof}
    Multiplying \rf{1.8}$_1$ by $\o_r$, \rf{1.8}$_3$ by $\o_z$, adding the resulting equations and integrating over $\Ot$, we obtain
\begin{equation}\lb{6.3}
    \begin{split}
        \fr12&(\lp{\o_r(t)}{2,\O}2+\lp{\o_r(t)}{2,\O}2)\\
        &+\nu (\lp{\nb \o_r}{2,\Ot}2+\lp{\nb \o_z}{2,\Ot}2+\lp{\fr{\o_r}{r}}{2,\Ot}2)\\
        &=\nu \intop_{S^t}(\bar n\cd \nb\o_z\o_z+\bar n\cd \nb\o_r\o_r)dSdt'\\
        &+\iot (v_{r,r}\o_r^2+v_{z,z}\o_z^2+(v_{r,z}+v_{z,r})\o_r\o_z)dxdt'\\
        &+\iot \dot \al (-\t_{,z}+\fr1r(r\t)_{,r})f_\vp dxdt'+\iot (F_r\o_r+F_z\o_z)dxdt'\\
        &+\fr12 (\lp{\o_r(0)}{2,\O}2+\lp{\o_z(0)}{2,\O}2)\equiv I_1+J+I_2+I_3\\
        &+\fr12(\lp{\o_r(0)}{2,\O}2+\lp{\o_z(0)}{2,\O}2).
         \end{split}
\end{equation}
First we examine $I_{1}$.  Since $ \o _r=-v_{\vp ,z}, \,\vphi|_{r=R} =0$ and $\left.v_{\varphi, z}\right|_{S_{2}}=0$ we obtain
$$
\int_{S} \bar{n} \cdot \nabla \omega_{r} \omega_{r} d S=0 .
$$
Using \rf{1.5}$_{3}$ we get $\o_z=v_{\vp,r}+\fr{\vphi}{r}  $. Then

\begin{multline*}
 -\nu \intop_{S_{1}^{t}} \bar{n} \cdot\nabla \o_z \o_z d S_{1} d t^{\prime}=-\nu \intop_{S_{1}^{t}} \partial_{r}\left(v_{\varphi, r}+\frac{\vphi}{r}\right)\left(v_{\varphi, r}+\frac{\vphi}{r}\right) R d z d t^{\prime} \\
 =-\nu \intop_{0}^{t} \intop_{-a}^{a}\left(v_{\vp, r r}+\frac{v_{\vp, r}}{r}\right) v_{\vp, r}\bigg|_{r=R} R d z d t^{\prime} \equiv I_{1}^1,
\end{multline*}
where we used that $\left.v_{\varphi}\right|_{S_{1}}=0$.
Projecting $(1.7)_{2}$ on $S_{1}$ yields
$$
-\nu\left(v_{\varphi, rr}+\frac{1}{r} v_{\varphi, r}\right)=\alpha(\theta) f_{\varphi} .
$$
Hence
$$
\begin{aligned}
& I_{1}^{1}=R \intop_{0}^{t} \intop_{-a}^{a} \alpha(\theta) f_{\varphi} v_{\varphi, r}\bigg|_{r=R} d z d t^{\prime}=\al(\t_*,\t^*)\intop_{0}^{t} \intop_{-a}^{a}f_{\varphi}(u_{, r}-\frac{1}{R} u) d z d t^{\prime}
\end{aligned}
$$
and
\begin{align*}
\left|I_{1}^{1}\right| &\leqslant c \alpha\left(\theta_{*}, \theta^{*}\right)\left|f_{\varphi}\right|_{2, S_{1}^{t}}\left(|u_{,r}|_{2, S_{1}^{t}}+|u|_{2, S_{1}^{t}}\right)\\  & \leqslant c \alpha\left(\theta_{*}, \theta^{*}\right)(D_4+D_5).
\end{align*}
Finally,
$$
-\nu \intop_{S_{2}^{t}} \bar{n}\cdot \nabla \omega_{z} \omega_{z} d S_{2} d t^{\prime}=-\nu \intop_{S_{2}^{t}} \frac{1}{r} u_{,zr}  \frac{1}{r} u_{, r} d S_{2} dt'=0
$$
Summarizing,
\begin{equation}\lb{6.4}
    \quad I_{1} \leq c \alpha\left(\theta_{*}, \theta^{*}\right)\left|f_{\varphi}\right|_{2, S_{1}^{t}}\left(D_{4}+D_{5}\right).
\end{equation}
Next, we examine $I_{2}, I_{3}$. By the H\"older inequality, we get
\begin{equation}\lb{6.5}
    \begin{aligned}
I_{2} & \leq c \alpha\left(\theta_*, \theta^{*}\right)\|\theta\|_{1,2, \Ot}\left|f_{\vp}\right|_{2, \Ot} \\
& \leq c \alpha\left(\theta_{*}, \theta^{*}\right) D_{0}\left|f_{\varphi}\right|_{2, \Ot}, \\
I_{3} & \leq \varepsilon\left(\left|\omega_{r}\right|_{6,2, \Ot}^2+\left|\omega_{z}\right|_{6,2, \Omega^{t}}^{2}\right) \\
& +\frac{1}{4 \varepsilon}\left(\left|F_{r}\right|_{6 / 5,2, \Ot}^{2}+\left|F_{z}\right|_{6 / 5,2, \Omega^{t}}^{2}\right).
\end{aligned}
\end{equation}
Finally, we examine
\begin{equation}\lb{6.6}
    J=\intop_{\Omega^{t}}\left[v_{r, r} \omega_{r}^{2}+v_{z, z} \omega_{z}^{2}+\left(v_{r, z}+v_{z, r}\right) \omega_{r} \omega_{z}\right] d x d t^{\prime}.
\end{equation} 
Using \rf{1.5} and \rf{1.15} yields
\begin{equation}\lb{6.7}
    \begin{aligned}
J&=\intop_{\Omega^{t}}\left[-\psi_{,z r}\left(\frac{1}{r} u_{,z}\right)^{2}+\left(\psi_{,rz}+\frac{\psi_{,z}}{r}\right)\left(\frac{1}{r} u_{, r}\right)^{2}\right. \\
& \left.-\left(-\psi_{,zz}+\psi_{, r r}+\frac{1}{r} \psi_{, r}-\frac{\psi}{r^{2}}\right)\left(\frac{1}{r} u_{,z}\right)\left(\frac{1}{r} u_{, r}\right)\right] d x d t^{\prime} \\
& \equiv J_{1}+J_{2}+J_{3} \cdot
\end{aligned}
\end{equation}
Consider $J_{1}$. Integrating by parts with respect to $z$ and using that \linebreak $\left.u_{,z}\right|_{S_{2}}=0$, we obtain
\begin{multline*}
J_{1}=  -\intop_{\Omega^{t}} \psi_{,z r} \frac{1}{r} u_{,z} \frac{1}{r} u_{,z} d x d t^{\prime}=\intop_{\Omega^{t}} \psi_{,zz r} \frac{1}{r^{2}} u_{,z} u d x d t^{\prime} \\
 +\intop_{\Omega^{t}} \psi_{,z r} \frac{1}{r^{2}} u_{,zz} u d x d t^{\prime} \equiv J_{11}+J_{12}
\end{multline*} 
Using the transformation $\psi=r \psi_{1}$, we have
$$
J_{11}=\intop_{\Omega^{t}}\left(\frac{\Psi_{1,zz}}{r}+\psi_{1,zzr}\right) \frac{u_{,z}}{r} u d x d t^{\prime} \equiv J_{11}^{1}+J_{11}^{2} .
$$
By the H\"older inequality,
\begin{multline*}
\left|J_{11}^{1}\right| \leq \intop_{\Omega^{t}}\left|\frac{\psi_{1,zz}}{r^{1-\varepsilon_{0}} }\right|\left|\frac{u_{,z}}{r}\right|\left|v_{\varphi}\right|^{\varepsilon_{0}}|u|^{1-\varepsilon_{0}} d x d t^{\prime}\\
\leq c \alpha\left(\theta_{*},\theta^{*}\right) D_{2}^{1-\varepsilon_{0}}\left|v_{\varphi}\right|_{\iy, \Omega^{t}}^{\varepsilon_{0}}\left|\frac{u_{,z}}{r}\right|_{2, \Omega^{t}} \lp{\fr{\Psi_{1,zz}} {r^{1-\ve_0}}}{2,\Ot}{},
\end{multline*}
where \rf{2.16} is used. In view of \rf{2.15}
$$
\left|\frac{u_{,z}}{r}\right|_{2, \Ot} \leq\left|v_{\varphi, z}\right|_{2, \Ot} \leq D_{1}
$$
and \rf{2.19},\rf{3.13} imply
$$
\left|\frac{\psi_{1,zz}}{r^{1-\varepsilon_{0}}}\right| \leqslant c \frac{R^{\varepsilon_{0}}}{\varepsilon_{0}}\left|\psi_{1,zz r}\right|_{2, \Ot} \leqslant c \frac{R^{\varepsilon_{0}}}{\varepsilon_{0}}\left|\Gamma_{,z}\right|_{2, \Ot}.
$$
Summarizing,
$$
\left|J_{11}^{1}\right| \leqslant c \alpha\left(\theta_{*}, \theta^{*}\right) \frac{R^{\varepsilon_{0}}}{\varepsilon_{0}} D_{1} D_{2}^{1-\varepsilon_{0}}\lp{v_{\vp}}{\iy, \Ot}{{\varepsilon_{0}}}\lp{\G_{,z}}{2, \Ot}{} .
$$
Next,
$$
\begin{aligned}
\left|J_{11}^{2}\right| & \leq|u|_{\infty, \Ot}\left|\frac{u_{,z}}{r}\right|_{2, \Ot}\left|\psi_{1,zz r}\right|_{2, \Omega^{t}} \\
& \leqslant c \alpha\left(\theta_{* }, \theta^{*}\right) D_{2} D_{1}\left|\G_{,z}\right|_{2, \Ot}
\end{aligned}
$$
where \rf{2.15},\rf{2.19},\rf{3.13}  were used.
Hence,
\begin{multline}\lb{6.8}
\left|J_{1 1}\right|  \leqslant c \alpha\left(\theta_{*}, \theta^{*}\right) D_{1} D_{2}^{1-\varepsilon_{0}}\left|v_{\varphi}\right|_{\infty, \Ot}^{\varepsilon_{0}}\left|\Gamma_{,z}\right|_{2, \Omega t} \\
+  c \alpha\left(\theta_{*}, \theta^{*}\right) D_{1} D_{2}\left|\Gamma_{,z}\right|_{2, \Omega^{t}}.
\end{multline}
Next,
$$
J_{12}=\intop_{\Omega^{t}}\left(\frac{\psi_{1,z}}{r^{2}}+\frac{\psi_{1,z r}}{r}\right) u_{,zz} u dx dt \equiv J_{12}^{1}+J_{12}^{2} .
$$
Estimates \rf{2.16},\rf{3.24},\rf{5.1} imply
$$
\begin{aligned}
\left|J_{12}^{2}\right| & \leq|u|_{\iy, \Ot}\left|u_{,zz}\right|_{2, \Omega^{t}}\left|\frac{\psi_{1,z r}}{r}\right|_{2, \Ot} \\
& \leq c \alpha\left(\theta_{* }, \theta^{*}\right) D_{2} D_{4}\left|\G_{,z}\right|_{2, \Ot} .
\end{aligned}
$$
Hence,
\begin{equation}\lb{6.9}
|J_{12}| \leqslant\bigg|\intop_{\Omega^{t}} \frac{\psi_{1,z}}{r^{2}} u_{,zz} u d x d t^{\prime}\bigg| 
 +c \alpha\left(\theta_{*}, \theta^{*}\right) D_{2} D_{4}\left|\G_{,z}\right|_{2, \Ot}.
\end{equation}
Definition of $J_{1}$ and \rf{6.8},\rf{6.9}  imply
\begin{multline}\lb{6.10}
\left|J_{1}\right| \leqslant c \alpha\left(\theta_{* }, \theta^{*}\right)\left[\frac{R^{\varepsilon_{0}}}{\varepsilon_{0}} D_{1} D_{2}^{1-\varepsilon_{0}}\left|v_{\vp}\right|_{\iy, \Ot}^{\varepsilon_{0}} 
 +D_{1} D_{2}+D_{2} D_{4}\right]\left|\G_{,z}\right|_{2, \Omega t}\\+\left|\intop_{\Omega^{t}} \frac{\psi_{1,z}}{r^{2}} u_{,zz} u d x d t\right| .
\end{multline}

Next, we estimate $J_{2}.$  We can write it in the form
$$
J_{2}=\intop_{\Omega^{t}}\left(\psi_{, r z}+\frac{\psi_{,z}}{r}\right) \frac{1}{r} u_{, r} u_{, r} d r d z d t^{\prime} .
$$
Integrating by parts with respect to $r$ yields
$$
\begin{aligned}
& J_{2}=\intop_{0}^{t} \intop_{-a}^{a}\left(\psi_{,r z}+\frac{\psi_{,z}}{r}\right) \frac{1}{r} u_{, r} u\bigg|_{r=0} ^{r=R} d z d t^{\prime} \\
& -\intop_{\Omega^{t}}\left(\psi_{,rz}+\frac{\psi_{,z}}{r}\right)_{, r} \frac{1}{r} u_{, r} u d r d z d t^{\prime} \\
& -\intop_{\Omega^{t}}\left(\psi_{,rz}+\frac{\psi_{,z}}{r}\right)\left(\frac{1}{r} u_{1 r}\right)_{,r} \frac{u}{r} d x d t \\
&\equiv J_{20}+J_{21}+J_{22},
\end{aligned}
$$
where the boundary term vanishes because $\left.u\right|_{r=R}=0$ and \rf{1.20},\rf{1.21},    \rf{1.22} imply

\begin{align*}
\left.\left(\psi_{,rz}+\frac{\psi_{,z}}{r}\right) \frac{1}{r} u_{, r} u\right|_{r=0}&=\left.2 a_{1,z}\left(v_{\vp,r} r+\frac{v_{\vp}}{r}\right) r v_{\varphi}\right|_{r=0} \\
& =\left.4 a_{1,z} b_{1} r^{2} b_{1}\right|_{r=0}=0.
\end{align*}
Using the transformation $\psi=r \psi_{1}$ in $J_{21}$ yields
$$
\begin{aligned}
J_{21} & =-\intop_{\Omega^{t}}\left(2 \psi_{1,z}+r \psi_{1, r z}\right) _{,r}\frac{1}{r} \frac{1}{r} u_{, r} u d x d t^{\prime} \\
& =-\intop_{\Omega^{t}}\left(3 \psi_{1, r z}+r \psi_{1, r r z}\right) \frac{1}{r} \frac{1}{r} u_{, r} u d x d t^{\prime}.
\end{aligned}
$$
By the H\"older inequality, we have
$$
\begin{aligned}
\left|J_{21}\right| & \leq 3\left|\frac{\psi_{1, rz}}{r}\right|_{2, \Ot}\left|\frac{1}{r} u_{, r}\right|_{2, \Ot}|u|_{\iy, \Ot} \\
& +\left|\psi_{1, r r z}\right|_{2, \Ot}\left|\frac{1}{r} u_{1, r}\right|_{2, \Ot}+|u|_{\iy, \Ot} \\
& \leq c \alpha\left(\theta_{*}, \theta^{*}\right) D_{1} D_{2}\left|\Gamma_{,z}\right|_{2, \Omega t},
\end{aligned}
$$
where \rf{2.15},\rf{2.16},\rf{3.14} and \rf{3.24} were used.
Next, we consider $J_{22}$. Passing to variable $\psi_{1}$, we get
$$
J_{22}=-\intop_{\Ot}\left(2 \psi_{1,z}+r \psi_{1, r z}\right) \frac{1}{r}\left(\frac{1}{r} u_{, r}\right)_{,r} u d x d t^{\prime} .
$$

Hence
$$
\begin{aligned}
 \left|J_{22}\right|& \leq 2\left|\intop_{\Omega^{t}} \frac{\psi_{1,z}}{r}\left(\frac{1}{r} u_{, r}\right)_{, r} u d x d t^{\prime}\right|+\left|\intop_{\Omega^{t}} \psi_{1, r z}\left(\frac{1}{r} u_{, r}\right)_{,r} u d x d t^{\prime}\right| \\
& \equiv K_{1}+K_{2},
\end{aligned}
$$
where $K_{2}$ is bounded by
$$
\begin{aligned}
& K_{2} \leq\left|\intop_{\Omega^{t}} \frac{\psi_{1, r z}}{r} u_{, r r} u d x d t^{\prime}\right|+\left|\int_{\Omega^{t}} \frac{\psi_{1, r z}}{r} \frac{u_{, r}}{r} u d x d t^{\prime}\right| \\
& \equiv K_{2}^{1}+K_{2}^{2} .
\end{aligned}
$$

Using \rf{2.15},\rf{2.16}, \rf{5.2} and \rf{3.24}, we obtain
$$
\begin{aligned}
\left|K_{2}^{1}\right| & \leq\left|\frac{\psi_{1, rz}}{r}\right|_{2, \Ot}\left|u_{, r r}\right|_{2, \Ot}|u|_{\iy, \Ot} \\
& \leqslant c \alpha\left(\theta_{*}, \theta^{*}\right) D_{2} D_{5}\left|\Gamma_{,z}\right|_{2, \Ot} 
\end{aligned}
$$
and
$$
\begin{aligned}
\left|K_{2}^{2}\right| & \leq\left|\frac{\psi_{1, rz}}{r}\right|_{2, \Ot}\left|\frac{1}{r} u_{,r}\right|_{2, \Ot}|u|_{\iy, \Ot} \\
& \leq c \alpha\left(\theta_{*}, \theta^{*}\right) D_{1} D_{2}\left|\G_{,z}\right|_{2, \Ot} .
\end{aligned}
$$
Summarizing,
\begin{equation}\lb{6.11}
\begin{split}
|J_{2}|\leqslant c \alpha\left(\theta_{*}, \theta^{*}\right)\left[D_{1} D_{2}+D_{2} D_{5}\right] \lp{\G_{,z}}{2,\Ot}{} \\
 +2\bigg|\intop_{\Omega^{t}} \frac{\psi_{1,z}}{r}\left(\frac{1}{r} u_{, r}\right)_{,r} u d x d t^{\prime}\bigg| .
 \end{split}
\end{equation}
Finally, we examine $J_{3}$. Using that $\psi=r \psi_{1}$ yields
$$
J_{3}=-\intop_{\Omega^{t}}\left(-r \psi_{1,zz}+3\psi_{1, r}+ r\psi_{1, r r}\right) \frac{1}{r} u_{, r} \frac{1}{r} u_{,z} d x d t^{\prime},
$$
Integrating by parts with respect to $z$, using that $\left.\psi_{1}\right|_{S_{2}}=0$ and \linebreak $\left.\psi_{1,zz}\right|_{S_{2}}=-\left.\Gamma\right|_{S_{2}}=0$, we obtain
$$
\begin{aligned}
J_{3} & =\int_{\Omega^{t}}\left(-\psi_{1,zzz}+\frac{3}{r} \psi_{1, r z}+\psi_{1, r r z}\right) \frac{1}{r} u_{, r} u d x d t^{\prime} \\
& +\int_{\Omega^{t}}\left(-\psi_{1,zz}+\frac{3}{r} \psi_{1, r}+\psi_{1, r r}\right)\left(\frac{1}{r} u_{,r}\right)_{,z} u d x d t^{\prime} \\
& \equiv J_{31}+J_{32} .
\end{aligned}
$$
Using \rf{2.15},\rf{2.16},\rf{3.14} and \rf{3.24}, we have
$$
\left|J_{31}\right| \leq \alpha\left(\theta_{* 1} \theta^{*}\right) D_{1} D_{2}\left|\Gamma_{,z}\right|_{2, \Ot}.
$$
To estimate $J_{32}$ we recall that
$$
-\psi_{1, r r}-\frac{3}{r} \psi_{1, r}-\psi_{1,zz}=\Gamma .
$$
Then $J_{32}$ takes the form
$$
J_{32}=-\intop_{\Omega^{t}}\left(2 \psi_{1,zz}+\Gamma\right)\left(\frac{1}{r} u_{, r}\right)_{,z} u d x d t^{\prime} \equiv J_{32}^{1}+J_{32}^{2} .
$$
Continuing,
$$
\begin{aligned}
 \left|J_{32}^{1}\right|& \leq c\left|\intop_{\Omega^{t}} \frac{\psi_{1,zz}}{r^{1-\varepsilon_{0}}} u_{,rz} u^{1-\varepsilon_{0}} v_{\varphi}^{\varepsilon_{0}} d x d t\right| \\
& \leq c \alpha\left(\theta_{*}, \theta^{*}\right) D_{2}^{1-\varepsilon_{0}}\left|v_{\varphi}\right|_{\infty , \Omega}^{\varepsilon_{0}}\left|\frac{\psi_{1,zz}}{r^{1-\varepsilon_{0}}}\right|_{2, \Omega^{t}}\left|u_{, r z}\right|_{2, \Omega^{t}},
\end{aligned}
$$
where we used \rf{2.16}. Finally, we have
$$
\left|J_{32}^{1}\right| \leq c \alpha\left(\theta_{*}, \theta^{*}\right) D_{2}^{1-\varepsilon_{0}} D_{4} \frac{R^{\varepsilon_{0}}}{\varepsilon_{0}}\left|v_{\varphi}\right|_{\iy\Ot}^{\varepsilon_{0}}\left|\Gamma_{,z}\right|_{2, \Omega t^{t}}.
$$
Next,
$$
\begin{aligned}
\left|J_{32}^{2}\right| &\leq \intop_{\Omega^{t}}\left|\frac{\G}{r^{1-\varepsilon_{0}}}\right|\left|u_{,r z}\right||u|^{1-\varepsilon_{0}}\left|v_{\varphi}\right|^{\varepsilon_{0}} d x d t^{\prime} \\
& \leq c \alpha\left(\theta_{*}, \theta^{*}\right) D_{2}^{1-\varepsilon_{0}} D_{4} \frac{R^{\varepsilon_{0}}}{\varepsilon_{0}}\left|v_{\varphi}\right|_{\infty, \Omega^{t}}^{\varepsilon_{0}}\left|\Gamma_{,r}\right|_{2, \Omega^{t}}.
\end{aligned}
$$
Summarizing,
\begin{equation}\lb{6.12}
|J_{3}| \leq c \alpha\left(\theta_{*}, \theta^{*}\right)\left[D_{1} D_{2}\left|\Gamma_{,z}\right|_{2, \Ot}
+D_{2}^{1-\varepsilon_{0}} D_{4} \frac{R_{0}^{\varepsilon_{0}}}{\varepsilon_{0}}\left|v_{\vp}\right|_{\iy, \Ot}^{\varepsilon_{0}}|\nabla \Gamma|_{2, \Omega^{t}}\right].
\end{equation}
Using estimates \rf{6.10}, \rf{6.11}, \rf{6.13} in \rf{6.7} implies
\begin{equation}\lb{6.13}
    \begin{aligned}
|J|& \leq\left|\intop_{\Omega^{t}} \frac{\Psi_{1,z}}{r^{2}} u_{,zz} u d x d t\right|+2\left|\intop_{\Omega^{t}} \frac{\psi_{1,z}}{r}\left(\frac{1}{r} u_{, r}\right)_{,r} u d x d t\right|\\
& +c\al\left(\theta_{*}, \theta^{*}\right)\bigg[\frac{R^{\varepsilon_{0}}}{\varepsilon_{0}}\left(1+D_{1}\right) D_{2}^{1-\varepsilon_{0}}\lp{\vphi}{\iy,\Ot}{\ve_0}  \\
&+D_{1} D_{2}+D_{2} D_{4}+D_{2} D_{5}\bigg]
|\nabla \Gamma|_{2, \Ot} .
\end{aligned}
\end{equation}
Using estimates \rf{6.4},\rf{6.5},\rf{6.13} in \rf{6.3} implies the inequality
\begin{equation}\lb{6.14}
    \begin{aligned}
&|\omega_{r}(t)|_{2, \Omega}^{2}+\omega_{z}(t)|_{2, \Omega}^{2}+\nu\left(|\nb \omega_{r}\right|_{2, \Omega t}^{2}+\left|\nabla \omega_{z}\right|_{2, \Ot}^{2}+|\Phi|_{2, \Ot}^{2})\\
& \leq\left|\intop_{\Ot} \frac{\Psi_{1,z}}{r^{2}} u_{,zz} u d x d t^{\prime}\right|+2\bigg|\intop_{\Omega^{t}} \frac{\psi_{1,z}}{r}\left(\frac{1}{r} u_{, r}\right)_{, r} u d x d t^{\prime}\bigg|+c \alpha\left(\theta_{* }, \theta^{*}\right)\cdot \\
&  \qquad\cdot\bigg[\frac{R^{\varepsilon_{0}}}{\varepsilon_{0}}\left(1+D_{1}\right) D_{2}^{1-\varepsilon_{0}}\left|v_{\varphi}\right|_{\iy, \Ot}^{\varepsilon_{0}}+D_{1} D_{2} +D_{2}\left(D_{4}+D_{5}\right)\bigg]\cdot\\
& \qquad\cdot |\nb \Gamma|_{2, \Ot}+c| f_{\vp}|_{2, S_{1}^{t}}\left(D_{4}+D_{5}\right) +c \alpha\left(\theta_{*}, \theta^{*}\right) D_{0}\left|f_{\vp}\right|_{2, \Ot}\\
& \qquad+c\left(\left|F_{r}\right|_{6 / 5,2, \Ot}^{2}+\left|F_{z}\right|_{6 / 5,2, \Ot}^{2}\right) 
 +\left|\omega_{r}(0)\right|_{2, \Omega}^{2}+\left|\omega_{z}(0)\right|_{2, \Omega}^{2} .
\end{aligned}
\end{equation}
Recalling that $\omega_{r}=-\frac{1}{r} u_{,z}, \,\omega_{z}=\frac{1}{r} u_{, r}$(see \rf{1.15}) and applying the H\"older and Young inequalities to the first term on the r.h.s. of \rf{6.14} we bound it by
$$
\varepsilon_{1}\left|\omega_{r, z}\right|_{2, \Ot}^{2}+\frac{1}{4 \varepsilon_{1}} \intop_{\Omega^{t}} \frac{\psi_{1,z}^{2}}{r^{2}} u d x d t^{\prime} \equiv L_{1} .
$$
The second term in $L_{1}$ can be written in the form
\begin{equation*}
\intop_{\Omega^{t}} \frac{\psi_{1,z}^{2}}{r^{2\left(1-\varepsilon_{0}\right)}} \frac{u^{2}}{r^{2 \varepsilon_{0}}} d x d t^{\prime} \leq D_{2}^{2\left(1-\varepsilon_{0}\right)}\left|v_{\varphi}\right|_{\infty, \Omega^{t}}^{2 \varepsilon_{0}} \intop_{\Omega^{t}} \frac{\psi_{1,z}^{2}}{r^{2(1-\varepsilon_{0})}} d x d t^{\prime},
\end{equation*}
where $\varepsilon_{0}$ can be chosen as small as we need. By the Hardy inequality
$$
\intop_{\Omega^{t}} \frac{\psi_{1 ,z}^{2}}{r^{2\left(1-\varepsilon_{0}\right)}} d x d t^{\prime} \leq \frac{R^{2 \varepsilon_{0}}}{\varepsilon_{0}^{2}} \intop_{\Omega^{t}} \psi_{1, rz}^{2} d x d t^{\prime} \text {. }
$$
Applying the interpolation inequality \rf{2.21} (see $[$ BIN, Ch. 3, sect.15])
$$
\intop_{\Omega} \psi_{1, rz}^{2} d x \leqslant\left(\intop_{\Omega}\left|\nabla^{2} \psi_{1,z}\right|^{2} d x\right)^{\theta}\left(\intop_{\Omega} \psi_{1,z}^{2} d x\right)^{1-\theta},
$$
where $\theta$ satisfies the equality
$$
\frac{3}{2}-1=(1-\theta) \frac{3}{2}+\theta\left(\frac{3}{2}-2\right)\, \text { so } \,\theta=1 / 2 \text {. }
$$
Using \rf{2.18}, we get
$$
\intop_{\Ot} \psi_{1,zr}^{2} d x \les c\left|\nb^{2} \psi_{1,z}\right|_{2, \Omega}\left|\psi_{1,z}\right|_{2, \Omega} \les cD_{1}\left|\nb^{2} \psi_{1,z}\right|_{2, \Omega} .
$$
Summarizing,
\begin{equation}\lb{6.15}
    \begin{aligned}
& \left|\intop_{\Omega^{t}} \frac{\psi_{1,z}}{r^{2}} u_{,zz} u d x d t\right| \les \varepsilon_{1}\left|\omega_{r,z}\right|_{2, \Ot}^{2} \\
 &\quad + \frac{c}{4 \varepsilon_{1}} \alpha( \theta_{*}, \theta^{*}) D_{1} D_{2}^{2\left(1-\varepsilon_{0}\right)} \frac{R^{2 \varepsilon_{0}}}{\varepsilon_{0}^{2}}\left|v_{\varphi}\right|_{\iy, \Omega^{t}}^{2 \varepsilon_{0}}\left|\G_{,z}\right|_{2, \Omega^{t}}.
\end{aligned}
\end{equation}
Similarly, the second term on the r.h.s. of \rf{6.14} is bounded by
\begin{equation}\lb{6.16}
\begin{aligned}
& \left|\intop_{\Omega^{t}} \frac{\psi_{1,z}}{r}\left(\frac{1}{r} u_{, r}\right)_{, r} u d x d t^{\prime}\right| \les \varepsilon_{2}\left|\omega_{z, r}\right|_{2, \Ot}^{2} \\
+ & \frac{c}{4 \varepsilon_{2}} \alpha\left(\theta_{*}, \theta^{*}\right) D_{1} D_{2}^{2\left(1-\varepsilon_{0}\right)} \frac{R^{2 \varepsilon_{0}}}{\varepsilon_{0}^{2}}\left|v_{\varphi}\right|_{\iy, \Omega^{t}}^{2 \varepsilon_{0}}\left|\Gamma_{,z}\right|_{2, \Ot}.
\end{aligned}
\end{equation}
Using  \rf{6.15} and \rf{6.16} in \rf{6.14} yields \rf{6.1} .
\end{proof}
\begin{lemma}\label{lemma 6.2} Assume that $D_1,D_2$ are defined in Section \ref{ss2.4},  \linebreak  $v_\vp(0)\in L_\iy(\O),f_\vp/r\in L_1(0,t;L_\iy(\O)),\,\t_*\le \t\le\t^*$.
Then \begin{equation}\label{6.17}
\lp{v_\vp(t)}{\iy,\O}{}    \les\fr {D_2}{\sqrt{\nu}}D_1^{1/4}X^{3/4}+D_{11},
\end{equation}
where 
\begin{equation}\label{6.18}
D_{11}=D_2^{1/2}    \phi(\t_*,\t^*)\ll|\fr{f_\vp}r\rr|_{\iy,1,\Ot}^{1/2}+\lp{v_\vp(0)}{\iy,\O}{}.
\end{equation}

\end{lemma}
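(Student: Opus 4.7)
The plan is to derive the $L^\iy$ bound by testing equation \rf{1.7}$_2$ for $\vphi$ with $\vphi|\vphi|^{s-2}$ for $s>2$ and passing $s\to\iy$, exploiting the identity $\vphi=u/r$ together with the swirl maximum principle $|u|_{\iy,\Ot}\le D_2$ of Lemma \ref{lemma 2.5}. After the test and integration over $\O$, the convective term vanishes by $\divv v=0$ and $v\cdot\bar n|_S=0$, and integration by parts in the viscous term, using $\vphi|_{S_1}=0$ and $v_{\vp,z}|_{S_2}=0$, yields the non-negative dissipation
\[
\frac{4\nu(s-1)}{s^{2}}\bigl|\nabla|\vphi|^{s/2}\bigr|_{2,\O}^{2}+\nu\io\frac{|\vphi|^{s}}{r^{2}}\,dx
\]
on the left-hand side. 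The forcing is estimated by replacing one power $|\vphi|=|u|/r\le D_2/r$, giving at most $\phi(\t_*,\t^*)D_2\io|f_\vp/r|\,|\vphi|^{s-2}\,dx$; after sending $s\to\iy$, integrating in time, and applying a Young-type absorption followed by a square root, this is exactly the term that generates the $D_{11}$ contribution in \rf{6.17}.

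The main difficulty is the indefinite obstacle $\io(v_r/r)|\vphi|^{s}\,dx$. Using $v_r/r=-\ps_{,z}$ from \rf{1.18}, I would integrate by parts in $z$---the boundary integral vanishes since $\ps|_{S_2}=0$, recall \rf{1.17}---and substitute $v_{\vp,z}=-r\Phi$ to rewrite this term as
\[
\io\frac{v_r}{r}|\vphi|^{s}\,dx=-s\io\ps\,u\,\Phi\,|\vphi|^{s-2}\,\mathrm{sgn}(\vphi)\,dx.
\]
The swirl bound $|u|\le D_2$ absorbs a troublesome power of $\vphi$, the factor $|\Phi|_{2,\O}\le X$ provides one power of $X$, and the $H^{2}$ estimate $\|\ps\|_{H^{2}(\O)}\le c|\G|_{2,\O}\le cX$ of Lemma \ref{l3.1}, combined with the three-dimensional embedding $H^{2}(\O)\hookrightarrow L^{\iy}(\O)$, provides the second power of $X$.

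To recover the precise exponents $D_{1}^{1/4}X^{3/4}$ and the $\nu^{-1/2}$ prefactor, I would interpolate the remaining $|\vphi|^{s-2}$ factor between the $L^{2}$-energy $|\vphi|_{2,\O}\le D_1$ of Lemma \ref{lemma 2.4} and the diffusive gain $|\nabla|\vphi|^{s/2}|_{2,\O}^{2}$ using the Sobolev embedding $H^{1}(\O)\hookrightarrow L^{6}(\O)$ applied to $|\vphi|^{s/2}$ together with a Gagliardo--Nirenberg interpolation; sending $s\to\iy$ then transforms $|\vphi|_{s,\O}$ into $|\vphi|_{\iy,\O}$ and the resulting differential inequality integrates in $t$ to \rf{6.17}. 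The principal obstacle will be the bookkeeping of the $s$-prefactor produced by the integration by parts in the $(v_r/r)$-term: only a careful matching of the Sobolev gain on $|\vphi|^{s/2}$ against the Gagliardo--Nirenberg exponents with base $|\vphi|_{2,\O}\le D_1$ permits the $s$ to be absorbed into the diffusive left-hand side while simultaneously producing the exponents $1/4$ and $3/4$ and the $D_{2}/\sqrt{\nu}$ scaling stated in the lemma.
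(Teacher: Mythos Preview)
Your handling of the forcing term and the overall $L^s$-testing strategy match the paper. The gap is in the treatment of the term $\io(v_r/r)|\vphi|^s\,dx$. Integrating by parts in $z$ produces the factor $s$ in
\[
s\io \ps\,u\,\Phi\,|\vphi|^{s-2}\,dx,
\]
and this factor cannot be absorbed when you pass $s\to\iy$: the gradient dissipation on the left carries a coefficient $4\nu(s-1)/s^2\sim 1/s$, so attempting to hide an $O(s)$ right-hand side into it costs a factor $s^2$ and destroys the limit. Your proposed Gagliardo--Nirenberg bookkeeping on $|\vphi|^{s/2}$ does not close this; moreover your crude bounds $|\Phi|_{2,\O}\le X$ and $|\ps|_{\iy,\O}\le cX$ already give $X^{2}$, and there is no interpolation mechanism that brings this down to $X^{3/4}$.

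The paper avoids the integration by parts altogether. Writing $v_r/r=-\ps_{,z}$ and $|\vphi|=|u|/r$, it uses the Young inequality against the \emph{weighted} dissipation $\nu\io|\vphi|^s/r^2\,dx$ on the left:
\[
\io \ps_{,z}|\vphi|^s\,dx\le \ve\io\fr{|\vphi|^s}{r^2}\,dx+\fr{D_2^2}{4\ve}\io\ps_{,z}^2|\vphi|^{s-2}\,dx,
\]
so that after H\"older the surviving term is $(D_2^2/\nu)\,|\ps_{,z}|_{s,\O}^2|\vphi|_{s,\O}^{s-2}$, with no $s$-prefactor. Dividing by $|\vphi|_{s,\O}^{s-2}$, letting $s\to\iy$ and integrating in time leaves $(D_2^2/\nu)\int_0^t|\ps_{,z}|_{\iy,\O}^2\,dt'$. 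The exponents $D_1^{1/4}X^{3/4}$ then come from the three-dimensional interpolation $|\ps_{,z}|_{\iy,\O}\le c|\ps_{,z}|_{2,\O}^{1/4}\hp{\ps_{,z}}{2,\O}{3/4}$, together with $|\ps_{,z}|_{2,\Ot}\le cD_1$ (Lemma~\ref{lemma 2.6}) and the $H^3$ estimate of Lemma~\ref{l3.2}, which gives $|\nb^2\ps_{,z}|_{2,\O}\le c|\G_{,z}|_{2,\O}\le cX$. Note that the relevant stream-function estimate is Lemma~\ref{l3.2}, not Lemma~\ref{l3.1}.
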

\begin{proof}
Multiplying \rf{1.7}$_2$ by $v_\vp|v_\vp|^{s-2}$ and integrating over $\O$  yields
\begin{equation}\lb{6.19}
    \begin{split}
        &\fr1s \dt \lp{v_\vp}{s,\O}s+\fr{4\nu (s-1)}{s^2}\ll|\nb|v_\vp|^{s/2}\rr|_{2,\O}^2+\nu\io\fr{|v_\vp|^{s/2}}{r^2}dx\\
    &=\io \ps_{,z}|v_\vp|^sdx+\io\al(\t)f_\vp v_\vp ^{s-2}dx
    \end{split}
\end{equation}
The first term on the r.h.s. of \rf{6.19} is estimated by
\[
\ve\io \fr{|v_\vp|^{s}}{r^2}dx+\fr{D_2^2}{4\ve}\io \ps_{,z}^2|v_\vp|^{s-2}dx
\]
The second integral on the r.h.s is estimated by 
\[\begin{split}
    &\io \al(\t)|f_\vp||v_\vp|^{s-1}dx=\io \al(\t)\ll|\fr{f_\vp}r\rr|r|v_\vp|^{s-1}dx\\
&\le D_2\phi(\t_*,\t^*)\io \ll|\fr{f_\vp}r\rr||v_\vp |^{s-2}dx\\
& \le D_2\phi(\t_*,\t^*)\ll|\fr{f_\vp}r\rr|_{s/2,\O}|v_\vp |_{s,\O}^{s-2}.
\end{split}
\]
    In view of the above estimates inequality \rf{6.19} reads
    \[
    \begin{split}
        \fr12 & \dt \lp {v_\vp} {s,\O}s\le\fr{D_2^2}{2\nu}\lp {\ps_{,z}}{s,\O}{s-2}\\
        &\le D_2\phi(\t_*,\t^*)\ll|\fr{f_\vp}r\rr|_{s/2,\O}|v_\vp |_{s,\O}^{s-2}.
    \end{split}
        \]
        Simplifying, we get 
        \[
        \dt \lp{v_\vp}{s,\O}2 \le \fr{D_2^2}{\nu}  \lp{\ps_{,z}}{s,\O}2 +2 D_2\phi(\t_*,\t^*)\ll|\fr{f_\vp}r\rr|_{s/2,\O}.
        \]
        Integrating the inequality with respect to time and passing with $s$ to $\iy$ we get
    \[\begin{split}
        & \lp{v_\vp}{\iy,\O}2\le \fr{D_2^2}\nu\intop_0^t \lp{\ps_{,z}}{\iy,\O}2 dt'+2 D_2\phi(\t_*,\t^*)\ll|\fr{f_\vp}r\rr|_{\iy,1,\Ot}\\
       &+\lp{v_\vp(0)}{\iy,\O}{2}.  
    \end{split}      
\]
Using the interpolation 
\[
|\ps_{,z}|_{\iy,\O}^2\le \lp{\ps_{,z}}{2,\O}{1/4}\lp{D^2\ps_{,z}}{{2,\O}}{3/4}
\]
and \rf{2.17} we obtain 
\[
\begin{split}
    \lp{v_\vp}{\iy,\O}2 &\le \fr{D_2^2}{\nu}D_1^{1/2}\lp{\G_{,z}}{2,\Ot}{3/2}+\phi(\t_*,\t^*)\ll|\fr{f_\vp}r\rr|_{\iy,1,\Ot}\\
    &+\lp{v_\vp}{\iy,\O}2.
\end{split}
\]
The above inequality implies \rf{6.18} and concludes the proof.
\end{proof}
  \begin{lemma} \label{lemma 6.3}
Assume that for any regular solution to \rf{1.1}-\rf{1.3} there exist positive constants  $c_0,c_{*}$ such that
\begin{equation}\lb{6.20}
    \fr{\lp{\vphi}{s,\iy,\Ot}{}}{\lp{\vphi}{\iy,\Ot}{}}\geqslant c_0,\qquad \fr{1}{\lp{\vphi}{\iy,\Ot}{}}\les c_*
\end{equation}
Then for $f_{\vp} \in L_{10 / 7}\left(\Omega^{t}\right), v_{\varphi}(0) \in L_{s}(\Omega), s \geqslant 2$   we have
\begin{equation}\lb{6.21}
    \fr12 \lp{\vphi}{s,\iy,\Ot}{}\les \fr{D_2^2 D_1^2}{c_0^{s-2}}+\fr{\lp{f_\vp}{10/7,\Ot}{}D_1}{c_0^{s-2}}+\fr12\lp{\vphi(0)}{s,\O}{}\\
    \equiv D_{12}.
\end{equation}
\end{lemma}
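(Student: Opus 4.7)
\medskip
\noindent\emph{Proof plan.} The plan is to mimic the argument of Lemma \ref{lemma 6.2} but stop at a finite $s$ rather than passing to $s\to\infty$. I multiply equation \rf{1.7}$_2$ by $\vphi|\vphi|^{s-2}$, integrate over $\O$ and proceed exactly as in the derivation of \rf{6.19}: the convective term $v\cdot\nb\vphi$ vanishes by $\divv v=0$ and $v\cdot\bar n|_S=0$, while the coupling $(v_r/r)\vphi$ combined with the multiplier yields $\io\psi_{1,z}|\vphi|^s dx$ after substituting $\psi_{1,z}=-v_r/r$ from \rf{1.18}. The resulting identity reads
\[
\fr{1}{s}\dt\lp{\vphi}{s,\O}{s}+\fr{4\nu(s-1)}{s^2}\bigl|\nb|\vphi|^{s/2}\bigr|_{2,\O}^{2}+\nu\io\fr{|\vphi|^s}{r^2}dx=\io\psi_{1,z}|\vphi|^s dx+\io\al(\t)f_\vp\vphi|\vphi|^{s-2}dx.
\]

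The two right-hand terms are then estimated so as to extract the factor $\lp{\vphi}{\iy,\O}{s-2}$. In the convection integral I decompose $|\vphi|^s=|\vphi|^{s-2}v_\vp^2$ with $v_\vp^2=(u/r)\vphi$; the swirl bound $|u|_{\iy,\O}\les D_2$ from Lemma \ref{lemma 2.5} and two applications of Cauchy--Schwarz (in space and in time), combined with the weighted energy estimate $\lp{v_r/r}{2,\Ot}{}+\lp{\vphi/r}{2,\Ot}{}\les cD_1$ from Lemma \ref{lemma 2.4}, produce
\[
\ll|\iot\psi_{1,z}|\vphi|^s dx dt'\rr|\les D_2D_1^2\lp{\vphi}{\iy,\Ot}{s-2}.
\]
For the forcing I likewise factor $\lp{\vphi}{\iy,\Ot}{s-2}$ out and bound the remaining integral $\iot|f_\vp||\vphi|\,dx\,dt'$ via H\"older in space--time with conjugate exponents $(10/7,10/3)$, exploiting the parabolic Sobolev embedding $V(\Ot)\hookrightarrow L_{10/3}(\Ot)$ and the bound $\lp{\vphi}{V(\Ot)}{}\les D_1$, which gives
\[
\ll|\iot\al f_\vp\vphi|\vphi|^{s-2}dx dt'\rr|\les\phi(\t_*,\t^*)D_1\lp{f_\vp}{10/7,\Ot}{}\lp{\vphi}{\iy,\Ot}{s-2}.
\]

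Now I invoke assumption \rf{6.20}, $\lp{\vphi}{\iy,\Ot}{}\les c_0^{-1}\lp{\vphi}{s,\iy,\Ot}{}$, to replace $\lp{\vphi}{\iy,\Ot}{s-2}$ by $c_0^{-(s-2)}\lp{\vphi}{s,\iy,\Ot}{s-2}$. Dropping the nonnegative dissipation terms, integrating in $t$ and taking the supremum over $t\in(0,T)$, I arrive at a schematic inequality of the form
\[
\fr{1}{s}\lp{\vphi}{s,\iy,\Ot}{s}\les\fr{D_2^2D_1^2+D_1\lp{f_\vp}{10/7,\Ot}{}}{c_0^{s-2}}\;\lp{\vphi}{s,\iy,\Ot}{s-2}+\fr{1}{s}\lp{\vphi(0)}{s,\O}{s}.
\]
A Young inequality with conjugate exponents $(s/(s-2),s/2)$, tuned so that a small multiple of $\lp{\vphi}{s,\iy,\Ot}{s}$ is absorbed into the left, eliminates the implicit self-dependence; taking $s$-th roots then yields \rf{6.21}, the coefficient $\tfrac12$ on $\lp{\vphi(0)}{s,\O}{}$ emerging from the specific choice of the Young parameter.

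The main technical obstacle is matching the precise constants in $D_{12}$: whether the convection estimate carries $D_2$ or $D_2^2$ depends on the choice between direct Cauchy--Schwarz and an $\ve$-Young splitting that absorbs part of $\nu\io|\vphi|^s/r^2$ against the dissipation on the left; likewise, the Young exponents in the closing step must be calibrated so that the linear-in-data form of \rf{6.21}, rather than a square-root form, emerges together with the stated coefficient $\tfrac12$ on the initial-condition term.
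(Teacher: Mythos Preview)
Your energy identity and the treatment of the two right-hand terms are essentially the same as in the paper: the convection term is reduced to $\lp{\vphi}{\iy,\O}{s-2}$ times a quantity controlled by $D_2^2\io v_r^2/r^2\,dx$ (the paper does this by an $\ve$-Young splitting against the weighted dissipation rather than by a direct Cauchy--Schwarz, which accounts for the $D_2^2$ instead of $D_2$), and the forcing term is handled via the $(10/7,10/3)$ H\"older pairing exactly as you describe.

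The genuine difference is in the closing step. You propose to integrate $\tfrac{1}{s}\dt\lp{\vphi}{s,\O}{s}$ in time and then apply a Young inequality to absorb the self-referential factor $\lp{\vphi}{s,\iy,\Ot}{s-2}$. The paper instead exploits the algebraic identity
\[
\fr{1}{s}\dt\lp{\vphi}{s,\O}{s}=\lp{\vphi}{s,\O}{s-2}\cdot\fr{1}{2}\dt\lp{\vphi}{s,\O}{2},
\]
and simply \emph{divides} the differential inequality by $\lp{\vphi}{s,\O}{s-2}$ before integrating in time. This turns the inequality into a closed ODE for $\lp{\vphi}{s,\O}{2}$ with right-hand side $\bar f_s^{-(s-2)}$ times integrable-in-time quantities, where $\bar f_s=\lp{\vphi}{s,\O}{}/\lp{\vphi}{\iy,\O}{}\geqslant c_0$ by \rf{6.20}. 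Integration then gives \rf{6.21} (which is really an estimate for the squared norm, cf.\ \rf{6.30}) with the coefficient $\tfrac12$ appearing for free. To justify the division the paper first disposes of the case $\lp{\vphi}{s,\iy,\Ot}{}\les c_1$, where the desired conclusion is trivial. Your Young-inequality closing would also work and does not need that case split, but it produces the bound with an $s$-dependent constant and in the form $(\ldots)^{1/2}$ rather than the linear-in-data form stated; the paper's division trick is what makes the constants come out exactly as in \rf{6.21}.
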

\begin{proof}
 Assume that for any given positive $c_{1}$, 
 \begin{equation}\lb{6.22}
     \lp{\vphi}{s,\iy,\Ot}{}\les c_1.
 \end{equation}
Then \rf{6.1} yields
\begin{equation}\lb{6.23}
    \lp{\Phi}{2,\Ot}{}\les \phi(D)\ll(\lp{\vphi}{\iy,\Ot}{2\dl}+1 \rr)+\phi(D)
\end{equation}
and \rf{6.11}  gives
\begin{equation}\lb{6.24}
    \lp{\vphi}{\iy,\Ot}{}\les \phi(D)(X^{3/4}+1).
\end{equation}
Using \rf{6.22}, \rf{6.23}, \rf{6.24} and \rf{4.2} in \rf{4.1} yields
\begin{equation}\lb{6.25}
    X^2\les \phi(D)(1+X^{\fr34\dl})c_1^\ve X^{2-\fr{\t}{2}}+\phi(D).
\end{equation}
Since $\delta$ is an arbitrary small the above inequality implies the estimate 
\begin{equation}\lb{6.26}
    X\les \phi(D).
\end{equation}
Since \rf{6.22} implies estimate\rf{6.26} easily we restrict our considerations to the case 
\begin{equation}\lb{6.27}
    \lp{\vphi}{s,\iy,\Ot}{}\geqslant c_1.
\end{equation}
Multiply \rf{1.7}$_{2}$ by $v_{\vp}\left|v_{\vp}\right|^{s-2}$ and integrate over $\Omega$. Then we obtain
\begin{equation}\lb{6.28}
    \begin{aligned}
 \frac{1}{s} \frac{d}{d t}&\left|v_{\vp}\right|_{s, \Omega}^{s}+\frac{4 \nu(s-1)}{s^{2}}\ll|{\nb \lp{\vphi}{}{s/2}}\rr|_{2,\O}^{2}
 +\nu \io \frac{\left|v_{\varphi}\right|^{s}}{r^{2}} d x \\
= & -\io \frac{v_{r}}{r} \lp{\vphi}{}{s} d x+\io f_{\varphi} \vphi\left|\vphi\right|^{s-2} d x .
\end{aligned}
\end{equation}
Hence,
\begin{equation*}
    \begin{aligned}
 \frac{1}{s} \frac{d}{d t}&\left|v_{\vp}\right|_{s, \Omega}^{s}+\frac{4 \nu(s-1)}{s^{2}}\ll|{\nb \lp{\vphi}{}{s/2}}\rr|_{2,\O}^{2}
 +\fr{\nu}2 \io \frac{\left|v_{\varphi}\right|^{s}}{r^{2}} d x \\
= & \io v_{r}^2\lp{\vphi}{}{s} d x+\io |f_{\varphi}| |\vphi|^{s-1} d x .
\end{aligned}
\end{equation*}
Continuing, we have
\[
\lp{\vphi}{s,\O}{s-2}\fr12\dt \lp{\vphi}{s,\O}{2}\les D_2^2 \lp{\vphi}{\iy,\O}{s-2}\io\fr{v_r^2}{r^2 }dx+\lp{\vphi}{\iy,\O}{s-2}\io |f_{\varphi}| |\vphi|^{s-1} d x .
\]Since \rf{6.27} holds we have
\begin{equation}\lb{6.29}
    \fr12 \dt \lp{\vphi}{s,\O}{2}\les\fr{D_2^2}{\bar f^{s-2}}\io\fr{v_r^2}{r^2 }dx+\fr1{\bar f^{s-2}}\io |f_{\varphi}| |\vphi| d x .
\end{equation}
where
\[
\bar f=\fr{|\vphi|}{\lp{\vphi}{\iy,\O}{}},\quad \bar f_s=\ll(\io\bigg|\fr{|\vphi|}{\lp{\vphi}{\iy,\O}{}}\bigg |^s dx  \rr)^{1/s}
\]

Integrating \rf{6.29} with respect to time yields
\begin{equation}\lb{6.30}
    \fr12\lp{\vphi(t)}{s,\O}{2}\les \fr{D_2^2D_1^2}{\inf_t {\bar f_s^{s-2}}}+\fr{\lp{f_\vp }{10/7,\Ot}{} D_1}{\inf_t {\bar f_s^{s-2}}}+\fr12 \lp{\vphi(0)}{s,\O}{2}.
\end{equation}
The above inequality implies \rf{6.21}.

Now we add some comments concerning condition \rf{6.20}. We have that $\bar{f}_{\infty}=1$, $\bar{f}_{s} \les|\Omega|^{1 / s}$, where $|\Omega|$ is the measure of $\Omega$. Assuming that $\bar{f} \in C^{\alpha, \alpha / 2}\left(\Omega ^T\right)$, for any $\varepsilon>0$ there exists a set $A\subset \Omega$ having positive measure $|A|$ such that $|\bar{f}(x, t)| \geqslant 1-\varepsilon$ if $x \in A$.
Having that $\bar{f} \in C^{\alpha, \alpha / 2}\left(\Omega^{T}\right)$ we have that for any $t \in(0, T), \bar{f} \in C^{\alpha}(\Omega)$ so the measure $|A|$ can be assumed as independent of $t$.

Hence
$$
\intop_{\Omega}|\bar{f}(x, t)|^{s} d x \geqslant \intop_{A}|\bar{f}(x, t)|^{s} d x \geqslant|A|(1-\varepsilon)^{s}
$$
This implies that
$$
\bar{f}_s \geqslant|A|^{1 / s}(1-\varepsilon)
$$
This is a motivation for \rf{6.20}. However, \rf{6.20} is not proved.
  
\end{proof}

\section{Global estimate for regular solutions}\label{sec_reg}

\begin{lemma}\label{lemma 7.1}
Assume that $f\in W_2^{2,1}(\Omega^t),\,g\in W_2^{2,1}(\Ot), v(0)\in W_2^3(\Omega),$ \linebreak $\t (0)\in W_2^3(\O )$. 

Then for $t$ sufficiently small there exists a local solution to  problem \linebreak \rf{1.1}-\rf{1.4} such that $v\in W_2^{4,2}(\Omega^t),\, \t\in W_2^{4,2}(\Ot),\,\nabla p\in W_2^{2,1}(\Omega^t)$ and
\begin{multline}\label{7.1}
\|v\|_{W_2^{4,2}(\Omega^t)} +\hp{\t}{ W_2^{4,2}(\Omega^t)}{}+\|\nabla p\|_{W_2^{2,1}(\Omega^t)}\\
\leq C(\|f\|_{W_2^{2,1}(\Omega^t)}+ \|g\|_{W_2^{2,1}(\Omega^t)}+\|v(0)\|_{W_2^3(\Omega)}+\|\t(0)\|_{W_2^3(\Omega)}).
\end{multline}
\end{lemma}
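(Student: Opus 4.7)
The plan is to construct the local solution by applying the Banach fixed point theorem to a linearization of \rf{1.1}--\rf{1.4} in the natural $L_2$-based parabolic scale. The target space $W_2^{4,2}(\Omega^t)\times W_2^{4,2}(\Omega^t)$ for $(v,\theta)$ with $\nabla p\in W_2^{2,1}(\Omega^t)$ is precisely the maximal $L_2$-regularity space for the given data: the inhomogeneous parabolic trace theorem identifies $W_2^3(\Omega)$ with the trace at $t=0$ of $W_2^{4,2}(\Omega^t)$, so the hypotheses $v_0,\theta_0\in W_2^3(\Omega)$ and $f,g\in W_2^{2,1}(\Omega^t)$ are exactly those needed.

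First, for each $(\tilde v,\tilde\theta)$ in a closed ball $B_R$ of $W_2^{4,2}(\Omega^{T_0})\times W_2^{4,2}(\Omega^{T_0})$ centred on a fixed Stokes/heat extension of the initial data, I would define $\Lambda(\tilde v,\tilde\theta)=(v,\theta)$ as the solution of the decoupled linear problem
\begin{equation*}
\begin{split}
&\partial_tv-\nu\Delta v+\nabla p=\alpha(\tilde\theta)f-\tilde v\cdot\nabla\tilde v,\quad\divv v=0,\\
&\partial_t\theta-\kappa\Delta\theta=g-\tilde v\cdot\nabla\tilde\theta,
\end{split}
\end{equation*}
subject to \rf{1.3} and \rf{1.4}. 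The Stokes block is handled by Solonnikov's $L_2$ maximal-regularity theorem and the heat block by the classical Ladyzhenskaya--Solonnikov--Ural'tseva parabolic theory, both yielding
\[
\|v\|_{W_2^{4,2}(\Omega^t)}+\|\nabla p\|_{W_2^{2,1}(\Omega^t)}+\|\theta\|_{W_2^{4,2}(\Omega^t)}\les c\bigl(\|\mathrm{r.h.s.}\|_{W_2^{2,1}(\Omega^t)}+\|v_0\|_{W_2^3(\Omega)}+\|\theta_0\|_{W_2^3(\Omega)}\bigr).
\]

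Second, the nonlinear right-hand sides must be controlled in $W_2^{2,1}(\Omega^t)$. Using the embedding $W_2^{4,2}(\Omega^t)\hookrightarrow C([0,t];W_2^3(\Omega))\cap L_\infty(\Omega^t)$ and the Banach algebra property of $W_2^3(\Omega)$ in three dimensions, together with the smoothness of $\alpha$ on a bounded range of $\tilde\theta$ (guaranteed by Lemma \ref{lemma 2.2} applied to the linear heat problem), the quantities $\|\tilde v\cdot\nabla\tilde v\|_{W_2^{2,1}(\Omega^t)}$, $\|\tilde v\cdot\nabla\tilde\theta\|_{W_2^{2,1}(\Omega^t)}$ and $\|\alpha(\tilde\theta)f\|_{W_2^{2,1}(\Omega^t)}$ are estimated by polynomial expressions in the $W_2^{4,2}$-norms of $(\tilde v,\tilde\theta)$ multiplied by a positive power of $t$ coming from H\"older in time on lower-order factors. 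Choosing $R$ large compared to the norms of the data and then $T_0$ small, $\Lambda$ becomes a self-map of $B_R$ that is a strict contraction in the weaker norm $W_2^{2,1}(\Omega^{T_0})\times W_2^{2,1}(\Omega^{T_0})$. The Banach fixed point theorem then produces a unique solution in $B_R$, and \rf{7.1} follows by applying the linear estimate one last time at the fixed point.

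The main obstacle is verifying the maximal-regularity estimate in $\Omega$ with the mixed boundary conditions \rf{1.3}, since $\partial\Omega$ has dihedral edges where $S_1$ meets $S_2$, which is problematic for higher-order regularity. The remedy is to extend the unknowns by reflection in $z$ across $S_2$: $v_r,v_\varphi,\theta$ evenly and $v_z$ oddly. This parity is consistent with \rf{1.3} (in particular $v_z=\omega_\varphi=v_{\varphi,z}=0$ and $\theta_{,z}=0$ on $S_2$) and with the equations \rf{1.7}, \rf{1.2}, and it turns the problem into a system on a doubled periodic cylinder whose only boundary is the smooth lateral surface, where Solonnikov-type theory and parabolic Neumann theory apply directly. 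The compatibility conditions required by $W_2^{4,2}$-regularity at $t=0$ are then precisely those implied by $v_0,\theta_0\in W_2^3(\Omega)$ together with \rf{1.3}.
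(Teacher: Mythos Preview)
The paper does not prove this lemma at all: immediately after the statement it writes ``The proof is standard.'' Your proposal is a correct and detailed instantiation of that standard argument (Banach fixed point on top of $L_2$ maximal regularity for the Stokes and heat problems), so there is nothing to compare against. One minor remark: you do not need Lemma~\ref{lemma 2.2} to bound $\alpha(\tilde\theta)$ during the iteration, since $\tilde\theta\in W_2^{4,2}(\Omega^t)\hookrightarrow L_\infty(\Omega^t)$ already gives a uniform bound on $\tilde\theta$ over the ball $B_R$; Lemma~\ref{lemma 2.2} applies to solutions of \rf{1.2}, not to arbitrary elements of $B_R$.
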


The proof is standard.

\subsection{Global estimate for regular solutions}
We first introduce some functional analytic tools to handle the anisotropic Sobolev spaces.

\begin{definition}[Anisotropic Sobolev and Sobolev-Slobodetskii spaces]\label{d7.2}
We denote by
\begin{enumerate}
\item[1.] $W_{p,p_0}^{k,k/2}(\Omega^T)$, $k,k/2\in\N\cup\{0\}$, $p,p_0\in[1,\infty]$ -- the anisotropic Sobolev space with a mixed norm, which is a completion of $C^\infty(\Omega^T)$-functions under the norm
$$
\|u\|_{W_{p,p_0}^{k,k/2}(\Omega^T)}=\left(\int_0^T\left(\sum_{|\alpha|+2\alpha\le k}\int_\Omega |D_x^\alpha\partial_t^au|^p\right)^{p_0/p}\d t\right)^{1/p_0}.
$$
\item[2.] $W_{p,p_0}^{s,s/2}(\Omega^T)$, $s\in\R_+$, $p,p_0\in[1,\infty)$ -- the Sobolev-Slobodetskii space with the finite norm
\[  
\begin{split}
&\|u\|_{W_{p,p_0}^{s,s/2}(\Omega^T)}=\sum_{|\alpha|+2a\le[s]}\|D_x^\alpha\partial_t^au\|_{L_{p,p_0}(\Omega^T)}\\
&\quad+\bigg[\intop_0^T\!\!\bigg(\intop_\Omega\!\intop_\Omega\!\sum_{|\alpha|+2a= [s]}\hskip-10pt{|D_x^\alpha\partial_t^au(x,t)-D_{x'}^\alpha\partial_t^au(x',t)|^p\over |x-x'|^{n+p(s-[s])}}\d x\d x'\bigg)^{p_0/p}\hskip-5pt \d t\bigg]^{1/p_0}\\
&\quad+\bigg[\intop_\Omega\!\!\bigg(\intop_0^T\!\intop_0^T\!\sum_{|\alpha|+2a=[s]}\hskip-10pt {|D_x^\alpha\partial_t^au(x,t)-D_x^\alpha\partial_{t'}^au(x,t')|^{p_0}\over|t-t'|^{1+p_0({s\over 2}-[{s\over 2}])}}\d t\d t'\bigg)^{p/p_0}\hskip-5pt \d x\bigg]^{1/p}, 
\end{split}\]
where $a\in\N\cup\{0\}$, $[s]$ is the integer part of $s$ and $D_x^\alpha$ denotes the partial derivative in the spatial variable $x$ corresponding to multiindex $\alpha$. For $s$ odd the last but one term in the above norm vanishes whereas for $s$ even the last two terms vanish. We also use notation $L_p(\Omega^T)=L_{p,p}(\Omega^T)$, $W_p^{s,s/2}(\Omega^T)=W_{p,p}^{s,s/2}(\Omega^T)$.
\item[3.] $B_{p,p_0}^l(\Omega)$, $l\in\R_+$, $p,p_0\in[1,\infty)$ -- the Besov space with the finite norm
$$
\|u\|_{B_{p,p_0}^l(\Omega)}=\|u\|_{L_p(\Omega)}+\bigg(\sum_{i=1}^n\intop_0^\infty {\|\Delta_i^m(h,\Omega)\partial_{x_i}^ku\|_{L_p(\Omega)}^{p_0}\over h^{1+(l-k)p_0}}\d h\bigg)^{1/p_0},
$$
where $k\in\N\cup\{0\}$, $m\in\N$, $m>l-k>0$, $\Delta_i^j(h,\Omega)u$, $j\in\N$, $h\in\R_+$ is the finite difference of the order $j$ of the function $u(x)$ with respect to $x_i$ with
\begin{equation*}
\begin{split}
&\Delta_i^1(h,\Omega)u=\Delta_i(h,\Omega)\\
&=u(x_1,\dots,x_{i-1},x_i+h,x_{i+1},\dots,x_n)-u(x_1,\dots,x_n),\\
&\Delta_i^j(h,\Omega)=\Delta_i(h,\Omega)\Delta_i^{j-1}(h,\Omega)u\quad {\rm and}\quad \Delta_i^j(h,\Omega)u=0\\
&{\rm for}\quad x+jh\not\in\Omega.
\end{split}
\end{equation*}
In has been proved in \cite{G} that the norms of the Besov space $B_{p,p_0}^l(\Omega)$ are equivalent for different $m$ and $k$ satisfying the condition $m>l-k>0$.
\end{enumerate}
\end{definition}

We need the following interpolation lemma.

\begin{lemma}[Anisotropic interpolation, see {\cite[Ch. 4, Sect. 18]{BIN}}]\label{lemma 7.3}
Let $u\in W_{p,p_0}^{s,s/2}(\Omega^T)$, $s\in\R_+$, $p,p_0\in[1,\infty]$, $\Omega\subset\R^3$. Let $\sigma\in\R_+\cup\{0\}$, and
$$
\varkappa={3\over p}+{2\over p_0}-{3\over q}-{2\over q_0}+|\alpha|+2a+\sigma<s.
$$
Then $D_x^\alpha\partial_t^au\in W_{q,q_0}^{\sigma,\sigma/2}(\Omega^T)$, $q\ge p$, $q_0\ge p_0$ and there exists $\varepsilon\in(0,1)$ such that
$$
\|D_x^\alpha\partial_t^au\|_{W_{q,q_0}^{\sigma,\sigma/2}(\Omega^T)}\le\varepsilon^{s-\varkappa} \|u\|_{W_{p,p_0}^{s,s/2}(\Omega^t)}+c\varepsilon^{-\varkappa}\|u\|_{L_{p,p_0}(\Omega^t)}.
$$
We recall from \cite{B} the trace and the inverse trace theorems for Sobolev spaces with a mixed norm.
\end{lemma}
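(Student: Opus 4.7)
\textbf{Plan of proof for Lemma \ref{lemma 7.3}.} Since this is the classical anisotropic Gagliardo--Nirenberg interpolation of Besov--Il'in--Nikol'ski\u{\i} type, my strategy follows the general scheme used in \cite{BIN}. The first step is to remove the dependence on the domain: one invokes an extension operator $E\colon W^{s,s/2}_{p,p_0}(\Omega^T)\to W^{s,s/2}_{p,p_0}(\R^3\times\R)$ that is bounded simultaneously on all lower-order spaces. Such a Stein-type extension for anisotropic mixed-norm spaces on cylinders is standard (it is constructed by a localization together with Calder\'on-type reflection across the lateral boundary of $\Omega$ and across $t=0,T$). After extending, it suffices to prove the inequality for functions supported in a fixed compact set of $\R^3\times\R$, so we may replace $\Omega^T$ by $\R^3\times\R$ throughout.

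Next I would pass to a Littlewood--Paley (or equivalently harmonic-analytic) decomposition that respects the anisotropy, i.e.\ separate dyadic partitions in the spatial frequency variable $\xi\in\R^3$ and in the temporal frequency variable $\tau\in\R$, with the parabolic scaling $|\xi|^2\sim|\tau|$. Writing $u=\sum_{j,k}u_{jk}$ with $u_{jk}$ frequency-localized at $(|\xi|,|\tau|)\sim(2^j,2^{2k})$, the norm $\|u\|_{W^{s,s/2}_{p,p_0}}$ is equivalent to a weighted $\ell^{p_0}(\ell^p)$ combination of the $L^{p,p_0}$-norms of $u_{jk}$. Sobolev embedding (applied frequency-band by frequency-band via Bernstein's inequality) then gives, for every $j,k$, a bound
\[
\bigl\|D^\alpha_x\partial_t^a u_{jk}\bigr\|_{W^{\sigma,\sigma/2}_{q,q_0}}\lesssim 2^{\varkappa\,\max(j,k)}\,\|u_{jk}\|_{L^{p,p_0}},
\]
while the full norm controls $2^{s\,\max(j,k)}\|u_{jk}\|_{L^{p,p_0}}$. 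Thus the condition $\varkappa<s$ is exactly what makes the geometric series converge.

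The final step is the small-parameter trick. Split the dyadic sum at a level $N=N(\varepsilon)$: for low frequencies ($\max(j,k)\le N$) one uses Bernstein to bound each piece by $2^{\varkappa N}\|u\|_{L^{p,p_0}}$ and sums, picking up a factor $\sim 2^{\varkappa N}$; for high frequencies ($\max(j,k)>N$) one factors out $2^{(\varkappa-s)N}$ from $2^{\varkappa j}=2^{(\varkappa-s)j}\cdot 2^{sj}$ and controls the remainder by $\|u\|_{W^{s,s/2}_{p,p_0}}$. Setting $2^{-N}=\varepsilon$ produces the two terms $\varepsilon^{s-\varkappa}\|u\|_{W^{s,s/2}_{p,p_0}}+C\varepsilon^{-\varkappa}\|u\|_{L^{p,p_0}}$ claimed in the lemma.

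The main obstacle I anticipate is rigorously justifying the anisotropic Bernstein inequality in the mixed-norm setting $L^{p,p_0}$ with $p\neq p_0$, because the usual convolution argument for Bernstein relies on Young's inequality on a single $L^p$. One way around this is to apply Young separately in the spatial and temporal variables (Minkowski's integral inequality then lets the mixed norm commute with the convolution), which works precisely because the LP kernels factor as a product of a spatial bump and a temporal bump under the parabolic scaling. The cases $p=\infty$ or $p_0=\infty$ have to be handled via a routine duality / limiting argument. Once the anisotropic Bernstein estimate is in hand, the splitting above is entirely mechanical.
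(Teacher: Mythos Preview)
The paper does not supply its own proof of this lemma: it is quoted verbatim as a known result from \cite[Ch.~4, Sect.~18]{BIN}, so there is nothing in the paper to compare your argument against directly. Your Littlewood--Paley sketch is a legitimate route to the inequality, but it is not the one taken in \cite{BIN}. The Besov--Il'in--Nikol'ski\u{\i} proof proceeds via their integral-representation machinery: one writes $u$ as a sum of a mollified part $u_\varepsilon$ (convolution with an anisotropic kernel at scale $\varepsilon$) and a remainder, and then estimates $D_x^\alpha\partial_t^a$ of each piece directly from the kernel bounds, the high-order term absorbing $\|u\|_{W^{s,s/2}_{p,p_0}}$ and the mollified term giving the $\varepsilon^{-\varkappa}\|u\|_{L_{p,p_0}}$ contribution. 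No frequency decomposition or Bernstein inequality enters.

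Your approach buys a cleaner conceptual picture and generalizes more readily to other scales of spaces, at the cost of having to justify the mixed-norm Bernstein inequality (your proposed fix via separate Young inequalities in $x$ and $t$ is correct, since the parabolic LP multipliers do factor). The \cite{BIN} approach is more elementary in that it avoids Fourier analysis entirely and works directly with real-variable kernel estimates, which is why it handles the endpoint cases $p,p_0\in\{1,\infty\}$ without a separate duality argument. Either route yields the stated inequality; since the paper only cites the result, your sketch is an acceptable substitute, though you should be aware it is methodologically different from the source being invoked.
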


\begin{lemma}\label{lemma 7.4}
(traces in $W_{p,p_0}^{s,s/2}(\Omega^T)$, see \cite{B})
\begin{itemize}
\item[(i)] Let $u\in W_{p,p_0}^{s,s/2}(\Omega^t)$, $s\in\R_+$, $p,p_0\in(1,\infty)$. Then $u(x,t_0)=u(x,t)|_{t=t_0}\!$ for $t_0\in[0,T]$ belongs to $B_{p,p_0}^{s-2/p_0}(\Omega)$, and
$$
\|u(\cdot,t_0)\|_{B_{p,p_0}^{s-2/p_0}(\Omega)}\le c\|u\|_{W_{p,p_0}^{s,s/2}(\Omega^T)},
$$
where $c$ does not depend on $u$.
\item[(ii)] For given $\tilde u\in B_{p,p_0}^{s-2/p_0}(\Omega)$, $s\in\R_+$, $s>2/p_0$, $p,p_0\in(1,\infty)$, there exists a function $u\in W_{p,p_0}^{s,s/2}(\Omega^t)$ such that $u|_{t=t_0}=\tilde u$ for $t_0\in[0,T]$ and
$$
\|u\|_{W_{p,p_0}^{s,s/2}(\Omega^T)}\le c\|\tilde u\|_{B_{p,p_0}^{s-2/p_0}(\Omega)},
$$
where constant $c$ does not depend on $\tilde u$.
\end{itemize}
\end{lemma}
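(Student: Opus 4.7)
The statement is the classical anisotropic Gagliardo trace theorem for Sobolev spaces with a mixed norm, and it is attributed to Besov. My plan is to reduce to the case of $\Omega=\mathbb{R}^{3}$, exploit the parabolic scaling $s$ versus $s/2$ via Fourier or finite-difference characterizations, and for part (ii) construct an explicit extension operator using a parabolic convolution.

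First I would extend $u$ from $\Omega^{T}$ to $\mathbb{R}^{3}\times\mathbb{R}$ by a Stein-type reflection adapted to the anisotropic scaling. After localizing with a partition of unity and flattening $\partial\Omega$, reflecting across $\partial\Omega$ and across $t=0,T$ using higher-order reflections for fractional $s$ yields an extension whose $W^{s,s/2}_{p,p_{0}}(\mathbb{R}^{3}\times\mathbb{R})$-norm is controlled by the original one. The only anisotropic subtlety is that time differences must be weighted by $|t-t'|^{1+p_{0}(s/2-[s/2])}$, matching Definition \ref{d7.2}; this is why the reflection in the time direction must be done at the scale $\sqrt{|t|}$ rather than $|t|$.

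For part (i) I would use the equivalent norm on $W^{s,s/2}_{p,p_{0}}$ written in terms of finite differences $\Delta_{x}^{m}(h)D_{x}^{\alpha}\partial_{t}^{a}u$ in space and $\Delta_{t}^{m}(\tau)D_{x}^{\alpha}\partial_{t}^{a}u$ in time, with weights $h^{-1-p(s-[s])}$ and $|\tau|^{-1-p_{0}(s/2-[s/2])}$ respectively. Freezing $t=t_{0}$ and applying Minkowski's inequality between the $L_{p}$-spatial and $L_{p_{0}}$-temporal integrations gives
\[
\|u(\cdot,t_{0})\|_{B^{s-2/p_{0}}_{p,p_{0}}(\Omega)}\le c\,\|u\|_{W^{s,s/2}_{p,p_{0}}(\Omega^{T})}.
\]
The loss of $2/p_{0}$ in regularity is exactly the parabolic cost of evaluating at a single time slice: because two time derivatives cost as much as one spatial derivative, the codimension-one time slice loses twice what the spatial $1/p_{0}$-loss would suggest. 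The Besov norm from Definition \ref{d7.2}(3), defined through finite differences in the remaining spatial variables, arises naturally once the time integral is performed.

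For part (ii) I would construct an extension operator explicitly. Given $\tilde u\in B^{s-2/p_{0}}_{p,p_{0}}(\Omega)$, pick $\eta\in C^{\infty}_{c}(\mathbb{R}^{3})$ with $\int\eta=1$ and a time cutoff $\chi(t)$ equal to $1$ near $t_{0}$, and set
\[
u(x,t):=\chi(t)\int_{\mathbb{R}^{3}}\eta(y)\,\tilde u\bigl(x+\sqrt{|t-t_{0}|}\,y\bigr)\,dy.
\]
The parabolic dilation $\sqrt{|t-t_{0}|}$ encodes the $s$-versus-$s/2$ anisotropy, so applying $D^{\alpha}_{x}\partial_{t}^{a}$ turns into an integral of a finite difference of $\tilde u$ at scale $\sqrt{|t-t_{0}|}$; integrating against the Besov weight in $t$ reproduces exactly the Besov norm of $\tilde u$. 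The main obstacle, and the reason the result is nontrivial, is tracking sharp constants under the mixed $(p,p_{0})$ norm while simultaneously handling the fractional part of $s$: one has to interpolate between the Lebesgue directions and the Besov differences and invoke the equivalence of Besov norms with different $m,k$ from \cite{G}. This is precisely the technical content of \cite{B}.
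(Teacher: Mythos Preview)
The paper does not prove this lemma at all: it is stated with the attribution ``see \cite{B}'' and used as a black box in Section~\ref{sec_reg}. There is therefore no proof in the paper to compare your proposal against.

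Your sketch is a reasonable outline of the standard approach to anisotropic trace theorems (reduction to the whole space by extension, finite-difference characterization, parabolic Poisson-type extension for the inverse trace), and it correctly identifies the origin of the $2/p_{0}$ loss from the parabolic scaling. However, several steps are only gestured at rather than carried out: the existence of a bounded extension operator $W^{s,s/2}_{p,p_{0}}(\Omega^{T})\to W^{s,s/2}_{p,p_{0}}(\mathbb{R}^{3}\times\mathbb{R})$ for a Lipschitz cylinder and fractional $s$ with mixed norms is itself nontrivial and not established here; the ``Minkowski between $L_{p}$ and $L_{p_{0}}$'' step needs $p_{0}$ and $p$ in the right order and in general requires vector-valued interpolation rather than a bare Minkowski inequality; and the verification that your explicit extension lands in $W^{s,s/2}_{p,p_{0}}$ with the correct bound is exactly the hard computation you defer to \cite{B} and \cite{G}. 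Since the paper itself simply invokes the result, the honest statement is that your proposal is a plausible roadmap but not a proof, and the paper offers nothing further to check it against.
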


We need the following imbeddings between Besov spaces

\begin{lemma}[see {\cite[Th. 4.6.1]{T}}]\label{lemma 7.5}
|Let $\Omega\subset\R^n$ be an arbitrary domain.
\begin{itemize}
\item[(a)] Let $s\in\R_+$, $\varepsilon>0$, $p\in(1,\infty)$, and $1\le q_1\le q_2\le\infty$. Then
$$
B_{p,\infty}^{s+\varepsilon}(\Omega)\subset B_{p,1}^{s+\varepsilon}(\Omega)\subset B_{p,q_2}^s(\Omega)\subset B_{p,q_1}^s(\Omega)\subset B_{p,\infty}^{s-\varepsilon}(\Omega)\subset B_{p,1}^{s-\varepsilon}(\Omega).
$$
\item[(b)] Let $\infty>q\ge p>1$, $1\le r\le\infty$, $0\le t\le s<\infty$ and
$$
t+{n\over p}-{n\over q}\le s.
$$
Then $B_{p,r}^s(\Omega)\subset B_{q,r}^t(\Omega)$.
\end{itemize}
\end{lemma}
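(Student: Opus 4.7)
The plan is to reduce the chain in part (a) to two elementary Besov-space facts, which is essentially the content of the cited Triebel Theorem 4.6.1. I decompose the chain into three same-smoothness inclusions, namely $B_{p,\infty}^{s+\varepsilon}\subset B_{p,1}^{s+\varepsilon}$, $B_{p,q_2}^s\subset B_{p,q_1}^s$ for $q_1\le q_2$, and $B_{p,\infty}^{s-\varepsilon}\subset B_{p,1}^{s-\varepsilon}$; together with two smoothness-gap inclusions, $B_{p,1}^{s+\varepsilon}\subset B_{p,q_2}^s$ and $B_{p,q_1}^s\subset B_{p,\infty}^{s-\varepsilon}$. The latter two contain the essential content; the three same-smoothness links are purely a monotonicity bookkeeping in the fine index $p_0$.

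For the same-smoothness inclusions I would work directly from the norm in Definition \ref{d7.2} applied to the integrand $g(h)=\|\Delta_i^m(h,\Omega)\partial_{x_i}^k u\|_{L_p(\Omega)}/h^{l-k}$. Because $\Omega$ is bounded, $\Delta_i^m(h,\Omega) u$ vanishes once $h$ exceeds $\mathrm{diam}(\Omega)$, so the $h$-integral is effectively over $(0,h_0)$ with $h_0=\mathrm{diam}(\Omega)$; within this truncated range the appropriate H\"older-type estimate on the $L^{p_0}(\mathrm{d}h/h)$ scale, combined with the decay of the modulus of smoothness at $h=0$ that is already implied by membership in a Besov space, yields the inclusions in the direction required by the chain ($p_0=\infty$ into $p_0=1$, and $p_0=q_2$ into $p_0=q_1$). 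For the two smoothness-gap inclusions I exploit the strict gap $\varepsilon>0$ by rewriting the target integrand as $h^{\varepsilon}\cdot\|\Delta_i^m u\|_{L_p}/h^{l-k}$; the extra factor $h^\varepsilon$ buys the passage from any source fine index to any target fine index, after splitting the $h$-integral at $h=1$ and applying a Hardy-type bound in the spirit of Lemma \ref{lemma 2.7}.

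For part (b), the embedding $B_{p,r}^s\subset B_{q,r}^t$ under $t+n/p-n/q\le s$ and $q\ge p$ is the Besov analogue of the classical Sobolev embedding $W^{s,p}\hookrightarrow W^{t,q}$, and I would quote it directly from Theorem 4.6.1 of \cite{T}. Its proof rests on an atomic or Littlewood--Paley decomposition in which each atom supported on a cube of side $2^{-j}$ obeys a Bernstein-type inequality converting $L^p$ control into $L^q$ control at the cost of $2^{j(n/p-n/q)}$; the remaining dyadic decay $2^{j(s-t)}$ dominates by hypothesis, and summing over $j$ preserves the common fine index $r$.

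The main obstacle throughout is the careful bookkeeping of the direction of the $p_0$-monotonicity for an integral taken against the infinite measure $\mathrm{d}h/h$: on such a measure the $L^{p_0}$-scale is not automatically nested, and one must use both the truncation coming from $\mathrm{diam}(\Omega)<\infty$ and (for the smoothness-gap steps) the rescaling coming from $\varepsilon>0$ to pin down which inclusion holds. Once these two reductions are made explicit, the rest of the argument is routine and follows \cite{T}.
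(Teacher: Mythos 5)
The paper offers no proof of Lemma \ref{lemma 7.5} at all --- it is quoted directly from Triebel \cite{T}, Th.\ 4.6.1 --- so there is no authors' argument to compare yours against; the only question is whether your sketch is sound. Part (b) and the two smoothness--gap inclusions in part (a) are fine: the extra factor $h^{\varepsilon}$, a splitting of the $h$-integral at $h=1$, and a Hardy-type bound in the spirit of Lemma \ref{lemma 2.7} do give $B^{s+\varepsilon}_{p,q}(\Omega)\subset B^{s}_{p,q'}(\Omega)$ for \emph{all} fine indices $q,q'$, and the Bernstein/Littlewood--Paley route to (b) is the standard one.

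The genuine gap is in your treatment of the three same-smoothness links. First, truncating the $h$-integral to $(0,\mathrm{diam}\,\Omega)$ does not produce a finite measure: $dh/h$ still diverges logarithmically at $h=0$, so the H\"older nesting $L^{q_2}\subset L^{q_1}$ (for $q_1\le q_2$), valid on finite measure spaces, is not available; moreover the lemma allows $\Omega$ to be an arbitrary, possibly unbounded, domain, so the truncation itself is unjustified. Second, the decay at $h=0$ that membership in $B^{s}_{p,\infty}$ supplies for $g(h)=\|\Delta_i^m(h,\Omega)\partial_{x_i}^k u\|_{L_p(\Omega)}h^{-(l-k)}$ is only $g\in L^{\infty}(dh/h)$, which does not imply $g\in L^{1}(dh/h)$ near $0$ (a $g$ bounded below is the obvious obstruction, realized by any $u$ whose modulus of smoothness is exactly of order $h^{l}$). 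In fact the inclusions $B^{s}_{p,\infty}\subset B^{s}_{p,1}$ and $B^{s}_{p,q_2}\subset B^{s}_{p,q_1}$ for $q_1\le q_2$ are false: the correct monotonicity at fixed smoothness is the reverse one, $B^{s}_{p,q_1}\subset B^{s}_{p,q_2}$ for $q_1\le q_2$, exactly as recorded in the paper's own Lemma \ref{lemma 7.6}. The fine indices in the displayed chain of part (a) are evidently transposed relative to \cite{T}, and no argument can establish them as written. The robust content --- and all that Section \ref{sec_reg} needs --- is that an arbitrarily small gain of smoothness absorbs any change of fine index; your $h^{\varepsilon}$ argument proves precisely that, and the chain should be reassembled around it with the same-smoothness links reversed.
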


\begin{lemma}[see {\cite[Ch. 4, Th. 18.8]{BIN}}]\label{lemma 7.6}
Let $1\le\theta_1<\theta_2\le\infty$. Then
$$
\|u\|_{B_{p,\theta_2}^l(\Omega)}\le c\|u\|_{B_{p,\theta_1}^l(\Omega)},
$$
where $c$ does not depend on $u$.
\end{lemma}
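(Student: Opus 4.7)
The plan is to reduce the embedding $B^l_{p,\theta_1}(\Omega)\hookrightarrow B^l_{p,\theta_2}(\Omega)$ to a monotonicity statement for a weighted integral in the difference parameter $h$, treating the extremal case $\theta_2=\iy$ first and then interpolating. Since the $L_p(\Omega)$ contributions to both norms coincide, it suffices to control, for each $i$, the seminorm piece
\[
J_\theta(u):=\intop_0^\iy \fr{\|\Delta_i^m(h,\Omega)\pa_{x_i}^k u\|_{L_p(\Omega)}^{\theta}}{h^{1+(l-k)\theta}}\,\d h
\]
with $\theta=\theta_2$ by the same expression with $\theta=\theta_1$, interpreting the $\theta=\iy$ case as a supremum of $\|\Delta_i^m(h,\Omega)\pa_{x_i}^k u\|_{L_p(\Omega)}/h^{l-k}$ over $h>0$.

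First I would invoke the equivalence of the Besov norm for different admissible choices of $m,k$ (stated in the paper immediately after Definition \ref{d7.2}) to replace the raw finite difference by the modulus of smoothness
\[
\omega(h):=\sup_{0<|t|\le h}\|\Delta_i^m(t,\Omega)\pa_{x_i}^k u\|_{L_p(\Omega)},
\]
which is non-decreasing in $h$ by construction. This turns a raw $L_p$-type weighted integral into a weighted integral of a monotone function, which is the crucial technical ingredient.

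Next, for the limit case $\theta_2=\iy$, I would fix $h_0>0$ and exploit the monotonicity of $\omega$ to estimate
\[
J_{\theta_1}(u)\ge \intop_{h_0}^{2h_0}\fr{\omega(h)^{\theta_1}}{h^{1+(l-k)\theta_1}}\,\d h\ge c\,h_0^{-(l-k)\theta_1}\omega(h_0)^{\theta_1},
\]
and taking supremum over $h_0$ obtain $\sup_{h_0} \omega(h_0)/h_0^{l-k}\le c\,J_{\theta_1}(u)^{1/\theta_1}$, i.e.\ the embedding into $B^l_{p,\iy}$. For a finite $\theta_2>\theta_1$ I would then factor
\[
\fr{\omega(h)^{\theta_2}}{h^{1+(l-k)\theta_2}}=\left(\fr{\omega(h)}{h^{l-k}}\right)^{\theta_2-\theta_1}\cd\fr{\omega(h)^{\theta_1}}{h^{1+(l-k)\theta_1}},
\]
pull out the first factor as a uniform bound controlled by the previous step, and conclude $J_{\theta_2}(u)\le c\,J_{\theta_1}(u)^{\theta_2/\theta_1}$, which gives the sought inequality after taking a $\theta_2$-th root.

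The hard part will be the reduction to the monotone modulus of smoothness; once this is justified, the remaining argument is an elementary weighted-integral manipulation, effectively the classical $\ell^{\theta_1}\subset\ell^{\theta_2}$ inclusion translated to the continuous setting on $(0,\iy)$ with the Haar measure $\d h/h$. No additional structural hypothesis on $\Omega$ is required beyond those already implicit in Definition \ref{d7.2}.
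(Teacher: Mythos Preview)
The paper does not prove this lemma at all; it is simply quoted from \cite[Ch.~4, Th.~18.8]{BIN} and used as a black box. Your sketch is the standard argument and is essentially correct: replace the raw difference by the monotone modulus of smoothness, obtain the endpoint $\theta_2=\iy$ by integrating over a dyadic shell $[h_0,2h_0]$, and then factor $\omega(h)^{\theta_2}=\omega(h)^{\theta_2-\theta_1}\omega(h)^{\theta_1}$ for finite $\theta_2$. One small inaccuracy worth flagging: the equivalence statement recorded in the paper right after Definition~\ref{d7.2} concerns different admissible pairs $(m,k)$, not the passage from the raw difference $\|\Delta_i^m(h,\Omega)\pa_{x_i}^k u\|_{L_p}$ to the modulus $\omega(h)=\sup_{0<|t|\le h}\|\Delta_i^m(t,\Omega)\pa_{x_i}^k u\|_{L_p}$. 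The latter equivalence is what your argument actually uses; it is also classical and in \cite{BIN}, but it is a separate fact, so you should cite it as such rather than pointing to the $(m,k)$-equivalence.
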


\begin{lemma}[see {\cite[Ch. 4, Th. 18.9]{BIN}}] \label{lemma 7.7}
Let $l\in\N$ and $\Omega$ satisfy the $l$-horn condition.\\
Then the following imbeddings hold
$$
\begin{aligned}
    &\|u\|_{B_{p,2}^l(\Omega)}\le c\|u\|_{W_p^l(\Omega)}\le c\|u\|_{B_{p,p}^l(\Omega)}, &&1\le p\le 2,\\
&\|u\|_{B_{p,p}^l(\Omega)}\le c\|u\|_{W_p^l(\Omega)}\le c\|u\|_{B_{p,2}^l(\Omega)}, &&2\le p<\infty,\\
&\|u\|_{B_{p,\infty}^l(\Omega)}\le c\|u\|_{W_p^l(\Omega)}\le c\|u\|_{B_{p,1}^l(\Omega)},& &1\le p\le\infty.
\end{aligned}
$$
\end{lemma}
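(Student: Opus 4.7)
The plan is to reduce, via the $l$-horn condition on $\Omega$, to the case $\Omega=\R^n$ by invoking a standard extension operator that is bounded simultaneously on the Sobolev and Besov scales in question. On $\R^n$ one then appeals to the Littlewood--Paley characterization: with a dyadic frequency decomposition $u=\sum_k u_k$, the Besov norm $\|u\|_{B^l_{p,q}}$ is equivalent to $\bigl\|\bigl(2^{lk}\|u_k\|_{L^p}\bigr)_k\bigr\|_{\ell^q}$, while for $1<p<\infty$ the Sobolev norm $\|u\|_{W^l_p}$ is equivalent to the Triebel--Lizorkin norm $\bigl\|\bigl(\sum_k 2^{2lk}|u_k|^2\bigr)^{1/2}\bigr\|_{L^p}$ via the classical square--function identity. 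Since the three stated chains are translation invariant, extending them to $\R^n$ and then restricting back to $\Omega$ causes no loss.

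With these equivalent norms, the three nested inclusions reduce to elementary comparisons of sequence-space norms. The monotonicity $\ell^{q_1}\subset\ell^{q_2}$ for $q_1\le q_2$ gives the outer chain $B^l_{p,1}\hookrightarrow W^l_p\hookrightarrow B^l_{p,\infty}$ for every $p\in[1,\infty]$, which is the third line. For the two sandwich lines, Minkowski's integral inequality applied to the mixed $L^p(\ell^q)$ quantity shows that in the range $p\le 2$ one may interchange $L^p$ with $\ell^2$; combined with the pointwise $\ell^p\subset\ell^2$, this produces $B^l_{p,p}\hookrightarrow F^l_{p,2}=W^l_p\hookrightarrow B^l_{p,2}$. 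The case $p\ge 2$ is dual: the opposite sense of Minkowski together with $\ell^2\subset\ell^p$ yields the reverse $B^l_{p,2}\hookrightarrow W^l_p\hookrightarrow B^l_{p,p}$.

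The main obstacle is purely technical: verifying that the finite-difference based norms used in Definition \ref{d7.2} are equivalent to the Littlewood--Paley description on the horn domain. This equivalence is classical but requires some care at the endpoints $p=1$ and $p=\infty$, where $W^l_p$ does not literally coincide with $F^l_{p,2}$; there one has to argue directly from the modulus of continuity $\omega^m_p(h,u)$, exploiting that $h^{-l}\omega^m_p(h,u)$ belongs to $L^1(\R_+;\d h/h)$ precisely when $u\in B^l_{p,1}$ and to $L^\infty(\R_+)$ precisely when $u\in B^l_{p,\infty}$. Because the statement is explicitly attributed to Besov--Il'in--Nikolskii, Ch.~4, Sect.~18, in practice I would simply cite that reference rather than reproduce the argument in detail.
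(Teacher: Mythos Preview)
The paper does not prove this lemma at all; it is stated with an explicit citation to \cite[Ch.~4, Th.~18.9]{BIN} and used as a black box. Your sketch via extension to $\R^n$, Littlewood--Paley description, the identification $W^l_p=F^l_{p,2}$ for $1<p<\infty$, and the Minkowski/$\ell^q$--monotonicity comparisons is the standard route and is correct in outline. Your own closing remark---that in practice one simply cites the reference---is exactly what the paper does, so there is no discrepancy to reconcile.
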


Consider the nonstationary Stokes system in $\Omega\subset\R^3$:
\[
\begin{split}
v_t-\nu\Delta v+\nabla p&=f,\\
\div\, v&=0
\end{split}
\]
with the boundary conditions \eqref{1.2} and given initial condition $v(0)$.

\begin{lemma}[see \cite{MS}]\label{lemma 7.8}
Assume that $f\in L_{q,r}(\Omega^T)$, $v(0)\in B_{q,r}^{2-2/r}(\Omega)$, $r,q\in(1,\infty)$. Then there exists a unique solution to the above system such that $v\in W_{q,r}^{2,1}(\Omega^T)$, $\nabla p\in L_{q,r}(\Omega^T)$ with the following estimate in a Sobolev spaces with a mixed norm
\begin{equation}
\|v\|_{W_{q,r}^{2,1}(\Omega^T)}+\|\nabla p\|_{L_{q,r}(\Omega^T)}\le c(\|f\|_{L_{q,r}(\Omega^T)}+\|v(0)\|_{B_{r,q}^{2-2/r}(\Omega)}).
\label{7.2}
\end{equation}
\end{lemma}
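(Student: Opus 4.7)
The plan is to follow the classical maximal regularity strategy for the Stokes system, adapted to the mixed-norm setting. First, I would reduce to the case of zero initial data. Given $v(0)\in B^{2-2/r}_{q,r}(\Omega)$, the inverse trace theorem (Lemma \ref{lemma 7.4}(ii)) produces an extension $\tilde v\in W^{2,1}_{q,r}(\Omega^T)$ with $\tilde v|_{t=0}=v(0)$, satisfying $\|\tilde v\|_{W^{2,1}_{q,r}(\Omega^T)}\les c\|v(0)\|_{B^{2-2/r}_{q,r}(\Omega)}$. One can arrange the extension so that $\divv\tilde v=0$ and $\tilde v$ satisfies the boundary conditions \rf{1.3}$_{1,2}$ (by using the Bogovskii operator to correct the divergence if needed). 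Writing $w=v-\tilde v$, the unknown $w$ solves the Stokes system with zero initial data and a modified forcing $\tilde f:=f-\tilde v_t+\nu\Delta\tilde v\in L_{q,r}(\Omega^T)$.

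Next, I would apply the Helmholtz decomposition associated with the boundary conditions \rf{1.3}$_{1,2}$ to split $\tilde f=P\tilde f+\nabla\pi_0$, where $P$ is the projection onto the solenoidal subspace of $L_q(\Omega)$ whose elements have vanishing normal trace on $S$. This decouples the problem: $w$ satisfies the abstract Cauchy problem $w_t+\nu A_q w=P\tilde f$, $w(0)=0$, where $A_q=-P\Delta$ is the Stokes operator equipped with the boundary conditions \rf{1.3}$_{1,2}$. The core analytic input is that $-A_q$ generates a bounded analytic semigroup on the solenoidal $L_q$-space and enjoys maximal $L^r$-regularity; this is precisely what is proven in \cite{MS} via resolvent estimates of Agmon--Douglis--Nirenberg type together with the Mikhlin--Weis operator-valued Fourier multiplier theorem. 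This gives
\[
\|w_t\|_{L_{q,r}(\Omega^T)}+\|\nu A_q w\|_{L_{q,r}(\Omega^T)}\les c\|P\tilde f\|_{L_{q,r}(\Omega^T)},
\]
and elliptic regularity for the stationary Stokes problem converts $\|A_q w\|_{L_q}$ into $\|\nb^2 w\|_{L_q}$, yielding the full $W^{2,1}_{q,r}$ bound on $w$.

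Finally, the pressure is recovered a posteriori from $\nabla p=\tilde f+\nabla\pi_0-w_t+\nu\Delta w$, and the $L_{q,r}$-bound follows because every term on the right is already controlled in that norm. Undoing the reduction $v=w+\tilde v$ and combining the estimates gives \rf{7.2}. Uniqueness is standard: the difference of two solutions satisfies the homogeneous system with zero data, and the maximal regularity estimate forces it to vanish.

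The main obstacle is the joint verification of maximal regularity and the Helmholtz decomposition on the cylinder $\Omega$ with the mixed boundary conditions \rf{1.3}$_{1,2}$, which combine no-slip on $S_1$ with slip (no-penetration plus vanishing tangential vorticity) on $S_2$. The non-smooth edges $\overline{S_1}\cap\overline{S_2}$ preclude direct application of the classical smooth-boundary theory; one must either work with axially-symmetric function spaces in which the edges become interior points after passing to the meridional half-plane, or invoke the adapted Solonnikov-type estimates of \cite{MS} tailored to precisely this geometry. Once these building blocks are in place, the rest of the argument is a standard concatenation.
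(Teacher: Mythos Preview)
The paper does not give its own proof of this lemma; it simply quotes the result from \cite{MS} and uses it as a black box. Your sketch is a reasonable outline of the standard maximal-regularity argument and is in the spirit of what Maremonti--Solonnikov actually do, so there is nothing to compare against in the paper itself.

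One remark: you correctly flag the edges $\overline{S_1}\cap\overline{S_2}$ and the mixed boundary conditions as the real issue, but you should be aware that \cite{MS} does not treat precisely this boundary-value problem; the authors of the present paper are tacitly assuming that the Maremonti--Solonnikov estimates extend to the Ladyzhenskaya-type conditions \rf{1.3} on the cylinder. Your suggestion of passing to the meridional half-plane in axially-symmetric spaces is the natural way to justify this, but it is not carried out here or in \cite{MS}, so the lemma as stated is really an invocation rather than a proved fact in this setting.
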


\begin{proof}
Proof of Theorem \ref{th1.3}

Let us recall our problem

\begin{equation}\lb{7.3}
    \begin{array}{ll}
v_{, t}-\nu  \Delta v+\nabla  {p}=-v'\cdot \nb v+\alpha(\theta) f & \text { in } \Omega^{T}, \\
\operatorname{div} v=0 & \text { in } \Omega^{T}, \\
v_{r}=v_{\varphi}=\omega_{\varphi}=0 & \text { in } S_{1}^{T}, \\
v_{z}=\omega_{\vp}=v_{\varphi, z}=0 & \text { on } S_{2}^T, \\
v|_{t=0}=v(0) \equiv v_{0} & \text { in } \Omega,
\end{array}
\end{equation}
where $v^{\prime}=\left(v_{r}, v_{z}\right)$ and
\begin{equation}\lb{7.4}
 \begin{aligned}
& \theta_{,t}-\kappa \Delta \theta=-v' \cdot \nb \theta+g &&\text { in } \Omega^{T}\\
& \bar{n} \cdot \nabla \theta=0 &&\text { on } S_{1}^{T} \\
& \theta |_{t=0}=\theta(0) \equiv \theta_{0}&&\text { in } \Omega
\end{aligned}
\end{equation}
From \rf{1.26} we have
\begin{equation}\lb{7.5}
    \|\Phi\|_{V\left(\Omega^{t}\right)}+\|\Gamma\|_{V(\Ot))} \leqslant \phi\left(D_{1}, \cdots, D_{12}\right) \equiv \phi_{1}.
\end{equation}
Then Lemma \ref{l3.1} gives
\begin{equation}\lb{7.6}
    \left\|\psi_{1}\right\|_{2,\iy ,\Ot, } \leqslant c\|\Gamma\|_{V(\Ot)} \leqslant c \phi_{1} .
\end{equation}
From \rf{1.18} the following relations hold
\begin{equation}\lb{7.7}
   v _{r}=-r \psi_{1,z},\; v_{z}=2\psi_{1}+r \psi_{1, r}
\end{equation}
Hence, \rf{7.6},\rf{7.7}  and finite $R$ yield
\begin{equation}
    \left\|v_{r}\right\|_{1,2,\iy,\Ot}+ \left\|v_{z}\right\|_{1,2,\iy,\Ot} \leqslant c \phi_{1}
\end{equation}
The above inequality implies
\begin{equation}\lb{7.9}
    \lp{v'}{6,\iy,\Ot}{}\le c\phi_1
\end{equation}
The following energy type estimate for solutions to \rf{7.3} and \rf{7.4}  holds
\begin{equation}\lb{7.10}
    \hp{v}{1,2,\Ot}{}+ \hp{\t}{1,2,\Ot}{}\le d_1\equiv d_1(D_0,D_1)
\end{equation}
Estimates \rf{7.9} and \rf{7.10} imply
\begin{equation}\lb{7.11}
    |v'\cdot\nb v|_{3/2,2,\Ot}+ |v'\cdot\nb \t|_{3/2,2,\Ot}\le\phi_1d_1
\end{equation}
In view of \rf{7.11} and Lemma \ref{lemma 7.8}, we obtain 
\begin{equation}\lb{7.12}
    \begin{aligned}
& \|v\|_{W_{3/2,2}^{2,1}(\Ot)}+\|\t\|_{W_{3/2,2}^{2,1}(\Ot)}+|\nb p|_{3 / 2,2, \Omega^{t}} \\
&\leq c\left(\alpha (\theta_{*}, \t^{*}\right)
 |f|_{3 / 2, \Ot}+|g|_{3 / 2,2, \Ot}+\left\|v_{0}\right\|_{B_{3 / 2, 2}^1(\Omega)}\\
 &+\left\|\t_0\right\|_{B_{3 / 2, 2}^1(\Omega)} 
+\phi_{1} d_{1}) \equiv d_{2} .
\end{aligned}
\end{equation}
In view of the imbeddings (see [BIN, Ch. 3 , Sect.10])
\begin{equation}\lb{7.13}
    \begin{aligned}
& |\nabla v|_{5 / 2, \Omega^{t}} \leq c\|v\|_{W_{\frac{3}{2},{2}}^{2,1}(\Omega^{t})} \\
& |\nabla \t|_{5 / 2, \Omega^{t}} \leq c\|\t\|_{W_{\frac{3}{2},{2}}^{2,1}(\Omega^{t})} 
\end{aligned}
\end{equation}
and \rf{7.9}  we derive that
\begin{equation}\lb{7.14}
    |v'\cdot \nabla v|_{\frac{30}{17}, \frac{5}{2}, \Omega t}+\left|v' \cdot \nabla \theta\right|_{\frac{30}{17}, \frac{5}{2}, \Ot} \leq c \phi_{1} d_{2}.
\end{equation}
Applying again Lemma \ref{lemma 7.8} to problems \rf{7.3} and \rf{7.4} yields
\begin{equation}\lb{7.15}
\begin{aligned}
&\hp{v}{W_{\fr{30}{17},\fr52}^{2,1}(\Ot)}{}+\hp{\t}{W_{\fr{30}{17},\fr52}^{2,1}(\Ot)}{}+\lp{\nb p}{\fr{30}{17},\Ot}{}\\
&\leq c\left(\alpha (\theta_{*}, \t^{*}\right)
 |f|_{\fr{30}{17},\fr52, \Ot}+|g|_{\fr{30}{17},\fr52, \Ot}\\
 &+\left\|v_{0}\right\|_{B_{\fr{30}{17},\fr52}^{2-\fr45}(\Omega)}+\left\|\t_{0}\right\|_{B_{\fr{30}{17},\fr52}^{2-\fr45}(\Omega)} 
+\phi_{1} d_{2}) \equiv d_{3} .
\end{aligned}
\end{equation}
In view of the imbeddings (see [BIN, Ch. 3, Sect .10])
\begin{equation}\lb{7.16}
    \begin{aligned}
& |\nb v|_{\frac{10}{3}, \O t} \leq c\hp{v}{W_{\fr{30}{17},\fr52}^{2,1}(\Ot)}{}, \\
& |\nb \t|_{\frac{10}{3}, \O t} \leq c\hp{\t}{W_{\fr{30}{17},\fr52}^{2,1}(\Ot)}{}
\end{aligned}
\end{equation}
and \rf{7.9}, we have
\begin{equation}\lb{7.17}
    \left|v'\cdot \nb v\right|_{\frac{15}{7}, \frac{10}{3}, \Omega^{t}}+\left|v'\cdot \nb \t\right|_{\frac{15}{7}, \frac{10}{3}, \Omega^{t}} \leqslant c \phi_{1} d_{3}.
\end{equation}
Applying Lemma \ref{lemma 7.8} to problems \rf{7.3}, \rf{7.4} and using \rf{7.17} imply
\begin{equation}\lb{7.18}
\begin{aligned}
&\hp{v}{W_{\fr{15}{7},\fr{10}3}^{2,1}(\Ot)}{}+\hp{\t}{W_{\fr{15}{7},\fr{10}3}^{2,1}(\Ot)}{}+\lp{\nb p}{\fr{15}{7}\fr{10}3,\Ot}{}\\
&\leq c\left(\alpha (\theta_{*}, \t^{*}\right)
 \lp{f}{\fr{15}{7},\fr{10}3,\Ot}{}+\lp{f}{\fr{15}{7},\fr{10}3,\Ot}{}\\
 &+\left\|v_{0}\right\|_{B_{\fr{15}{7},\fr{10}{3}}^{2-\fr6{10}}(\Omega)}+\left\|\t_{0}\right\|_{B_{\fr{15}{7},\fr{10}{3}}^{2-\fr6{10}}(\Omega)}
+\phi_{1} d_{3}) \equiv d_{4} .
\end{aligned}
\end{equation}
Lemma \ref{lemma 7.4}  gives
\begin{equation}\lb{7.19}
    \begin{aligned}
         &\left\|v\right\|_{L_\iy (0,t;B_{\fr{15}{7},\fr{10}{3}}^{2-\fr6{10}}(\Omega))}\le c \hp{v}{W_{\fr{15}{7},\fr{10}3}^{2,1}(\Ot)}{},\\
         &\left\|\t \right\|_{L_\iy (0,t;B_{\fr{15}{7},\fr{10}{3}}^{2-\fr6{10}}(\Omega))}\le c \hp{\t}{W_{\fr{15}{7},\fr{10}3}^{2,1}(\Ot)}{}.
    \end{aligned}
\end{equation}
Theorem 18.10 from \cite{BIN} gives
\begin{equation}\lb{7.20}
    |v(t)|_{q, \Omega} \leqslant c\|v\|_{B_{\frac{15}{7}, \frac{10}{3}}^{7 /5}(\Omega)}
\end{equation}
which holds for any finite $q$ satisfying the relation $7 / 5 \geqslant 7 / 5-3 / q$.
Next, we use the imbeddings (see [BIN, Ch, 3, Sect .10])
\begin{equation}\lb{7.21}
    \begin{aligned}
& |\nabla v|_{5, \Omega^{t}} \leq c\|v\|_{W_{\frac{15}{7}, \frac{10}{3}}^{2,1}\left(\Omega^{t}\right)}, \\
& |\nabla \t|_{5, \Omega^{t}} \leq c\|\t\|_{W_{\frac{15}{7}, \frac{10}{3}}^{2,1}\left(\Omega^{t}\right)} .
\end{aligned}
\end{equation}
Estimates (7.20) and (7.21) imply
\begin{equation}\lb{7.22}
    \begin{aligned}
& \left|v' \cdot \nb v\right|_{5', \Ot} \leq c d_{4}^{2}, \\
& \left|v' \cdot \nb \t\right|_{5', \Ot} \leq c d_{4}^{2}.
\end{aligned}
\end{equation}
where $5'<5$ but it is arbitrary close to 5. In view of \rf{7.22} and Lemma \ref{lemma 7.8} we have
\begin{equation}\lb{7.23}
    \begin{aligned}
& \|v\|_{W_{5'}^{2,1}\left(\Omega^{t}\right)}+\|\theta\|_{W_{5'}^{2,1}\left(\Omega^{t}\right)}+|\nabla p|_{5', \Omega^{t}} \\
& \leq c\left(\al\left(\theta_{*}, \theta^{*}\right)|f|_{5', \Omega^{t}}+|g|_{5', \Omega^{t}}+\left\|v_{0}\right\|_{W_{5'}^{2-2 / 5'}(\O)}\right. \\
& \left.+\left\|\theta_{0}\right\|_{W_{5'}^{2-2 / 5'}(\O)}+d_{4}^{2}\right) \equiv d_{5} .
\end{aligned}
\end{equation}
From \rf{7.23} it follows that $v, \theta \in L_{\infty }\left(\Omega^{t}\right)$
$\nb v, \nb \t  \in L_{q}\left(\Omega^{t}\right)$ for any finite $q$. Then
\begin{equation}\lb{7.24}
    \begin{aligned}
    &|\nabla(v'\cdot \nb v)|_{5', \Ot} &&\leq c d_{5}^{2},\\
     &|\nabla(v'\cdot \nb \t)|_{5', \Ot} &&\leq c d_{5}^{2},\\
& \left|\partial_{t}^{1 / 2}(v' \cdot \nabla v)\right|_{5, \Omega t} &&\leq c d_{5}^{2},\\
& \left|\partial_{t}^{1 / 2}(v' \cdot \nabla \t)\right|_{5, \Omega t}&& \leq c d_{5}^{2},
\end{aligned}
\end{equation}
where $\partial_{t}^{1 / 2}$ means the partial derivative. 
Estimates \rf{7.24} and Lemma \ref{lemma 7.8} imply
\begin{equation}\lb{7.25}
    \begin{aligned}
 & \|v\|_{W_{10 / 3}^{3,3 / 2} (\Ot)}+\|\theta\|_{W_{10 / 3}^{3,3 / 2} (\Ot)}\\
&\leq  c \bigg(\phi ( \theta _ { * },\theta ^ { * } ) (\|\theta\|_{W_{10 / 3}^{1,1 / 2}\left(\Omega^{t}\right)}+\|f\|_{W_{10 / 3}^{1,1 / 2}(\Ot)} )\\
&+ \|g\|_{W_{10 / 3}^{1,1 / 2}(\Ot)}+\left\|v_{0}\right\|_{W_{10 / 3}^{12/5}(\Omega)}+\left\|\theta_{0}\right\|_{W_{10 / 3}^{12 / 5}(\Omega)} + d_{5}^{2}\bigg).
\end{aligned}
\end{equation}

Applying the interpolation to eliminate the norm $\|\theta\|_{W_{10 / 3}^{1,1 / 2}\left(\Omega^{t}\right)}$ from the r.h.s. of \rf{7.25} and using \rf{2.8} we obtain
\begin{multline}\lb{7.26}
 \|v\|_{W_{10 / 3}^{3,3 / 2} (\Ot)}+\|\theta\|_{W_{10 / 3}^{3,3 / 2} (\Ot)} \leq \phi\bigg(D_{0}, \phi\left(\theta_{*}, \theta^{*}\right),\\
\|f\|_{W_{10 / 3}^{1,1 / 2}(\Ot)} ,
\|g\|_{W_{10 / 3}^{1,1 / 2}(\Ot)},\left\|v_{0}\right\|_{W_{10 / 3}^{12/5}(\Omega)},\left\|\theta_{0}\right\|_{W_{10 / 3}^{12 / 5}(\Omega)} ,d_{5}\bigg)\equiv d_6.
\end{multline}

Continuing the considerations yields
\begin{multline}
\|v\|_{W_{2}^{4,2}(\Omega^{t}}+\|\theta\|_{W_{2}^{4} 2}\left(\Omega^{t}\right) \leq \phi\bigg(\|f\|_{W_{2}^{2, 1}(\Omega^{t}}, \\
\|g\|_{W_{2}^{2, 1}(\Omega^{t})},\left\|v_{0}\right\|_{H^{3}(\Omega)}\left\|\theta_{0}\right\|_{H^{3}(\Omega)},d_6,D_0\bigg).
\end{multline}

This ends the prof.
\end{proof}

\section*{Conflict of interest statement}
The authors report there are no competing interests to declare.

\section*{Data availability statement}
The authors report that there is no data associated with this work.

\bibliographystyle{amsplain}
\begin{thebibliography}{99}

\bibitem[BIN]{BIN} Besov, O.V.; Il'in, V.P.; Nikolskii, S.M.: Integral Representations of Functions and Imbedding Theorems, Nauka, Moscow 1975 (in Russian); English transl. vol. I. Scripta Series in Mathematics. V.H. Winston, New York (1978).

\bibitem[B]{B} Bugrov, Ya.S.: Function spaces with mixed norm, Izv. AN SSSR, Ser. Mat. 35 (1971), 1137--1158 (in Russian); English transl.: Math. USSR -- Izv., 5 (1971), 1145--1167.

\bibitem[CKN]{CKN} Caffarelli, L.; Kohn, R.V.; Nirenberg, L.: Partial regularity of suitable weak solutions of the Navier-Stokes equations, Comm. Pure Appl. Math. 35 (1982), 771--831.

\bibitem[CFZ]{CFZ} Chen, H.; Fang, D.; Zhang, T.: Regularity of 3d axisymmetric Navier-Stokes equations, Disc. Cont. Dyn. Syst. 37 (4) (2017), 1923--1939.

\bibitem[G]{G} Golovkin, K.K.: On equivalent norms for fractional spaces, Trudy Mat. Inst. Steklov 66 (1962), 364--383 (in Russian); English transl.: Amer. Math. Soc. Transl. 81 (2) (1969), 257--280.

\bibitem[L]{L} Ladyzhenskaya, O.A.: Unique global solvability of the three-dimensional Cauchy problem for
the Navier-Stokes equations in the presence of axial symmetry. Zap. Naučn. Sem. Leningrad.
Otdel. Mat. Inst. Steklov. (LOMI), 7: 155–177, 1968. English transl., Sem. Math. V.A.
Steklov Math. Inst. Leningrad, 7:70–79, 1970.

\bibitem[KP]{KP} Kreml, O.; Pokorny, M.: A regularity criterion for the angular velocity component in axisymmetric Navier-Stokes equations, Electronic J. Diff. Eq. vol 2007 (2007), No. 08, pp.1--10.

\bibitem[LW]{LW} Liu, J.G.; Wang, W.C.: Characterization and regularity for axisymmetric solenoidal vector fields with application to Navier-Stokes equations, SIAM J. Math. Anal. 41 (2009), 1825--1850.

\bibitem[MS]{MS} Maremonti, P.; Solonnikov, V.A.: On the estimates of solutions of evolution Stokes problem in anisotropic Sobolev spaces with mixed norm, Zap. Nauchn. Sem. LOMI 223 (1994), 124--150.

\bibitem[NZ]{NZ} Nowakowski, B.; Zaj\c{a}czkowski, W.M.: On weighted estimates for the stream function of axially symmetric solutions to the Navier-Stokes equations in a bounded cylinder, doi:10.48550/arXiv.2210.15729. Appl. Math. 50.2 (2023), 123--148, doi: 10.4064/am2488-1-2024.

\bibitem[NZ1]{NZ1} Nowakowski, B,; Zaj\c{a}czkowski, W.M.: Global regular axially-symmetric solutions to the Navier-Stokes equations with small swirl, J. Math. Fluid Mech. (2023), 25:73.

\bibitem[NP1]{NP1} Neustupa, J.; Pokorny, M.: An interior regularity criterion for an axially symmetric suitable weak solutions to the Navier-Stokes equations, J. Math. Fluid Mech. 2 (2000), 381--399.

\bibitem[NP2]{NP2} Neustupa, J.; Pokorny, M.: Axisymmetric flow of Navier-Stokes fluid in the whole space with non-zero angular velocity component, Math. Bohemica 126 (2001), 469--481.

\bibitem[OP]{OP} O\.za\'nski, W. S.; Palasek, S.: Quantitative control of solutions to the axisymmetric Navier-Stokes equations in terms of the weak $L^3$ norm, Ann. PDE 9:15 (2023), 1--52.

\bibitem[T]{T} Triebel, H.: Interpolation Theory, Functions Spaces, Differential Operators, North-Holand, Amsterdam (1978).

\bibitem[Z1]{Z1} Zaj\c{a}czkowski, W.M.: Global regular axially symmetric solutions to the Navier-Stokes equations. Part 1, Mathematics 2023, 11(23), 4731, https//doi.org/10.3390/math11234731; also available at arXiv.2304.00856.

\bibitem[Z2]{Z2} Zaj\c{a}czkowski, W.M.: Global regular axially symmetric solutions to the Navier-Stokes equations. Part 2, Mathematics 2024, 12(2), 263, https//doi.org/10.3390/math12020263.

\bibitem[OZ]{OZ} O\.za\'nski, W.S.; Zaj\c{a}czkowski W.M.: On the regularity of axially symmetric solutions to the incompressible Navier-Stokes equationsin a cylinder.

\end {thebibliography}

\end{document}